\title{Expansive dynamics on profinite groups}
\author{Michael Wibmer}
\address{Michael Wibmer, Institute of Analysis and Number Therory, Graz University of Technology, Kopernikusgasse~24, 8010 Graz, Austria, \url{https://sites.google.com/view/wibmer}}
\email{wibmer@math.tugraz.at}
\thanks{This work was supported by the NSF grants DMS-1760212, DMS-1760413, DMS-1760448 and the Lise Meitner grant M 2582-N32 of the Austrian Science Fund FWF}
\subjclass[2010]{37B10, 37B05, 22C05, 20E18, 12H10}
\keywords{Symbolic dynamics, group shift, Markov subgroup, expansive automorphism, expansive dynamical system, Babbitt's decomposition}
\date{\today}
\newtheorem{theo}{Theorem}[section]
\newtheorem{lemma}[theo]{Lemma}
\newtheorem{prop}[theo]{Proposition}
\newtheorem{cor}[theo]{Corollary}
\newtheorem{defi}[theo]{Definition}
\newtheorem{rem}[theo]{Remark}
\newtheorem*{theononumber}{Theorem A}
\theoremstyle{definition}
\newtheorem{ex}[theo]{Example}
\newcommand{\f}{\phi}
\newcommand{\Z}{\mathbb{Z}}
\newcommand{\N}{\mathbb{N}}
\newcommand{\G}{\mathcal{G}}
\newcommand{\id}{\operatorname{id}}
\newcommand{\s}{\sigma}
\newcommand{\ld}{\operatorname{ld}}
\newcommand{\nn}{\mathbb{N}}
\def\H{\mathcal{H}}
\renewcommand{\sc}{{\sigma o}}
\newcommand{\Gm}{\mathbb{G}_m}
\begin{document}

\maketitle


\begin{abstract}
A profinite group equipped with an expansive endomorphism is equivalent to a one-sided group shift. We show that these groups have a very restricted structure. More precisely, we show that any such group can be decomposed into a finite sequence of full one-sided group shifts and two finite groups. 
\end{abstract}

\section{Introduction} 

An endomorphism $\s\colon G\to G$ of a profinite group $G$ is \emph{expansive} if there exists an open subgroup $U$ of $G$ with  
$\bigcap_{n\in \N}\s^{-n}(U)=1$. There are two obvious examples: A (discrete) finite group with an arbitrary endomorphism (choose $U=1$) and a full one-sided group shift on a finite group $\G$, i.e., $G=\G^\N$ with $\s(g_0,g_1,\ldots)=(g_1,g_2,\ldots)$ (choose $U=1\times\G\times \G\times\ldots$). Our main result shows that any profinite group with an expansive endomorphism is build up from these two examples. More precisely, we have (Theorem~\ref{theo: main}):

\begin{theononumber} Let $G$ be a profinite group equipped with an expansive endomorphism $\s$. Then there exists a subnormal series
$$G\supseteq G_1\supseteq G_2\supseteq\ldots\supseteq G_n$$
of closed $\s$-stable subgroups $G_i$ of $G$ such that
\begin{itemize}
	\item $G_i/G_{i+1}$ is isomorphic to a full one-sided group shift on a finite simple group $\G_i$ for $i=1,\ldots,n-1$,
	\item  $G/G_1$ is a finite group and $\s\colon G/G_1\to G/G_1$ is an automorphism,
	\item $G_n$ is a finite group and some power of $\s\colon G_n\to G_n$ is the trivial endomorphism $g\mapsto 1$.
\end{itemize} 		
Moreover, the length $n$ of such a series, the group $G_1$ and the isomorphism classes of the finite simple groups $\G_i$ are uniquely determined by $G$. 
%
%
%
\end{theononumber}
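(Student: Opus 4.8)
The plan is to first invoke the equivalence established above between profinite groups with an expansive endomorphism and one-sided group shifts, so that we may assume $G$ is a closed $\s$-stable subgroup of $\G^\N$ for some finite group $\G$, with $\s$ the restriction of the shift. Two elementary facts will be used throughout: expansiveness passes to every closed $\s$-stable subgroup (intersect the witnessing open $U$ with the subgroup) and to every quotient by a closed $\s$-stable normal subgroup. The strategy is then to build the series from the two ends inward, in the spirit of Babbitt's decomposition: first split off the finite automorphism quotient $G/G_1$ at the top, then peel full simple shifts off the remaining $G_1$ until only a finite, $\s$-nilpotent group $G_n$ is left.

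For the top layer the key input is the statement that an expansive \emph{automorphism} of a profinite group has finite domain; this is where one-sidedness of the expansiveness condition is essential (the two-sided shift is an automorphism but is not one-sidedly expansive). Granting this, I would let $G_1$ be the intersection of all closed $\s$-stable normal subgroups $N\trianglelefteq G$ for which $G/N$ is finite and $\s$ acts on $G/N$ as an automorphism. This family is closed under finite intersections (a subdirect product of finite automorphism quotients is $\s$-stable, and an injective endomorphism of a finite group is bijective), so $G/G_1=\varprojlim G/N$ carries an inverse limit of automorphisms, hence an automorphism; being also expansive, it is finite. Thus $G_1$ itself lies in the family and is its least member, which simultaneously establishes the required property of $G/G_1$ and makes $G_1$ canonical, giving the asserted uniqueness of $G_1$.

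The heart of the argument is the decomposition of $G_1$. Here I would show that an infinite expansive group shift admitting no nontrivial finite automorphism quotient possesses a closed $\s$-stable normal subgroup $G_2$ with $G_1/G_2$ isomorphic to a full one-sided shift $\G_1^\N$ on a finite \emph{simple} group $\G_1$: one projects $G_1\subseteq\G^\N$ to a single coordinate, chooses a composition factor $\G_1$ of the finite image group, and propagates this choice $\s$-equivariantly along all coordinates to obtain a surjective $\s$-equivariant map $G_1\sar\G_1^\N$ onto the \emph{full} shift. Proving \emph{fullness} and \emph{simplicity} of this layer — that the resulting group-shift quotient has no forbidden transitions and that the absence of automorphism quotients forces the chosen composition factor to appear freely — is the main obstacle, since a general Markov subgroup of $\G_1^\N$ need not be full; this is exactly where the structure theory of group shifts together with surjectivity of $\s$ on the stable image must be used. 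Iterating on $G_2$, each step removes one unit from a finiteness invariant (the ``rank'' of the stable image $\bigcap_m\s^m(\cdot)$), so the process terminates at a finite group $G_n$ whose stable image is trivial, i.e.\ on which some power of $\s$ is the trivial endomorphism.

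Finally, for uniqueness of the length $n$ and of the multiset of isomorphism classes $\{\G_i\}$, I would exhibit a Jordan--H\"older type invariant. The subgroups $\ker(\s^m)\subseteq G$ are finite, and as $m\to\infty$ the multiplicity of a fixed finite simple group $\G$ among the composition factors of $\ker(\s^m)$ grows linearly in $m$, with slope equal to the number of indices $i$ for which $\G_i\cong\G$; the finite top and bottom layers contribute only a bounded amount and so do not affect the slope (one checks this directly on $\G_1^\N$, where $\ker(\s^m)\cong\G_1^m$). This recovers each multiplicity, hence the whole multiset and the length $n$, independently of the chosen series. I expect the two genuinely hard points to be the existence of the full-simple-shift layer (fullness together with simplicity) and the verification that this linear-growth invariant is well defined and agrees with any series; the finite end-layers and the reductions are comparatively routine.
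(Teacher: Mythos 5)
Your outline has the right skeleton (reduce to group shifts, split a finite automorphism quotient off the top, decompose the remainder into full simple shifts over a finite $\s$-nilpotent bottom), but the steps you explicitly defer as ``the main obstacle'' and the ``genuinely hard points'' are precisely the mathematical content of the theorem, and the peeling strategy as you describe it would actually fail. The difficulty is that producing \emph{some} surjection $G_1\sar\G_1^\N$ onto a full simple shift gives no control over its kernel. Concretely, take $G_1=C_2^\N$, the full shift on the group $C_2$ of order two, and let $N=\{(c,c,c,\ldots)\mid c\in C_2\}$ be the diagonal. Then $N$ is a closed $\s$-stable normal subgroup and $G_1/N$ is again isomorphic to the full shift $C_2^\N$ (Lemma \ref{lemma: normal subgroup of shift on simple group}), yet the kernel $N\simeq C_2$ carries $\s=\id$: no power of $\s$ is trivial on it, so after this peel your iteration is stuck with a finite bottom group violating the third bullet, and it cannot be repaired by further refinement. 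Nothing in the recipe ``project to a coordinate, choose a composition factor, propagate'' excludes such choices, and fullness of the image is exactly what you leave unproved. The paper's way around both problems is to prove a sharper statement (Proposition \ref{prop: if no normal then benign} and Corollary \ref{cor: if no normal then benign}): for an infinite $\s$-connected group with no normal expansive subgroup of intermediate limit degree, the quotient $G/\ker(\s^\ell)$ for a suitable $\ell$ is a full simple shift --- the kernel is a kernel of a power of $\s$, hence automatically $\s$-infinitesimal --- and the induction is run on the limit degree (a well-defined integer invariant, unlike your unexplained ``rank of the stable image''), closing by means of the exchange Lemma \ref{lemma: exchange sinfinitesimal for Babbitt}. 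Your proposal contains no substitute for this argument. (Also, expansiveness of quotients, which you call elementary, in fact requires the finite-type theorem for group shifts; see Lemma \ref{lemma: expansive subgroups and quotients}.)

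The uniqueness claims are likewise incomplete. Your canonical $G_1$, the least member of the family of kernels of finite automorphism quotients, does coincide with the paper's $\s$-identity component $G^\sc$ (Lemma \ref{lemma: sconnected converse}), but your argument yields only one inclusion: the top term $G_1'$ of an arbitrary series as in the theorem lies in your family, hence \emph{contains} the least member. Equality requires showing that a group built as an iterated extension of full shifts over a finite group with nilpotent $\s$ admits no proper finite automorphism quotient; this is the content of Lemma \ref{lemma: sconnected in short exact sequence} together with Remark \ref{rem: explain theoremA}, and it is absent from your sketch. On the other hand, your linear-growth invariant for the multiset $\{\G_i\}$ --- the slope in $m$ of the multiplicity of a fixed finite simple group among the composition factors of the finite groups $\ker(\s^m)$ --- is a genuinely different and attractive alternative to the paper's Schreier-refinement (Jordan--H\"{o}lder style) argument, and I believe it can be completed: across a full-shift layer one uses surjectivity of $\s$ to compare kernels, and at the $\s$-infinitesimal bottom one passes to $\ker(\s^{m+r})$ for a bounded $r$. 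But by your own account this verification is not carried out, so as it stands the uniqueness of $n$ and of the $\G_i$ is also unproven.
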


A continuous map $\s\colon X\to X$ on a metric space $(X,d)$ is \emph{expansive} if there exists an $\varepsilon >0$ such that for any two distinct points $x,y\in X$ there is an $n\in\N$ with $d(\s^n(x),\s^n(y))>\varepsilon$. 
In the context of a continuous group homomorphism $\s\colon G\to G$ on a topological group $G$, this condition translates to the existence of a neighborhood $U$ of $1$ such that for any two distinct $g,h\in G$ there is an $n\in\N$ with $\s^n(g)\notin \s^n(h)U$, or equivalently, $h^{-1}g\notin\s^{-n}(U)$. Thus, $\s$ is expansive if and only if there is a neighborhood $U$ of $1$ with $\bigcap_{n\in \N}\s^{-n}(U)=1$. Similarly, an \emph{automorphism} $\s$ of a topological group is an \emph{expansive automorphism} if there exists a neighborhood $U$ of $1$ such that $\bigcap_{n\in \Z}\s^{-n}(U)=1$. 
 
 Expansive automorphisms of topological groups have been studied extensively under varying restrictions on the group (e.g., profinite, compact or locally compact). See
 \cite{Kitchens:ExpansiveDynamicsOnZeroDimensionalGroups}, \cite{Fagnani:SomeResultsOnTheClassificationOfExpansiveAutomorphismsOfCompactAbelianGroups},
  \cite{KitchensSchmidt:AutomorphismsOfCompactGroups},  \cite{BoyleSchrauder:ZdgroupshiftsAndBernoulliFactors}, \cite{GloecknerRaja:ExpansiveAutomorphismsOfTotallyDisconnectedLocallyCompactGroups}, \cite{Shah:ExpansiveAutomorphismsOnLocallyCompactGroups}, \cite[Chapter 3]{Schmidt:DynamicalSystemsOfAlgebraicOrigin} and the references given there.
 
A fundamental result, concerning expansive automorphisms of profinite groups, due to B. Kitchens (see \cite{Kitchens:ExpansiveDynamicsOnZeroDimensionalGroups} or \cite[Section 6.3]{Kitchens:SymbolicDynamics}), is that any such group is topologically conjugate to a direct product of a finite set equipped with a bijection and a full two-sided shift. See \cite{BoyleSchrauder:ZdgroupshiftsAndBernoulliFactors} for a discussion of higher dimensional analogs and \cite{Sobottka:TopologicalQuasiGroupShifts} for a generalization to quasi-groups.

In this article, we also establish a one-sided analog of Kitchens' result (Theorem \ref{theo: topological conjugacy}): If $G$ is a profinite group equipped with an expansive endomorphism $\s\colon G\to G$, then there exists an integer $n\geq 0$ such that $\s^n(G)$ is topologically conjugate to a finite set equipped with a bijection and a full one-sided shift.

Despite its beautiful simplicity, Kitchens' result is not fully satisfactory since the topological conjugacy in general does not respect the group structure. In fact, it appears that the problem, set forth by Kitchens in \cite{Kitchens:ExpansiveDynamicsOnZeroDimensionalGroups}, to classify all expansive automorphisms of profinite groups up to isomorphism is still wide open. 

We also establish a version of Theorem A for expansive automorphisms (Theorem \ref{theo: expansive auto}), i.e., a two-sided version. The statement is very similar. The only significant difference is that the group $G_n$ does not occur in the two-sided version. One can think of our two-sided version of Theorem~A as a variant of Kitchens' result that respects the group structure. It reduces the problem of classifying all expansive automorphisms of profinite groups to the study of the group extension problem for profinite groups with an expansive automorphism. This is somewhat similar to how the Jordan-H\"{o}lder theorem reduces the study of finite groups to the study of finite simple groups and the group extension problem for finite groups. Our proof of the uniqueness part of Theorem~A is actually modeled on the proof of the Jordan-H\"{o}lder theorem.   

There is no direct connection between profinite groups equipped with an expansive endomorphism and profinite groups equipped with an expansive automorphism. Indeed, if $G$ is a profinite group and $\s\colon G\to G$ is a map that is simultaneously an expansive endomorphism and an expansive automorphism, then $G$ must be finite (Corollary \ref{cor: s injective implies finite}). However, there is a universal construction $G\rightsquigarrow G^*$ that associates to any profinite group $G$ equipped with an expansive endomorphism, a profinite group $G^*$ equipped with an expansive automorphism. We use this universal construction to deduce the two-sided version of Theorem A from Theorem A.

We also present an application of Theorem A to difference algebra. Babbitt's decomposition theorem (\cite[Theorem 5.4.13]{Levin}) is an important classical theorem in difference algebra that elucidates the structure of Galois extensions of difference fields. Here a difference field is a field equipped with an endomorphism. The connection to expansive endomorphisms is that the Galois group of an extension of difference fields is naturally a profinite group equipped with an endomorphism. If the extension of difference fields is finitely generated, then the endomorphism on the Galois group is expansive. Indeed, one can think of Theorem A as a group version of Babbitt's decomposition theorem. Based on Theorem A, we present a new proof of Babbitt's decomposition theorem that yields additional information concerning the uniqueness of the decomposition (Theorem \ref{theo: Babbitt new}).   

A certain connection between difference algebra and symbolic dynamics, to be detailed in a forthcoming paper, was discovered by Ivan Toma\v{s}i\'{c}. While we do not use or even explain this connection here, this paper would not have happened without it and the author is grateful to Ivan Toma\v{s}i\'{c} for sharing his discovery. In the light of this connection, the results of this article could also be interpreted as results about a certain class of affine difference algebraic groups.

\medskip

We conclude this introduction with an overview of the article: In Section 2 we discuss one-sided group shifts and show that they are always of finite type. In Section 3 we first explain the equivalence of categories between the category of one-sided group shifts and the category of profinite groups equipped with an expansive endomorphism. We then study the latter category in more detail. In particular, we discuss quotients and analogs of the isomorphism theorems and the Schreier refinement theorem. We also introduce the $\s$-identity component that has properties somewhat similar to the identity component of an algebraic group. Section 4 contains the technical heart of the paper and establishes Theorem A. In Section 5 the one-sided analog of Kitchens' result is proved. Then the two-sided version of Theorem A is established in Section~6. Finally, Section 7 contains the application to Babbitt's decomposition theorem.

\section{One-sided group shifts}
\label{sec: onesided group shift}

In this section we provide some basic results concerning one-sided group shifts. In particular, we show that every one-sided group shift is of finite type. We begin by fixing the notation and recalling some basic definitions from symbolic dynamics. See \cite{Kitchens:SymbolicDynamics} or \cite{LindMarcus:IntroductionToSymbolicDynamisAndCoding}. As general conventions, the set $\N$ of natural numbers contains zero and a subset $Y$ of a set $X$ equipped with a map $\s\colon X\to X$ is \emph{$\s$-stable} if $\s(Y)\subseteq Y$.

Let $\mathcal{A}$ be a finite set. We consider $\mathcal{A}^\N$ as a topological space via the product topology of the discrete topology on $\mathcal{A}$. The topological space  $\mathcal{A}^\N$ together with the continuous map $\s\colon \mathcal{A}^\N\to  \mathcal{A}^\N$, given by $\s(a_0,a_1,\ldots)=(a_1,a_2,\ldots)$ is the \emph{full one-sided shift} on the alphabet $\mathcal{A}$. A \emph{one-sided shift space} or \emph{one-sided shift} on $\mathcal{A}$ is a closed subset $X$ of $\mathcal{A}^\N$ such that $\s(X)\subseteq X$. A morphism between two one-sided shifts $X$ and $Y$ (possibly on different alphabets) is a continuous map $\f\colon X\to Y$ such that  
$$
\xymatrix{
	X \ar_\s[d] \ar^\f[r] & Y \ar^\s[d]  \\
	X   \ar^\f[r] & Y	
}
$$
commutes. A \emph{word} or \emph{block} of length $i$ is a sequence of $i$ elements from $\mathcal{A}$. A one-sided shift $X$ on $\mathcal{A}$ is a (one-sided) \emph{subshift of finite type} if there exists a finite set $\mathcal{F}$ of blocks such that $X$ consists of all elements of $\mathcal{A}^\N$ that do not contain any blocks from $\mathcal{F}$.

An important class of subshifts of finite type is formed by those coming from directed graphs, also called $1$-step subshifts of finite type. Let $\Gamma$ be a directed graph with set of vertices equal to $\mathcal{A}$. Then the set $X_\Gamma\subseteq\mathcal{A}^\N$ of all sequences in $\mathcal{A}$ that trace out the vertices of an infinite directed path in $\Gamma$, is a subshift of finite type.

The \emph{topological entropy} of a one-sided shift $X$ on $\mathcal{A}$ is
$$h(X)=\lim_{i\to\infty}\frac{\log(d_i)}{i},$$
where $d_i$ denotes the cardinality of the image of $X\to \mathcal{A}^i,\ (a_0,a_1,\ldots)\mapsto (a_0,a_1,\ldots,a_{i-1})$, i.e., the cardinality of all blocks of length $i$ that occur in elements of $X$.

In this article, we are mainly interested in the situation when the alphabet is a finite group. If $\G$ is a finite group, then $\G^\N$ inherits a group structure via componentwise multiplication. Indeed  $\G^\N$ is a profinite group and $\s\colon \G^\N\to\G^\N$ is a continuous group homomorphism. In this situation, the pair $(\G^\N,\s)$ is called the \emph{full one-sided group shift} on $\G$.
A \emph{one-sided group shift} $G$ on $\G$ is a one-sided shift on $\G$ such that $G$ is a subgroup of $\G^\N$. A morphism between two one-sided group shifts is a morphism between one-sided shifts that is a group homomorphism. We note that (in the two-sided context) group shifts are called Markov subgroups in \cite[Section 6.3]{Kitchens:SymbolicDynamics}. Here we follow the nomenclature from \cite{LindMarcus:IntroductionToSymbolicDynamisAndCoding} and \cite{BoyleSchrauder:ZdgroupshiftsAndBernoulliFactors}.   

Let $G$ be a one-sided group shift on the finite group $\G$. The following notation will be useful: For $i\in\N$, let $G[i]$ denote the image of the group homomorphism 
$G\to \G^{i+1},\ (g_0,g_1,\ldots)\mapsto (g_0,g_1,\ldots,g_i)$. Then $G[i]$ is the subgroup of $\G^{i+1}$ consisting of all blocks of length $i+1$ that occur in elements of $G$.
For $i\geq 1$, the map $$\pi_i\colon G[i]\to G[i-1],\ (g_0,\ldots,g_i)\mapsto (g_0,\ldots,g_{i-1})$$ is a surjective group homomorphism.

Every two-sided group shift is a subshift of finite type (\cite[Section 6.3]{Kitchens:SymbolicDynamics}). As we now show, a similar result holds for one-sided group shifts. The proof has some important consequences.

\begin{prop} \label{prop: group shift is of finite type}
	Every one-sided group shift is a subshift of finite type.
\end{prop}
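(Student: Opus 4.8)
The plan is to prove the stronger statement that $G$ is an \emph{$m$-step} subshift of finite type for a suitable $m$, i.e.\ that $G$ is cut out by forbidding all $(m+1)$-blocks that do not occur in elements of $G$. Concretely, I would fix an integer $m\geq 1$ (to be determined) and consider
$$\widetilde{G}=\{(g_0,g_1,\ldots)\in\G^\N : (g_j,\ldots,g_{j+m})\in G[m]\ \text{ for all } j\in\N\},$$
which is manifestly a subshift of finite type, the forbidden set being the finite collection of $(m+1)$-blocks lying outside $G[m]$. The inclusion $G\subseteq\widetilde{G}$ is immediate, since every length-$(m+1)$ window of an element of $G$ is, after applying a power of $\s$, a block occurring in $G$. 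The whole content is the reverse inclusion $\widetilde{G}\subseteq G$; and since $G$ is closed, hence equal to the set of sequences all of whose truncations lie in the corresponding $G[i]$, this amounts to showing that every element of $\widetilde{G}$ has all its truncations in the groups $G[i]$.

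The structural input I would extract first concerns the kernels of the projections $\pi_i\colon G[i]\to G[i-1]$. Identifying $\ker(\pi_i)$ with the subgroup $N_i=\{h\in\G:(1,\ldots,1,h)\in G[i]\}$ of $\G$ via the last coordinate, two facts are key. First, for any block $(g_0,\ldots,g_i)\in G[i]$ the set of admissible continuations $\{x\in\G:(g_0,\ldots,g_i,x)\in G[i+1]\}$ is a single coset of $N_{i+1}$; this is a routine consequence of $\pi_{i+1}$ being a surjective homomorphism. Second, $\s$-stability forces $N_{i+1}\subseteq N_i$: if $(1,\ldots,1,h)\in G[i+1]$ is realized by some $g\in G$ with $g_0=\cdots=g_i=1$ and $g_{i+1}=h$, then $\s(g)\in G$ realizes $(1,\ldots,1,h)\in G[i]$. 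Since the $N_i$ are subgroups of the finite group $\G$, the descending chain $N_1\supseteq N_2\supseteq\cdots$ stabilizes, and I would take $m$ so that $N_i=N$ for all $i\geq m$.

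The crux is then a bounded-memory statement: for $i\geq m$ and $(g_0,\ldots,g_i)\in G[i]$, the coset of admissible continuations depends only on the last $m$ symbols, and in fact equals $\{x:(g_{i-m+1},\ldots,g_i,x)\in G[m]\}$. One inclusion is given by shifting (a continuation of the long block is, after applying $\s^{i-m+1}$, a continuation of the short block in $G[m]$); and since both continuation sets are nonempty cosets of the \emph{same} group $N=N_{i+1}=N_m$, a containment of cosets forces equality. With this in hand the reverse inclusion follows by induction on $i$: for a sequence in $\widetilde{G}$, the window condition at $j=i-m+1$ places $g_{i+1}$ in the admissible-continuation coset of the short block, which by the bounded-memory statement is exactly the admissible-continuation coset of $(g_0,\ldots,g_i)$, whence $(g_0,\ldots,g_{i+1})\in G[i+1]$; the base case $i\leq m$ is the window condition at $j=0$. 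Passing to the limit and using closedness, $\widetilde G=\varprojlim G[i]=G$.

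I expect the main obstacle to be isolating the right invariant and proving that the admissible continuations are governed by a window of bounded length. The two ingredients that make this work are the monotonicity $N_{i+1}\subseteq N_i$ coming from $\s$-stability (this is precisely where the one-sided \emph{shift} structure, rather than mere closedness as a subgroup, enters) and the elementary but decisive observation that once the kernels have stabilized, an inclusion between two cosets of the common group $N$ is automatically an equality. Everything else is bookkeeping with the surjections $\pi_i$ and the identification of $G$ with $\varprojlim G[i]$.
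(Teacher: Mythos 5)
Your proof is correct and takes essentially the same approach as the paper: both identify the follower subgroups $N_i\cong\ker(\pi_i)$, use $\s$-stability to show they form a descending chain of subgroups of $\G$ that stabilizes, and then prove by induction on block length that the subshift of finite type cut out by the $(m+1)$-blocks of $G$ coincides with $G$. The only cosmetic difference is that you run the induction element-wise, using the fact that an inclusion of two cosets of the common stabilized group $N$ forces equality, while the paper phrases the same step as an equality of kernels $\ker(\pi_{i+1})=\ker(\pi'_{i+1})$ in a commutative square comparing $G[i+1]$ with $G'[i+1]$.
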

\begin{proof}
	Let $\G$ be a finite group and $G\leq \G^\N$ a one-sided group shift on $\G$. Set $\G_0=G[0]$ and for $i\geq 1$, let $\G_i\subseteq \G$ denote the follower set of $(1,\ldots,1)\in\G^i$, i.e., $\ker(\pi_i)=\{(1,\ldots,1,g)\in\G^{i+1} |\ g\in \G_i \}$. So $\G_i$ is a subgroup of $\G$. As $G$ is stable under the shift map $\s\colon \G^\N\to\G^\N$, we also have group homomorphisms $\s_i\colon  G[i]\to G[i-1],\ (g_0,\ldots,g_i)\mapsto (g_1,\ldots,g_i)$. Since $\s_i$ maps $\ker(\pi_i)$ injectively into $\ker(\pi_{i-1})$, we see that $\G_{i-1}\subseteq \G_i$.
	We thus have a descending chain $\G_0\supseteq \G_1\supseteq \G_2\supseteq \ldots$ of subgroups of $\G$ that must eventually stabilize. Let $\G'=\bigcap_{i\in\N}\G_i$ denote the eventual value and let $n\in\N$ be minimal with the property that $\G_i=\G'$ for all $i\geq n$.
	Set $\mathcal{H}=G[n]$ and let $G'\subseteq \G^\N $ denote the subshift of finite type that avoids all blocks from $\mathcal{F}=G[n]\smallsetminus \mathcal{H}$.
	In other words, $G'$ is the subgroup of $\G^\N$ consisting of all elements that have all their blocks of length $n+1$ inside $\mathcal{H}$. By construction $G\subseteq G'$.
	
	We claim that $G=G'$. We have $G[i]\leq G'[i]$ for $i\in\N$. To show that $G=G'$ it suffices to show that $G[i]=G'[i]$ for $i\in\N$. This is clear for $i=0,\ldots,n$ and we will prove the general case by induction on $i$. So we assume that $i\geq n$ and that $G[i]=G'[i]$. We have to show that $G[i+1]=G'[i+1]$. There is a commutative diagram
	$$
	\xymatrix{
		G[i+1] \ar_{\pi_{i+1}}[d]  \ar@{^(->}[r] & G'[i+1]  \ar^{\pi'_{i+1}}[d] \\
		G[i] \ar@{=}[r]& G'[i]
	}
	$$
	where the vertical maps $\pi_{i+1}$ and $\pi'_{i+1}$ are the surjective group homomorphisms given by projection onto the first $i+1$ coordinates. It suffices to show that $\ker(\pi_{i+1})=\ker(\pi'_{i+1})$. Clearly $\ker(\pi_{i+1})\subseteq\ker(\pi'_{i+1})$. Moreover, $\ker(\pi_{i+1})=\{1\}^{i+1}\times\G'\leq\G^{i+2}$ since $i+1\geq n$. Let $h=(1,\ldots,1,g)\in\ker(\pi'_{i+1})\leq\G^{i+2}$. By definition of $G'$, the element $(1,\ldots,1,g)\in\G^{n+1}$ lies in $\mathcal{H}=G[n]$ and so it lies in the kernel of $\pi_n\colon\mathcal{H}=G[n]\to G[n-1]$. We conclude that $g\in\G'$ and $h\in \ker(\pi_{i+1})$ as desired.
\end{proof}

%

\begin{cor} \label{cor: entropy and limit degree}
	Let $G$ be a one-sided group shift. Then $h(G)=\log(d)$ for some integer $d\geq 1$.
\end{cor}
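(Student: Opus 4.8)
The plan is to reduce the computation of $h(G)$ to the asymptotic growth of the cardinalities $|G[i]|$, and to read off this growth directly from the data assembled in the proof of Proposition~\ref{prop: group shift is of finite type}. Observe first that the quantity $d_i$ appearing in the definition of $h(G)$ is precisely $|G[i-1]|$, since the image of $G\to\G^i$ onto the first $i$ coordinates is exactly $G[i-1]$. Thus it suffices to understand $|G[i]|$ for large $i$. The integer $d$ in the statement will turn out to be $|\G'|$, the order of the eventual value $\G'=\bigcap_{i}\G_i$ of the descending chain of subgroups $\G_0\supseteq\G_1\supseteq\cdots$ constructed there.

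Next I would compute $|G[i]|$ recursively. Since $\pi_i\colon G[i]\to G[i-1]$ is a surjective group homomorphism, $|G[i]|=|G[i-1]|\cdot|\ker(\pi_i)|$. By the description of the kernels recalled in that proof, $\ker(\pi_i)=\{(1,\ldots,1,g)\mid g\in\G_i\}$, so $|\ker(\pi_i)|=|\G_i|$. Iterating gives
$$|G[i]|=|G[0]|\prod_{j=1}^{i}|\G_j|.$$
Let $n$ be the index from the proposition beyond which $\G_j=\G'$, and write $d=|\G'|\geq 1$ (this is at least $1$ because $\G'$ is a subgroup, hence contains the identity). For $i\geq n$ the factors $|\G_j|$ with $j\geq n$ all equal $d$, so
$$|G[i]|=A\,d^{\,i}, \qquad A:=|G[0]|\Big(\prod_{j=1}^{n-1}|\G_j|\Big)d^{-(n-1)},$$
a positive constant independent of $i$.

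Finally I would substitute into the entropy formula. Using $d_i=|G[i-1]|=A\,d^{\,i-1}$ for $i\geq n+1$,
$$h(G)=\lim_{i\to\infty}\frac{\log(d_i)}{i}=\lim_{i\to\infty}\frac{\log(A)+(i-1)\log(d)}{i}=\log(d),$$
since the bounded additive term $\log(A)$ contributes nothing after dividing by $i$. This yields $h(G)=\log(d)$ with $d=|\G'|$ an integer $\geq 1$, as required. I do not anticipate a genuine obstacle here: the entire content is the stabilization of the chain $\G_0\supseteq\G_1\supseteq\cdots$ already established in the proof of Proposition~\ref{prop: group shift is of finite type}, which forces the multiplicative growth of $|G[i]|$ to be governed by the single integer $d=|\G'|$. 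The only point requiring a little care is the bookkeeping that the subexponential prefactor $A$ washes out in the limit.
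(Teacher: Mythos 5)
Your proposal is correct and follows essentially the same route as the paper: both identify $d_i$ with $|G[i-1]|$, use the kernel description $|\ker(\pi_i)|=|\G_i|$ and the stabilization $\G_i=\G'$ for $i\geq n$ from the proof of Proposition~\ref{prop: group shift is of finite type} to get $|G[i]|=A\,d^{\,i}$ with $d=|\G'|$, and then let the constant wash out in the limit. The only difference is cosmetic bookkeeping in how the prefactor is written.
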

\begin{proof}
	We use the notation of the proof of Proposition \ref{prop: group shift is of finite type} and furthermore set $d=|\G'|$. For $i\geq 1$ we have $|G[i]|=|G[i-1]|\cdot|\G_{i}|$ and so inductively, $$|G[i]|=|\G_0|\cdot|\G_1|\ldots|\G_{i}|=|\G_0|\ldots|\G_{n-1}|\cdot|\G_n|^{i-n+1}=|\G_0|\ldots|\G_{n-1}|\cdot d^{i-n+1}$$
	for $i\geq n$.
	 Thus 
	 $$h(G)=\lim_{i\to\infty}\frac{\log|G[i]|}{i+1}=\lim_{i\to\infty}\frac{\log(|\G_0|\ldots|\G_{n-1}|\cdot d^{-n})}{i+1}+\lim_{i\to\infty}\frac{\log(d^{i+1})}{i+1}=\log(d).$$
\end{proof}

%

\begin{defi}
	Let $G\leq \G^\N$ be a one-sided group shift on the finite group $\G$ and $n\geq 1$. Then $G$ is an \emph{$n$-step group shift of finite type} if there exists a subgroup $\mathcal{H}$ of $\G^{n+1}$ such that $G$ consists of exactly those elements of $\G^\N$ that have all blocks of length $n+1$ inside $\mathcal{H}$. 
\end{defi}
The following corollary is immediate from the proof of Proposition \ref{prop: group shift is of finite type}.
\begin{cor} \label{cor: every group shift is Nstep}
Every one-sided group shift is an $n$-step group shift of finite type for some $n\ge 1$.\qed
\end{cor}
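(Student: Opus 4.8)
The plan is to observe that the proof of Proposition~\ref{prop: group shift is of finite type} already produces, essentially for free, exactly the data demanded by the definition of an $n$-step group shift of finite type. That proof does more than exhibit some finite set of forbidden blocks: it singles out a specific integer $n$ (the least one with $\G_i=\G'$ for all $i\geq n$) together with the set $\mathcal{H}=G[n]$, and it proves that $G=G'$, where $G'$ is by construction the set of all elements of $\G^\N$ each of whose blocks of length $n+1$ lies in $\mathcal{H}$. This is word for word the condition appearing in the definition, so the corollary reduces to checking two small points.

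First, $\mathcal{H}$ must be a subgroup of $\G^{n+1}$; this is automatic, since $\mathcal{H}=G[n]$ is by definition the image of the group homomorphism $G\to\G^{n+1},\ (g_0,g_1,\ldots)\mapsto(g_0,\ldots,g_n)$, and the image of a group homomorphism is a subgroup. Second, the definition requires $n\geq 1$, whereas the construction could conceivably return $n=0$ (this happens, for instance, for the full shift $G=\G^\N$, where every $\G_i$ equals $\G$). To handle this I would record the elementary monotonicity statement that an $n$-step group shift of finite type with window subgroup $G[n]$ is also an $(n+1)$-step group shift of finite type with window subgroup $G[n+1]$: one inclusion follows from the shift-invariance of $G$, and for the other one uses that $\pi_{n+1}\colon G[n+1]\to G[n]$ sends each block of length $n+2$ to its prefix of length $n+1$, so that constraining all blocks of length $n+2$ to lie in $G[n+1]$ already forces all blocks of length $n+1$ to lie in $G[n]$. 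Consequently, if the proof yields $n=0$, I may simply replace it by $n=1$.

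I do not anticipate any genuine obstacle here, since the mathematical content is entirely contained in the proof of Proposition~\ref{prop: group shift is of finite type}. The only mild subtlety is the bookkeeping that forces $n\geq 1$, which is dispatched by the monotonicity remark above.
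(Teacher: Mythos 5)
Your proposal is correct and takes the same route as the paper, which simply declares the corollary immediate from the proof of Proposition~\ref{prop: group shift is of finite type}: the integer $n$ and the window subgroup $\mathcal{H}=G[n]$ constructed there are exactly the data required by the definition. Your monotonicity remark handling the case $n=0$ (e.g.\ the full shift) is a valid and worthwhile piece of bookkeeping that the paper leaves implicit, since the definition indeed demands $n\geq 1$.
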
	

$1$-step subshifts of finite type are described by directed graphs. The following definition provides a group version of this well-known fact. 

\begin{defi} \label{def: group graph}
	Let $\G$ be a finite group. A \emph{directed group graph} on $\G$ is a directed graph $\Gamma$ such that the set of vertices of $\Gamma$ equals $\G$ and such that the set of directed edges of $\Gamma$ is a subgroup of $\G\times \G$.
\end{defi}
For a directed group graph $\Gamma$ on $\G$ let $$G_\Gamma=X_\Gamma\subseteq \G^\N$$ denote the subshift of finite type defined by $\Gamma$. Then $G_\Gamma$ is a $1$-step group shift of finite type. Conversely, every $1$-step group shift $G\subseteq \G^\N$ is of the form $G=G_\Gamma$ for some directed group graph $\Gamma$.

In a directed group graph the set of directed edges is a group. The same is true for the set of directed paths of a fixed length (finite or infinite): Two directed paths are multiplied by multiplying the vertices and the directed edges. We will see later (Lemma \ref{lemma: exists standard embedding}) that every one-sided group shift is isomorphic to some $G_\Gamma$.
%

\section{Expansive profinite groups}

If $G$ is a one-sided group shift, then $G$ is a profinite group and $\s\colon G\to G$ is an expansive endomorphism. Conversely, we will see that every profinite group with an expansive endomorphism is isomorphic to a one-sided group shift. It is sometimes beneficial to work inside this larger category of expansive profinite groups. For example, if $G$ and $N$ are one-sided group shifts on a finite group $\G$ such that $N$ is a normal subgroup of $G$, then the quotient $G/N$ is not a one-sided group shift on the nose as there is no canonical choice of the alphabet for $G/N$. On the other hand, it is clear that $G/N$ is a profinite group equipped with an endomorphism (which can be shown to be expansive).

In this section we provide some basic definitions and results concerning expansive profinite groups that will then be used in the next section to establish the main decomposition theorem (Theorem \ref{theo: main}). In particular, we define the $\s$-identity component and the limit degree of an expansive profinite group.

\subsection{Expansive profinite groups versus one-sided group shifts}
Let $G$ be a profinite group. A continuous group homomorphism  $\s\colon G\to G$ is called an \emph{expansive endomorphism} if there exists a neighborhood $U$ of $1\in G$ such that $\bigcap_{n\in \N}\s^{-n}(U)=1$. Since the open normal subgroups of $G$ are a neighborhood basis for $1$ (\cite[Theorem~2.1.3]{RibesZaleskii:ProfiniteGroups}), we can assume that $U$ is an open normal subgroup of $G$.

\begin{defi} \label{defi: expansive profinite group}
 An \emph{expansive profinite group} is a profinite group $G$ together with an expansive endomorphism $\s\colon G\to G$.
\end{defi}
We will usually omit $\s$ from the notation and simply refer to $G$ as an expansive profinite group.
A morphism between expansive profinite groups $G$ and $H$ is a continuous group homomorphism $\f\colon G\to H$ such that
$$
\xymatrix{
G \ar_\s[d] \ar^\f[r] & H \ar^\s[d]  \\
G   \ar^\f[r] & H	
}
$$
commutes.

\begin{ex} \label{ex: group  shift is expansive}
	Let $G\leq\G^\N$ be a one-sided group shift on the finite group $\G$. Then $G$ is an expansive profinite group. Indeed, we can choose $U=\{(g_0,g_1,\ldots)\in G|\ g_0=1\}$.
\end{ex}

See Lemma \ref{lemma: expansive structure on Galois group} for an explanation how expansive profinite groups naturally occur in the study of Galois extensions of difference fields. The following lemma provides a converse to Example~\ref{ex: group  shift is expansive}.
\begin{lemma} \label{lemma: expansive profinite group is group shift}
	Every expansive profinite group is isomorphic to a one-sided group shift. 
\end{lemma}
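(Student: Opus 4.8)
The plan is to realize $G$ concretely inside a full one-sided group shift by means of the orbit map associated to a suitable open normal subgroup. By expansiveness, fix an open normal subgroup $U$ of $G$ with $\bigcap_{n\in\N}\s^{-n}(U)=1$, and set $\G=G/U$; this is a finite group since $U$ is open and therefore of finite index. I would then define
$$
\theta\colon G\to \G^\N,\qquad g\mapsto (\s^n(g)U)_{n\in\N},
$$
so that the $n$-th coordinate of $\theta(g)$ is the image of $\s^n(g)$ in $G/U$. First I would check that $\theta$ is a continuous group homomorphism: each coordinate map $g\mapsto \s^n(g)U$ is the composition of the continuous map $\s^n$ with the quotient map $G\to G/U$, hence continuous, and a map into a product is continuous precisely when all its coordinates are; it is a homomorphism because $\s$ is one and multiplication in $\G^\N$ is componentwise.

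The key step is injectivity, and this is exactly where expansiveness enters: if $\theta(g)=1$, then $\s^n(g)\in U$, i.e.\ $g\in\s^{-n}(U)$, for every $n\in\N$, whence $g\in\bigcap_{n\in\N}\s^{-n}(U)=1$. Next I would verify shift-equivariance. Writing $\s$ also for the shift on $\G^\N$, one computes $\theta(\s(g))=(\s^{n+1}(g)U)_{n\in\N}=\s(\theta(g))$, so $\theta$ intertwines the endomorphism of $G$ with the shift; in particular the image $G'=\theta(G)$ is $\s$-stable.

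It then remains to see that $G'$ is a one-sided group shift and that $\theta$ is an isomorphism of expansive profinite groups. Since $G$ is compact and $\theta$ is continuous, $G'$ is a compact, hence closed, subgroup of $\G^\N$; being also $\s$-stable, it is a one-sided group shift on $\G$. Finally, $\theta\colon G\to G'$ is a continuous bijective homomorphism between compact Hausdorff groups, hence a homeomorphism and therefore an isomorphism of topological groups; as it is $\s$-equivariant, it is an isomorphism of expansive profinite groups. I do not expect a genuine obstacle here: the only real subtlety is choosing the right map, and once $\theta$ is written down every verification is routine, with expansiveness used solely to force injectivity and compactness used to upgrade the continuous bijection onto its image to a topological isomorphism.
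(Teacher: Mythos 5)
Your proof is correct and follows essentially the same route as the paper: the map $\theta$ is exactly the paper's embedding $g\mapsto (g,\s(g),\s^2(g),\ldots)$ into $(G/U)^\N$, with expansiveness giving injectivity and the image being a closed $\s$-stable subgroup. The only cosmetic differences are that you verify continuity via the universal property of the product and upgrade the bijection to a homeomorphism by compactness, where the paper checks preimages of basic open sets and cites the fact that continuous homomorphisms of profinite groups are closed maps; these are equivalent arguments.
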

\begin{proof}
	Let $G$ be an expansive profinite group and let $U$ be an open normal subgroup of $G$ such that  $\bigcap_{n\in \N}\s^{-n}(U)=1$. Then $\G=G/U$ is a finite group and 
	$$\f\colon G\to\G^\N,\ g\mapsto (g,\s(g),\s^2(g),\ldots)$$
	is a group homomorphism that commutes with $\s$. It is injective because  $\bigcap_{n\in \N}\s^{-n}(U)=1$.
	
	 The inverse image of a basic open subset $$V=V(h_0,\ldots,h_n)=\{(g_0,g_1,\ldots)\in\G^\N|\ g_0=h_0,\ldots,g_n=h_n\}$$ of $\G^\N$, where $h_1,\ldots,h_n\in \G$ are $U$-cosets in $G$, equals $h_0\cap \s^{-1}(h_1)\cap\ldots\cap \s^{-n}(h_n)$, which is open $G$. Thus $\f$ is continuous. 
	
	A continuous group homomorphism between profinite groups is closed (\cite[Remark~1.2.1~(e)]{FriedJarden:FieldArithmetic}). In particular, $\f(G)\subseteq\G^\N$ is closed. As $\f(G)$ is stable under $\s$, we see that $\f(G)$ is a one-sided group shift. Because $\f$ is closed it follows that $\f\colon G\to \f(G)$ is a homeomorphism and thus an isomorphism. 
\end{proof}

From the above lemma and example we see that the concepts ``expansive profinite group'' and ``one-sided group shift'' are interchangeable. More precisely we have:

\begin{cor}
	The category of one-sided group shifts is equivalent to the category of expansive profinite groups.
\end{cor}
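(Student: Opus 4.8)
The plan is to establish the claimed equivalence of categories by exhibiting mutually inverse (up to natural isomorphism) functors, though in fact most of the heavy lifting has already been done by Lemma~\ref{lemma: expansive profinite group is group shift} and Example~\ref{ex: group shift is expansive}. First I would observe that every one-sided group shift is, by Example~\ref{ex: group shift is expansive}, an expansive profinite group, and that a morphism of one-sided group shifts (a continuous, shift-commuting group homomorphism) is precisely the same data as a morphism of expansive profinite groups between the underlying objects. This gives a fully faithful inclusion functor $\iota$ from the category of one-sided group shifts into the category of expansive profinite groups: the two categories share the same notion of morphism, namely continuous group homomorphisms $\f$ satisfying $\f\s=\s\f$.

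Next I would verify that $\iota$ is essentially surjective, which is exactly the content of Lemma~\ref{lemma: expansive profinite group is group shift}: every expansive profinite group $G$ is isomorphic, as an expansive profinite group, to the one-sided group shift $\f(G)\leq \G^\N$ constructed there. Since a functor that is fully faithful and essentially surjective is an equivalence of categories, this would complete the proof. The cleanest way to present this is to recall the standard criterion (fully faithful $+$ essentially surjective $\Rightarrow$ equivalence) and then check the two hypotheses separately.

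The one point requiring a little care, and the step I would flag as the main (minor) obstacle, is full faithfulness, i.e.\ that $\Hom$ in the shift category coincides with $\Hom$ in the expansive category for objects that happen to be one-sided group shifts. Here I would argue that if $X$ and $Y$ are one-sided group shifts viewed as expansive profinite groups, then a morphism of expansive profinite groups $\f\colon X\to Y$ is by definition a continuous group homomorphism with $\f\s=\s\f$; but the shift map on a one-sided group shift \emph{is} the restriction of the full-shift $\s$, so this is precisely a morphism of one-sided group shifts, and conversely. Thus the two $\Hom$-sets are literally equal, so $\iota$ is fully faithful. Combined with essential surjectivity from Lemma~\ref{lemma: expansive profinite group is group shift}, the functor $\iota$ is an equivalence of categories.

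Alternatively, if one prefers an explicit quasi-inverse rather than invoking the abstract criterion, I would define a functor in the other direction by sending an expansive profinite group $G$ (with a fixed choice of open normal subgroup $U$ as in Lemma~\ref{lemma: expansive profinite group is group shift}) to the group shift $\f_U(G)\leq (G/U)^\N$, and a morphism to its induced map on these shifts; one then checks the composites are naturally isomorphic to the identity functors. This requires verifying naturality and dealing with the choice of $U$, which is why the fully-faithful-plus-essentially-surjective route is the more economical one and the approach I would actually carry out.
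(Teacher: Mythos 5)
Your proposal is correct and follows essentially the same route as the paper: the inclusion functor is fully faithful because both categories use the same morphisms (continuous, $\s$-equivariant group homomorphisms), and it is essentially surjective by Lemma~\ref{lemma: expansive profinite group is group shift}, so the standard criterion gives the equivalence. The paper's proof is just a terser statement of exactly this argument.
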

\begin{proof}
	Every one-sided group shift is an expansive profinite group (Example \ref{ex: group  shift is expansive}). Since in both categories the morphisms are defined in the same fashion, it suffices to know that every expansive profinite group is isomorphic to a one-sided group shift. This is exactly Lemma \ref{lemma: expansive profinite group is group shift}.
\end{proof}

We can thus think of expansive profinite groups as a ``coordinate free" version of one-sided group shifts. One advantage of working with the larger category of expansive profinite groups is that certain constructions, such as quotients by normal closed $\s$-stable subgroups, are more naturally performed in this category.

\subsection{Group theory for expansive profinite groups} \label{subsec: Group theory for expansive profinite groups}

The isomorphism theorems for abstract groups carry over without difficulty to the category of expansive profinite groups. The same holds for the Schreier refinement theorem, which will be the key for establishing the uniqueness in our main decomposition theorem (Theorem \ref{theo: main}). 

The following lemma shows that subgroups and quotients of expansive profinite groups are well-behaved.

\begin{lemma} \label{lemma: expansive subgroups and quotients}
	Let $G$ be an expansive profinite group.
	\begin{enumerate}
		\item If $H$ is a closed $\s$-stable subgroup of $G$, then $H$ (with the induced topology and endomorphism) is an expansive profinite group.
		\item If $N$ is a normal closed $\s$-stable subgroup of $G$, then $G/N$ (with the quotient topology and induced endomorphism) is an expansive profinite group and the canonical map $G\to G/N$ is a morphism of expansive profinite groups. 
	\end{enumerate}	
\end{lemma}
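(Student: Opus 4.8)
The plan is to prove both statements directly from the definition of expansiveness, exhibiting in each case an explicit open (normal) subgroup witnessing the expansive condition, built from the one we are given for $G$. Fix an open normal subgroup $U$ of $G$ with $\bigcap_{n\in\N}\s^{-n}(U)=1$; this exists by the remark following Definition~\ref{defi: expansive profinite group}.

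For part~(i), let $H$ be a closed $\s$-stable subgroup. The subspace topology on $H$ makes it a profinite group, and $\s|_H\colon H\to H$ is a well-defined continuous endomorphism precisely because $H$ is $\s$-stable. The only thing to check is expansiveness. I would take $U\cap H$, which is open in $H$. Intersecting the given identity with $H$ gives $\bigcap_{n\in\N}(\s|_H)^{-n}(U\cap H)=\bigcap_{n\in\N}\s^{-n}(U)\cap H=1\cap H=1$, where the first equality holds because for $h\in H$ one has $\s^n(h)\in U$ iff $\s^n(h)\in U\cap H$ (as $\s^n(h)\in H$ automatically). Hence $U\cap H$ witnesses that $\s|_H$ is expansive, and $H$ is an expansive profinite group.

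For part~(ii), let $N$ be a normal closed $\s$-stable subgroup and write $\pi\colon G\to G/N$ for the quotient map. Then $G/N$ is again a profinite group in the quotient topology, $\pi$ is continuous, and because $\s(N)\subseteq N$ the map $\s$ descends to a continuous endomorphism $\overline\s\colon G/N\to G/N$ with $\overline\s\circ\pi=\pi\circ\s$; this last identity is exactly the statement that $\pi$ is a morphism of expansive profinite groups, once expansiveness of $G/N$ is established. For the witnessing subgroup I would take $\overline U=\pi(UN)=UN/N$, the image of the open normal subgroup $UN$; since $\pi$ is open this is an open normal subgroup of $G/N$. The claim is that $\bigcap_{n\in\N}\overline\s^{-n}(\overline U)=1$. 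Unwinding, $\pi(g)\in\overline\s^{-n}(\overline U)$ means $\overline\s^n(\pi(g))=\pi(\s^n(g))\in UN/N$, i.e.\ $\s^n(g)\in UN$. So the intersection pulls back along $\pi$ to $\bigcap_{n\in\N}\s^{-n}(UN)$, and I must show this equals $N$.

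The main obstacle is precisely this last equality $\bigcap_{n\in\N}\s^{-n}(UN)=N$, since the naive argument only gives the inclusion $\supseteq$ and a term-by-term intersection $\bigcap_n\s^{-n}(U)\cdot N$ need not collapse to $N$. The clean resolution uses compactness: the subgroups $\s^{-n}(UN)$ form a descending chain (after replacing $U$ by $\bigcap_{k\le m}\s^{-k}(U)$ one may arrange $\s^{-1}(U)\subseteq U$, making $\s^{-n}(U)$ decreasing) of closed subgroups containing $N$, and I would argue that any $g$ in the full intersection, together with $N$ compact, forces a coset $gN$ meeting every $\s^{-n}(U)$; a diagonal/compactness argument then produces an element of $gN\cap\bigcap_n\s^{-n}(U)=gN\cap\{1\}$, whence $g\in N$. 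Concretely, the cosets $gN\cap\s^{-n}(U)$ are nonempty closed subsets of the compact set $gN$ with the finite intersection property, so their total intersection is nonempty and lies in $\bigcap_n\s^{-n}(U)=1$. This yields $\overline\s$-expansiveness of $G/N$ and simultaneously that $\pi$ commutes with the endomorphisms, completing the proof.
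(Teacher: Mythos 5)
Part (i) of your proposal is correct and coincides with the paper's argument (intersect a witnessing subgroup with $H$).

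Part (ii) contains a genuine gap, located exactly at what you call the ``main obstacle'': the equality $\bigcap_{n\in\N}\s^{-n}(UN)=N$ is simply false for a general witness $U$, so no compactness argument can establish it. Two specific failures in your sketch. First, $\s^n(g)\in UN$ does \emph{not} imply $gN\cap\s^{-n}(U)\neq\emptyset$: the latter is equivalent to $\s^n(g)\in U\cdot\s^n(N)$, and since $\s$ is only assumed to satisfy $\s(N)\subseteq N$ (it need not be surjective on $N$), $\s^n(N)$ can be a proper subgroup of $N$. Second, even when the individual sets $gN\cap\s^{-n}(U)$ are nonempty, they need not have the finite intersection property, because the subgroups $\s^{-n}(U)$ are not nested; your parenthetical fix does not repair this, since replacing $U$ by $\bigcap_{k\le m}\s^{-k}(U)$ still does not give $\s^{-1}(U)\subseteq U$ (the preimage of the finite intersection is $\bigcap_{1\le k\le m+1}\s^{-k}(U)$, which is not contained in $\bigcap_{k\le m}\s^{-k}(U)$). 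Here is a concrete counterexample to the key equality: let $G=(\Z/2\Z)^\N$ be the full one-sided group shift, let $N=\{(0,0,\ldots),(1,1,\ldots)\}$ be the closed normal $\s$-stable subgroup of constant sequences, and let $U=\{g\in G\mid g_0=0\}$. Then $\bigcap_n\s^{-n}(U)=1$, so $U$ is a legitimate witness for $G$; but $U$ has index two and $N\not\subseteq U$, so $UN=G$, hence $\bigcap_n\s^{-n}(UN)=G\neq N$ and your $\overline{U}$ is all of $G/N$ --- while $G/N$ is infinite (it is isomorphic to the full shift on $\Z/2\Z$ via $g\mapsto(g_i+g_{i+1})_{i\in\N}$). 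So for this $U$ the construction fails outright.

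The missing idea is that the witnessing subgroup for $G/N$ must be built from the structure of $N$ itself rather than from an arbitrary witness for $G$, and this is where the paper invests real work. The paper's proof reduces to the case where $G\leq\G^\N$ is a one-sided group shift (Lemma~\ref{lemma: expansive profinite group is group shift}) and then invokes Corollary~\ref{cor: every group shift is Nstep} (which rests on Proposition~\ref{prop: group shift is of finite type}, that every group shift is of finite type) to conclude that $N$ is an $n$-step group shift; in effect it takes $U_0=\{g\in G\mid (g_0,\ldots,g_n)\in N[n]\}$. This $U_0$ contains $N$ and satisfies $\bigcap_i\s^{-i}(U_0)=N$ --- precisely because membership in $N$ is determined by the $(n+1)$-blocks, i.e., by finite type --- and then $U_0/N$ witnesses expansiveness of $G/N$. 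Note that $U_0$ violates your standing hypothesis: its iterated-preimage intersection is $N$, not $1$. In the counterexample above, the paper's subgroup is $U_0=\{g\mid g_0=g_1\}$, which does work. (Your formula $\overline{U}=UN/N$ can be salvaged exactly when $U\subseteq U_0$, since then $N\subseteq UN\subseteq U_0$ gives $\bigcap_n\s^{-n}(UN)=N$; but producing such a $U_0$ is precisely the finite-type input that your proof never supplies.)
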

\begin{proof}
	A closed subgroup of a profinite group is a profinite group (\cite[Proposition 2.2.1]{RibesZaleskii:ProfiniteGroups}). If $U$ is an open subgroup of $G$ such that $\bigcap_{n\in\N}\s^{-n}(U)=1$, then $U'=H\cap U$ is an open subgroup of $H$ and $\bigcap_{n\in\N}{\s'}^{-n}(U')=1$, where $\s'\colon H\to H$ is the restriction of $\s\colon G\to G$. This proves (i).
	
	The quotient of a profinite group by a closed normal subgroup is a profinite group (\cite[Proposition 2.2.1]{RibesZaleskii:ProfiniteGroups}). Because $\s(N)\subseteq N$ we have a well-defined continuous group homomorphism $\overline{\s}\colon G/N\to G/N,\ gN\mapsto \s(g)N$. 
	
	To show that $\overline{\s}$ is expansive, we may assume that $G$ is a one-sided group shift on a finite group $\G$ (Lemma \ref{lemma: expansive profinite group is group shift}). Then, it follows from Corollary \ref{cor: every group shift is Nstep} that $N\leq \G^\N$ is an $n$-step group shift of finite type for some $n\geq 1$. Note that because $N$ is normal in $G$, $N[i]$ is normal in $G[i]$ for every $i\in \N$. In particular, $N[n]$ is normal in $G[n]$. Set $\H=G[n]/N[n]$ and 
	$\f\colon G\to \H^\N,\ (g_0,g_1,\ldots)\mapsto (\overline{(g_i,g_{i+1},\ldots,g_{i+n})})_{i\in\N}$. Then $\f$ is a morphism of expansive profinite groups with kernel $N$. Set $U=\f^{-1}(1\times \H\times\H\times\ldots)$ and $\overline{U}=U/N\subseteq G/N$. Then $\overline{U}$ is an open subgroup of $G/N$ and if $g\in G$ is such that $\overline{g}\in \bigcap_{i\in \N}\overline{\s}^{-i}(\overline{U})$, i.e., $\s^i(g)\in U$  for all $i\in \N$, then $\s^i(\f(g))\in 1\times \H\times\H\times\ldots$ for all $i$ and thus $\f(g)=(1,1\ldots)$. Therefore $g\in N$ and $\bigcap_{i\in \N}\overline{\s}^{-i}(\overline{U})=1$.	
\end{proof}

\begin{defi}
An \emph{expansive subgroup} of an expansive profinite group is	a closed $\s$\=/stable subgroup.  
\end{defi}
By Lemma \ref{lemma: expansive subgroups and quotients} (i) an expansive subgroup $H$ of an expansive profinite group $G$ is an expansive group in its own right. If $H$ is normal in $G$ we will speak of a normal expansive subgroup.

\begin{prop}[Isomorphism theorems for expansive profinite groups] \label{prop: isomorphosm theorems} \mbox{} 
	\begin{enumerate} 
		\item Let $\f\colon G\to H$ be a morphism of expansive profinite groups. Then $\f(G)$ is an expansive subgroup of $H$, $\ker(\f)$ is a normal expansive subgroup of $G$ and  the canonical map $G/\ker(\f)\to \f(G)$ is an isomorphism of expansive profinite groups.
		\item Let $N$ be a normal expansive subgroup of an expansive group $G$ and $\pi\colon G\to G/N$ the canonical map. Then the map
		$$\{\text{expansive subgroups of $G$ containing $N$}\}\longrightarrow\{\text{expansive subgroups of $G/N$}\},$$ $H\mapsto \pi(H)=H/N$
		is a bijection with inverse $H'\mapsto \pi^{-1}(H')$. Moreover $H$ is normal in $G$ if and only if $H/N$ is normal in $G/N$ and in that case $G/H\simeq (G/N)/(H/N)$.
		\item Let $H$ and $N$ be expansive subgroups of an expansive profinite group $G$ such that $H$ normalizes $N$. Then $HN$ is an expansive subgroup of $G$, $H\cap N$ is a normal expansive subgroup of $H$ and $HN/N\simeq H/H\cap N$.
	\end{enumerate}
	
\end{prop}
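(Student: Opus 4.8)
The plan is to reduce everything to the classical isomorphism theorems for abstract groups and then to verify that the maps and subgroups they produce are compatible with both the topology and the endomorphism $\s$. The only two genuinely topological facts needed are that a continuous homomorphism of profinite groups is closed and that a continuous bijection of compact Hausdorff spaces is a homeomorphism; compatibility with $\s$ will always be an immediate consequence of the defining commuting square of a morphism. Lemma~\ref{lemma: expansive subgroups and quotients} does most of the work: once a subgroup is shown to be closed and $\s$-stable it is automatically an expansive subgroup, and once such a subgroup is also normal the corresponding quotient is again expansive.

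For part (i) I would first note that $\f(G)$ is closed, because a continuous homomorphism between profinite groups is closed, and $\s$-stable, because $\s(\f(G))=\f(\s(G))\subseteq\f(G)$; hence $\f(G)$ is an expansive subgroup of $H$. Dually, $\ker(\f)$ is closed, normal, and $\s$-stable (if $\f(g)=1$ then $\f(\s(g))=\s(\f(g))=1$), so it is a normal expansive subgroup of $G$. The induced map $G/\ker(\f)\to\f(G)$ is an abstract isomorphism by the classical first isomorphism theorem, is continuous, and commutes with $\s$; since source and target are compact Hausdorff, this continuous bijection is a homeomorphism, and therefore an isomorphism of expansive profinite groups.

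Parts (ii) and (iii) then follow, with (i) used as a black box. For the correspondence in (ii) the abstract correspondence theorem already supplies the bijection $H\mapsto H/N$ with inverse $H'\mapsto\pi^{-1}(H')$; I would only need to check that it restricts to expansive subgroups, i.e.\ that $\pi(H)=H/N$ is closed (image of a compact set under the continuous $\pi$) and $\overline{\s}$-stable, and conversely that $\pi^{-1}(H')$ is closed and $\s$-stable, using $\pi\circ\s=\overline{\s}\circ\pi$. The normality statement is classical, and the isomorphism $G/H\simeq(G/N)/(H/N)$ comes for free by applying (i) to the composite morphism $G\to G/N\to(G/N)/(H/N)$, whose kernel is $H$. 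For (iii) the one new topological point is that $HN$ is closed: it is the image of the compact set $H\times N$ under the continuous multiplication map, hence compact, hence closed; it is a subgroup because $H$ normalizes $N$, and $\s$-stable since $\s(HN)=\s(H)\s(N)\subseteq HN$. The intersection $H\cap N$ is closed, $\s$-stable, and normal in $H$, and the isomorphism $HN/N\simeq H/(H\cap N)$ follows by applying (i) to the surjective morphism $H\to HN/N$, $h\mapsto hN$, whose kernel is $H\cap N$.

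The main obstacle here is not algebraic at all: it is keeping track of the topological hypotheses, and in particular the closedness of $HN$ in (iii), which genuinely uses compactness of profinite groups rather than any group-theoretic manipulation. Everything else is routine bookkeeping of the commuting squares that express compatibility with $\s$.
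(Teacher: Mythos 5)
Your proof is correct and follows essentially the same route as the paper: the paper's proof simply cites the isomorphism theorems for profinite groups (whose key topological ingredient it identifies, exactly as you do, as the fact that any morphism of profinite groups is a closed map) and notes that the constructions are compatible with $\s$. You have merely written out in full the details that the paper delegates to the citation and to the phrase ``one immediately verifies,'' including the compactness argument for the closedness of $HN$ and the reduction of (ii) and (iii) to part (i).
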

\begin{proof}
	The isomorphism theorems hold for profinite groups. See e.g., \cite[Section 1.2]{FriedJarden:FieldArithmetic}. (The key observation here is that any morphism of profinite groups is a closed map.) One immediately verifies that the relevant constructions are compatible with the endomorphism~$\s$.
\end{proof}

\begin{defi}
	A \emph{subnormal series} of an expansive profinite $G$ is a sequence
	\begin{equation} \label{eq: subnormal series}
	G=G_0\supseteq G_1\supseteq G_2\supseteq \ldots\supseteq G_n=1
	\end{equation}	
	of expansive subgroups $G_i$ of $G$ such that $G_{i+1}$ is a normal expansive subgroup of $G_i$ for $i=0,\ldots,n-1$.
	Another subnormal series 
		\begin{equation}\label{eq: subnormal series 2}
		G=H_0\supseteq H_1\supseteq H_2\supseteq \ldots\supseteq H_m=1	\end{equation}
	is a \emph{refinement} of (\ref{eq: subnormal series}) if $\{H_0,\ldots,H_m\}\subseteq \{G_0,\ldots,G_n\}$. 
	
	Two subnormal series (\ref{eq: subnormal series}) and (\ref{eq: subnormal series 2}) are \emph{equivalent} if $m=n$ and there exists a permutation $\pi$ such that $G_i/G_{i+1}$ is isomorphic to $H_{\pi(i)}/H_{\pi(i)+1}$ for $i=0,\ldots,n-1$.	
\end{defi}
We will sometimes omit the first group $G=G_0$ and the last group $G_n=1$ in our notations for a subnormal series for $G$.

Similarly to the isomorphism theorems, the Schreier refinement theorem carries over in a straight forward fashion from the category of abstract groups to the category of expansive profinite groups.
For the sake of completeness we include a sketch of the proof (see e.g., \cite[Theorem 5.11]{Rotman:AnIntroductionToTheTheoryOfGroups}).

\begin{lemma} \label{lemma: Zassenhaus}
	Let $N_1, H_1, N_2, H_2$ be expansive subgroups of an expansive profinite group such that $N_i$ is a normal subgroup of $H_i$ for $i=1,2$. Then $N_1(H_1\cap N_2)$ is a normal expansive subgroup of $N_1(H_1\cap H_2)$, $N_2(N_1\cap H_2)$ is a normal expansive subgroup of $N_2(H_1\cap H_2)$ and 
	$$\frac{N_1(H_1\cap H_2)}{N_1(H_1\cap N_2)}\simeq\frac{N_2(H_1\cap H_2)}{N_2(N_1\cap H_2)}.$$	
\end{lemma}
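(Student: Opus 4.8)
The plan is to recognize this as the Zassenhaus ``butterfly'' lemma and to reduce it entirely to the already-established isomorphism theorems for expansive profinite groups (Proposition~\ref{prop: isomorphosm theorems}). The statement is purely group-theoretic in form, so the strategy is to run the classical proof verbatim, checking at each step that every group produced is a \emph{closed} $\s$-stable subgroup and that every isomorphism invoked is in fact an isomorphism of expansive profinite groups. Since Proposition~\ref{prop: isomorphosm theorems} already packages the three isomorphism theorems in the expansive category, and since products, intersections, and normalizer conditions among closed $\s$-stable subgroups again yield closed $\s$-stable subgroups, the hard work has essentially been done and only bookkeeping remains.

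First I would set $D=H_1\cap H_2$ and record that, since $N_1\trianglelefteq H_1$ and $N_2\trianglelefteq H_2$, the subgroups $H_1\cap N_2$ and $N_1\cap H_2$ are normal in $D=H_1\cap H_2$; these are all expansive subgroups because intersections of closed $\s$-stable subgroups are closed and $\s$-stable. Next I would verify the normalizing hypotheses needed to apply Proposition~\ref{prop: isomorphosm theorems}(iii): the group $N_1$ normalizes $H_1\cap H_2$ (as $N_1\trianglelefteq H_1$ and $H_1\cap H_2\le H_1$), so by part (iii) the product $N_1(H_1\cap H_2)$ is an expansive subgroup. The same reasoning shows $N_1(H_1\cap N_2)$ is an expansive subgroup, and a routine check (the standard Zassenhaus computation) shows it is normal in $N_1(H_1\cap H_2)$. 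By symmetry the analogous claims hold with the roles of the indices $1$ and $2$ exchanged.

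The core of the argument is the isomorphism itself. The classical route is to show that both quotients in the displayed formula are isomorphic to the common ``middle'' quotient
\begin{equation*}
\frac{H_1\cap H_2}{(H_1\cap N_2)(N_1\cap H_2)}.
\end{equation*}
Concretely, one applies the third isomorphism theorem (Proposition~\ref{prop: isomorphosm theorems}(iii)) with the pair $H=H_1\cap H_2$ and $N=N_1(H_1\cap N_2)\cap(H_1\cap H_2)$ to obtain
\begin{equation*}
\frac{N_1(H_1\cap H_2)}{N_1(H_1\cap N_2)}\simeq\frac{H_1\cap H_2}{(H_1\cap N_2)(N_1\cap H_2)},
\end{equation*}
after identifying the intersection $N_1(H_1\cap N_2)\cap(H_1\cap H_2)$ with $(H_1\cap N_2)(N_1\cap H_2)$. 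The symmetric computation gives the same middle quotient for $N_2(H_1\cap H_2)/N_2(N_1\cap H_2)$, and composing the two isomorphisms yields the claim. Throughout, every isomorphism is supplied by Proposition~\ref{prop: isomorphosm theorems}, hence is automatically an isomorphism of expansive profinite groups.

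I expect the main obstacle to be purely notational rather than conceptual: the identity $N_1(H_1\cap N_2)\cap(H_1\cap H_2)=(H_1\cap N_2)(N_1\cap H_2)$ is the one genuinely nontrivial set-theoretic manipulation in the Zassenhaus lemma (it is where the normality hypotheses are actually used, via a Dedekind-modular-law argument), and getting the index bookkeeping symmetric in $1$ and $2$ is fiddly. But since the excerpt explicitly says the proof is only a sketch and refers to \cite[Theorem 5.11]{Rotman:AnIntroductionToTheTheoryOfGroups}, I would not grind through these verifications in full; I would instead state that the constructions are the standard ones and emphasize only that each intermediate object is a closed $\s$-stable subgroup and each isomorphism respects $\s$, so that the abstract-group proof transfers intact to the expansive profinite category.
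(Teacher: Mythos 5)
Your proposal is correct and follows essentially the same route as the paper: both reduce the normality statements to the abstract-group case (Rotman), then apply Proposition~\ref{prop: isomorphosm theorems}(iii) with $H_1\cap H_2$ normalizing $N_1(H_1\cap N_2)$, identify $(H_1\cap H_2)\cap N_1(H_1\cap N_2)$ with $(H_1\cap N_2)(N_1\cap H_2)$ to reach the common middle quotient, and finish by symmetry in the indices. The only blemish is a notational slip where you describe the input to part (iii) as ``$N=N_1(H_1\cap N_2)\cap(H_1\cap H_2)$'' rather than $N=N_1(H_1\cap N_2)$, but your displayed isomorphism and subsequent identification make the intended (correct) application clear.
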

\begin{proof}
	The statement about normality holds for abstract groups (\cite[Lemma 5.10]{Rotman:AnIntroductionToTheTheoryOfGroups}), so it also holds in our context. As $H_1\cap H_2$ normalizes $N_1(H_1\cap N_2)$ it follows from Proposition \ref{prop: isomorphosm theorems} (iii) that 
	
	\begin{equation} \label{eq: isom}
	\frac{(H_1\cap H_2)N_1(H_1\cap N_2)}{N_1(H_1\cap N_2)}\simeq \frac{H_1\cap H_2}{(H_1\cap H_2)\cap N_1(H_1\cap N_2)}
	\end{equation}
	as expansive profinite groups. But $(H_1\cap H_2)N_1(H_1\cap N_2)=N_1(H_1\cap H_2)$ and $(H_1\cap H_2)\cap N_1(H_1\cap N_2)=(H_1\cap N_2)(N_1\cap H_2)$. Thus (\ref{eq: isom})
	simplifies to
	$$\frac{N_1(H_1\cap H_2)}{N_1(H_1\cap N_2)}\simeq \frac{H_1\cap H_2}{(H_1\cap N_2)(N_1\cap H_2)}.$$
	By symmetry (exchanging the indices $1$ and $2$) also 
	$$\frac{N_2(H_1\cap H_2)}{N_2(N_1\cap H_2)}\simeq \frac{H_1\cap H_2}{(H_1\cap N_2)(N_1\cap H_2)}$$
	and the claim follows.	
\end{proof}

\begin{prop} \label{prop: schreier refinement}
Any two subnormal series of an expansive profinite group have equivalent refinements. 
\end{prop}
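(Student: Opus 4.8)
The plan is to follow the classical Schreier refinement argument verbatim, substituting the Zassenhaus lemma for expansive profinite groups (Lemma~\ref{lemma: Zassenhaus}) in place of its abstract-group counterpart. Since the isomorphism theorems (Proposition~\ref{prop: isomorphosm theorems}) and the Zassenhaus lemma have already been established in the expansive category, the entire standard proof carries over with essentially no modification; the real content has been front-loaded into Lemma~\ref{lemma: Zassenhaus}.

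Concretely, start with two subnormal series
\begin{equation*}
G=G_0\supseteq G_1\supseteq\ldots\supseteq G_n=1 \quad\text{and}\quad G=H_0\supseteq H_1\supseteq\ldots\supseteq H_m=1.
\end{equation*}
For each $i\in\{0,\ldots,n-1\}$ and $j\in\{0,\ldots,m\}$ I would define the interpolating subgroups
\begin{equation*}
G_{i,j}=G_{i+1}(G_i\cap H_j),
\end{equation*}
so that $G_{i,0}=G_i$ and $G_{i,m}=G_{i+1}$, and symmetrically $H_{j,i}=H_{j+1}(H_j\cap G_i)$ with $H_{j,0}=H_j$, $H_{j,n}=H_{j+1}$. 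Each $G_{i,j}$ is an expansive subgroup: it is a product of two expansive subgroups, one of which (namely $G_{i+1}$) is normalized by the other (namely $G_i\cap H_j$, which lies in $G_i$ and hence normalizes the normal subgroup $G_{i+1}$ of $G_i$), so Proposition~\ref{prop: isomorphosm theorems}~(iii) applies. Applying Lemma~\ref{lemma: Zassenhaus} with $N_1=G_{i+1}$, $H_1=G_i$, $N_2=H_{j+1}$, $H_2=H_j$ shows that $G_{i,j+1}$ is normal in $G_{i,j}$, that $H_{j,i+1}$ is normal in $H_{j,i}$, and that
\begin{equation*}
G_{i,j}/G_{i,j+1}\simeq H_{j,i}/H_{j,i+1}
\end{equation*}
as expansive profinite groups. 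Thus the refined series obtained from the $G_{i,j}$ (reading off $nm$ factors) and the one obtained from the $H_{j,i}$ are both refinements of the original series, they have the same length $nm$, and the displayed isomorphism gives a bijection between their factors, hence the two refinements are equivalent.

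The main (and only genuine) obstacle is verifying that all the interpolating subgroups $G_{i,j}$ and $H_{j,i}$ are in fact expansive subgroups and that the normality and isomorphism assertions hold \emph{in the expansive category} rather than merely for the underlying abstract groups. This is exactly what is secured by the fact that Lemma~\ref{lemma: Zassenhaus} and Proposition~\ref{prop: isomorphosm theorems} were stated and proved for expansive profinite groups, with their isomorphisms being isomorphisms of expansive profinite groups. Once one grants these, the combinatorial bookkeeping—that deleting repetitions still leaves refinements of the original series and that the matching of factors respects the permutation required by the definition of equivalence—is entirely routine and identical to the abstract case (see \cite[Theorem~5.11]{Rotman:AnIntroductionToTheTheoryOfGroups}). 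For this reason I would present the argument as a brief sketch, emphasizing the construction of the $G_{i,j}$ and the single application of Lemma~\ref{lemma: Zassenhaus}, and referring to the standard reference for the remaining details.
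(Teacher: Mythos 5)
Your proposal is correct and follows essentially the same route as the paper: the paper's proof also defines $G_{i,j}=G_{i+1}(G_i\cap H_j)$ and $H_{j,i}=H_{j+1}(G_i\cap H_j)$ and concludes by a single application of Lemma~\ref{lemma: Zassenhaus} that $G_{i,j}/G_{i,j+1}\simeq H_{j,i}/H_{j,i+1}$. Your additional check that the interpolating subgroups are expansive (via Proposition~\ref{prop: isomorphosm theorems}~(iii)) is a detail the paper leaves implicit, but the argument is the same.
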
	
\begin{proof}
Let 
	\begin{equation} \label{eq: subnormal 1}
		G=G_0\supseteq G_1\supseteq \ldots\supseteq  G_n=1
	\end{equation}
and 
\begin{equation} \label{eq: subnormal 2}
G=H_0\supseteq H_1\supseteq \ldots \supseteq H_m=1
\end{equation}
be subnormal series for an expansive profinite group $G$. Setting $G_{i,j}=G_{i+1}(G_i\cap H_j)$ for $i=0,\ldots,n-1$ and $j=0,\ldots,m$ yields a refinement of (\ref{eq: subnormal 1}). Similarly, setting $H_{j,i}=H_{j+1}(G_i\cap H_j)$ for $j=0,\ldots,m-1$ and $i=0,\ldots,n$ yields a refinement of (\ref{eq: subnormal 2}). By Lemma~\ref{lemma: Zassenhaus} we have $G_{i,j}/G_{i,j+1}\simeq H_{j,i}/H_{j,i+1}$ and so the two refinements are equivalent.
\end{proof}

\subsection{The limit degree of an expansive profinite group}

The limit degree of an expansive profinite group is the topological entropy but conveniently transformed so that it always is an integer. It is a rough measure for the ``seize'' of an expansive profinite group. In this section we provide some basic properties of this numerical invariant. The main decomposition theorem (Theorem \ref{theo: main}) will be proved by induction on the limit degree.

\begin{defi}
	Let $G$ be an expansive profinite group. As the topological entropy of subshifts of finite type is invariant under isomorphism, we can define $\ld(G)$, the \emph{limit degree} of $G$ as $\exp(h(G'))$, where $G'$ is any one-sided group shift isomorphic to $G$. 
\end{defi}

Note that by Corollary \ref{cor: entropy and limit degree} the limit degree is a positive integer. Moreover, it has the following interpretation:

\begin{lemma} \label{lemma: ld}
	Let $G$ be a one-sided group shift on a finite group $\G$. Then the sequence $(|\ker(G[i]\xrightarrow{\pi_i} G[i-1])|)_{i\geq 1}$ is non-increasing and stabilizes with value $\ld(G)$. If $n$ is the smallest integer such that $|\ker(G[i]\xrightarrow{\pi_i} G[i-1])|=\ld(G)$, then $G$ is $n$-step.
\end{lemma}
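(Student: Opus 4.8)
The plan is to identify the kernels $\ker(\pi_i)$ with the follower-set subgroups $\G_i$ introduced in the proof of Proposition~\ref{prop: group shift is of finite type}, so that the whole statement becomes a repackaging of facts already extracted there, together with the entropy computation of Corollary~\ref{cor: entropy and limit degree}. Concretely, $\ker(\pi_i)$ consists exactly of the elements of $G[i]$ of the shape $(1,\ldots,1,g)$, and by definition the set of admissible last coordinates $g$ is $\G_i$; hence $|\ker(\pi_i)|=|\G_i|$ for every $i\ge 1$.

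First I would recall from that proof that the shift homomorphisms $\s_i\colon G[i]\to G[i-1]$ map $\ker(\pi_i)$ injectively into $\ker(\pi_{i-1})$ by sending $(1,\ldots,1,g)\mapsto(1,\ldots,1,g)$, which yields the inclusion $\G_i\subseteq\G_{i-1}$ and hence a \emph{descending} chain $\G_0\supseteq\G_1\supseteq\cdots$ of subgroups of the finite group $\G$. This immediately gives that $(|\ker(\pi_i)|)_{i\ge1}=(|\G_i|)_{i\ge1}$ is non-increasing, and, since $\G$ is finite, the chain stabilizes at $\G'=\bigcap_{i}\G_i$. To identify the stable value with $\ld(G)$ I would invoke Corollary~\ref{cor: entropy and limit degree}: its proof shows $h(G)=\log|\G'|$, whence $\ld(G)=\exp(h(G))=|\G'|$. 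Thus the non-increasing sequence stabilizes precisely at $\ld(G)$.

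For the final assertion, note that $\G_i\supseteq\G'$ gives $|\ker(\pi_i)|=|\G_i|\ge|\G'|=\ld(G)$ for all $i$, with equality if and only if $\G_i=\G'$ (both being finite). Hence the smallest $n$ with $|\ker(\pi_n)|=\ld(G)$ is exactly the smallest index (necessarily $\ge 1$, since the sequence is indexed from $i=1$) from which the chain is constant equal to $\G'$; this is the integer $n$ singled out in the proof of Proposition~\ref{prop: group shift is of finite type}. That proof already constructs the subgroup $\mathcal{H}=G[n]\le\G^{n+1}$ and shows that $G$ equals the group shift of all sequences whose length-$(n+1)$ blocks lie in $\mathcal{H}$, i.e.\ that $G$ is $n$-step. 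I expect the only delicate point to be bookkeeping: making sure the minimal index defined here coincides with the stabilization index used earlier (and keeping in mind that the earlier chain is genuinely descending, so that ``non-increasing'' is the correct monotonicity for $|\ker(\pi_i)|$), after which no further computation is needed.
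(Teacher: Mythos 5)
Your proposal is correct and takes essentially the same route as the paper: the paper's own proof of this lemma is literally the one-line citation ``clear from the proof of Proposition~\ref{prop: group shift is of finite type} and Corollary~\ref{cor: entropy and limit degree}'', and your write-up fills in exactly those details --- the identification $\ker(\pi_i)=\{1\}^i\times\G_i$, the descending chain $\G_0\supseteq\G_1\supseteq\cdots$ stabilizing at $\G'$ with $\ld(G)=|\G'|$, and the observation that the minimal $n$ with $|\ker(\pi_n)|=\ld(G)$ coincides (by monotonicity) with the stabilization index at which Proposition~\ref{prop: group shift is of finite type} shows $G$ is $n$-step. You also read the monotonicity in the correct direction ($\G_i\subseteq\G_{i-1}$), consistent with the ``descending chain'' assertion in that proof.
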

\begin{proof}
	This is clear from the proof of Proposition \ref{prop: group shift is of finite type} and Corollary \ref{cor: entropy and limit degree}.
\end{proof}

\begin{ex} \label{ex: ld of full shift}
	Let $G=\G^\N$ be the full one-sided group shift on a finite simple group, then $\ld(G)=|\G|$.
\end{ex}

Note that the limit degree of a one-sided group shift $G$ can also be describes as $\ld(G)=\lim_{i\to\infty}\frac{|G[i]|}{|G[i-1]|}$. The smallest value $\ld(G)$ can take is $1$. The following lemma shows that this happens if and only if $G$ is finite.

\begin{lemma} \label{lemma: finite if and only if ld 1}
	Let $G$ be an expansive profinite group. Then $\ld(G)=1$ if and only if $G$ is finite. 
\end{lemma}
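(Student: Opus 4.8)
The plan is to prove both implications using the characterization of $\ld(G)$ from Lemma~\ref{lemma: ld}, after reducing to the case where $G$ is an honest one-sided group shift on some finite group $\G$ (which is harmless by Lemma~\ref{lemma: expansive profinite group is group shift}, since both finiteness and the limit degree are isomorphism invariants).

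First I would dispose of the easy direction. Suppose $G$ is finite. Then the chain of images $G[0]\supseteq$-cardinalities is eventually constant: more precisely, the surjections $\pi_i\colon G[i]\to G[i-1]$ are, for $i$ large, isomorphisms because $G$ itself is a finite set and the maps $G\to G[i]$ must stabilize once $i$ exceeds the ``memory'' of the finitely many sequences in $G$. Hence $|\ker(\pi_i)|=1$ for all large $i$, and by Lemma~\ref{lemma: ld} this stable value is exactly $\ld(G)$, giving $\ld(G)=1$. Concretely, since $G$ is finite it is a finite subset of $\G^\N$, so there is an $N$ such that the projection $\G^\N\to\G^{N+1}$ is already injective on $G$; then $G\to G[i]$ is bijective for all $i\geq N$, forcing $\ker(\pi_i)=1$.

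For the converse, assume $\ld(G)=1$. By Lemma~\ref{lemma: ld} there is a smallest $n$ with $|\ker(\pi_i)|=1$ for all $i\geq n$, and $G$ is $n$-step. The key point is that each $\pi_i\colon G[i]\to G[i-1]$ is then an isomorphism for $i\geq n$, so by the product formula $|G[i]|=|G[n-1]|$ is constant for all $i\geq n-1$. Since $G$ embeds into the inverse limit $\varprojlim_i G[i]$ (indeed $G=\varprojlim_i G[i]$, as $G$ is the closed shift space recovered from its blocks), and this inverse system eventually consists of isomorphisms between finite groups of bounded order, the limit is finite; concretely $|G|=\lim_i|G[i]|=|G[n-1]|<\infty$. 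Thus $G$ is finite.

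The main obstacle, and the place where I would be most careful, is justifying that $\ld(G)=1$ genuinely forces the stabilized kernels to be trivial \emph{simultaneously} with controlling $|G|$ rather than merely $|G[i]|$. The clean way to handle this is to observe that $G$, being a closed subgroup of the profinite group $\G^\N$, is the inverse limit of the finite groups $G[i]$ along the transition maps $\pi_i$; once these transition maps are eventually bijective, the inverse limit is (isomorphic to) the common finite group $G[n-1]$, which immediately yields finiteness of $G$. Everything else is bookkeeping with the cardinality identity $|G[i]|=|G[i-1]|\cdot|\ker(\pi_i)|$ already recorded in the proof of Corollary~\ref{cor: entropy and limit degree}.
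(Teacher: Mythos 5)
Your proof is correct and follows essentially the same route as the paper: reduce to a one-sided group shift, note that finiteness forces the projections $G\to G[i]$ to be eventually injective (hence trivial kernels and $\ld(G)=1$ via Lemma~\ref{lemma: ld}), and conversely use that $\ld(G)=1$ makes the maps $\pi_i$ eventually isomorphisms, so that $G=\varprojlim_i G[i]$ is finite. The paper's forward direction uses the specific embedding $g\mapsto(g,\s(g),\s^2(g),\ldots)$ with the underlying group as alphabet, but this is only a cosmetic difference from your argument.
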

\begin{proof}
	If $G$ is finite, we can consider the underlying finite group $\G$ obtained from $G$ by forgetting $\s$. Then $\f\colon G\to \G^\N,\ g\mapsto (g,\s(g),\s^2(g),\ldots)$ identifies $G$ with a one-sided group shift of finite type with limit degree $1$. 
	
	Conversely, assume that $G$ is an expansive profinite with $\ld(G)=1$. We may assume that $G$ is a one-sided group shift on a finite group $\G$. Then, using the notation of the proof of Proposition~\ref{prop: group shift is of finite type} and Corollary \ref{cor: entropy and limit degree}, we have $|G[i]|=|\G_0|\cdot|\G_1|\ldots|\G_{i}|=|\G_0|\ldots|\G_{n-1}|$ for $i\geq n$. In particular, $m:=|G[i]|$ is independent of $i$ for $i\gg 0$. Since $G$ is the projective limit of the $G[i]$'s, it follows that $|G|=m$. 
\end{proof}

The following lemma shows that the limit degree is multiplicative.

\begin{lemma} \label{lemma: ld multiplicative on quotients}
	Let $G$ be an expansive profinite group and $N$ a normal expansive subgroup. Then $\ld(N)$ divides $\ld(G)$ and $\ld(G/N)=\frac{\ld(G)}{\ld(N)}$.
\end{lemma}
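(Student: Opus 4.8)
The plan is to reduce to the case of a one-sided group shift and then compare the finite ``block groups'' $G[i]$, $N[i]$ and their quotients. By Lemma~\ref{lemma: expansive profinite group is group shift} we may assume that $G\leq\G^\N$ is a one-sided group shift on a finite group $\G$. Then $N$, being a closed $\s$-stable subgroup of $\G^\N$, is itself a one-sided group shift on $\G$, and by Corollary~\ref{cor: every group shift is Nstep} it is an $n$-step group shift of finite type for some $n\geq 1$. Since $N$ is normal in $G$, its image $N[i]$ in $\G^{i+1}$ is a normal subgroup of $G[i]$ for every $i\in\N$, so the finite quotient $G[i]/N[i]$ is defined and $|G[i]|=|N[i]|\cdot|G[i]/N[i]|$.

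The main step is to realize $G/N$ as a one-sided group shift and to identify its block groups. Using the construction in the proof of Lemma~\ref{lemma: expansive subgroups and quotients}, $G/N$ is isomorphic to the image $Q=\f(G)\subseteq\H^\N$ of the morphism $\f\colon G\to\H^\N$, $(g_0,g_1,\ldots)\mapsto(\overline{(g_j,\ldots,g_{j+n})})_{j\in\N}$, where $\H=G[n]/N[n]$ and the bar denotes the class modulo $N[n]$. For $0\le j\le i$ the $j$-th coordinate of $\f(g)$ depends only on $g_0,\ldots,g_{i+n}$, so $g\mapsto(\f(g)_0,\ldots,\f(g)_i)$ factors through a surjective homomorphism
$$
G[i+n]\twoheadrightarrow Q[i],\qquad (g_0,\ldots,g_{i+n})\mapsto\big(\overline{(g_0,\ldots,g_n)},\ldots,\overline{(g_i,\ldots,g_{i+n})}\big).
$$
Its kernel consists of those $(g_0,\ldots,g_{i+n})\in G[i+n]$ all of whose $(n+1)$-windows lie in $N[n]$. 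By the ``windows'' characterization of block groups established in the proof of Proposition~\ref{prop: group shift is of finite type}, applied to the $n$-step shift $N$, this set is exactly $N[i+n]$. Hence $Q[i]\cong G[i+n]/N[i+n]$ and $|Q[i]|=|G[i+n]|/|N[i+n]|$.

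It remains to combine the counts and pass to the limit. Writing $\ld(G/N)=\ld(Q)=\lim_{i\to\infty}|Q[i]|/|Q[i-1]|$ and using the identification above,
$$
\frac{|Q[i]|}{|Q[i-1]|}=\frac{|G[i+n]|/|N[i+n]|}{|G[i+n-1]|/|N[i+n-1]|}=\frac{|G[i+n]|/|G[i+n-1]|}{|N[i+n]|/|N[i+n-1]|}.
$$
By Lemma~\ref{lemma: ld} the ratios $|G[j]|/|G[j-1]|$ and $|N[j]|/|N[j-1]|$ are eventually constant with values $\ld(G)$ and $\ld(N)$ respectively, so the right-hand side equals $\ld(G)/\ld(N)$ for large $i$. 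This yields $\ld(G)=\ld(N)\cdot\ld(G/N)$; since all three quantities are positive integers (Corollary~\ref{cor: entropy and limit degree}), we conclude that $\ld(N)$ divides $\ld(G)$ and $\ld(G/N)=\ld(G)/\ld(N)$.

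The one genuinely nontrivial point, and the place where care is needed, is the kernel computation $\ker(G[i+n]\to Q[i])=N[i+n]$. This rests on the fact that for the $n$-step shift $N$ the block group $N[j]$ (for $j\geq n$) consists of precisely those length-$(j+1)$ blocks all of whose $(n+1)$-windows lie in $N[n]$, which is exactly the content of the inductive argument in the proof of Proposition~\ref{prop: group shift is of finite type}. Everything else is routine bookkeeping with orders of finite groups.
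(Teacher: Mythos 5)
Your proposal follows the same route as the paper's proof: reduce to a one\-/sided group shift (Lemma~\ref{lemma: expansive profinite group is group shift}), choose $n$ with $N$ an $n$-step shift (Corollary~\ref{cor: every group shift is Nstep}), pass to the morphism $\f\colon G\to (G[n]/N[n])^\N$ from the proof of Lemma~\ref{lemma: expansive subgroups and quotients}, identify the block groups of its image with $G[n+i]/N[n+i]$, and let the ratios telescope. In fact you supply more detail than the paper, which asserts the isomorphism $\f(G)[i]\simeq G[n+i]/N[n+i]$ without comment; you correctly single out the kernel identity $\ker(G[i+n]\to Q[i])=N[i+n]$ as the crux.

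The one gap is in how you justify that identity. The statement you need is: every word $w\in\G^{j+1}$ ($j\geq n$) all of whose $(n+1)$-windows lie in $N[n]$ actually occurs as a block of an element of $N$. This is \emph{not} what the inductive argument in the proof of Proposition~\ref{prop: group shift is of finite type} establishes. That induction compares $G[i]$ with $G'[i]$, where $G'[i]$ is by definition the set of blocks \emph{occurring in elements of} the infinite shift $G'$; it never considers the a priori larger set of finite words with admissible windows, and its key step uses only the easy direction (blocks of elements of $G'$ have all windows in $\mathcal{H}$), not the converse you are invoking. The word-level statement is a genuine extension property and needs its own (short) argument, for instance: the last window $(w_{j-n},\ldots,w_j)$ of $w$ lies in $N[n]$, so it occurs as a block of some $x\in N$, say $\s^k(x)\in N$ begins with $(w_{j-n},\ldots,w_j)$; let $y\in\G^\N$ agree with $w$ in positions $0,\ldots,j$ and with $\s^k(x)$ thereafter, so that $\s^{j-n}(y)=\s^k(x)$. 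Every $(n+1)$-window of $y$ is then either a window of $w$ or a window of $\s^k(x)\in N$, hence lies in $N[n]$; since $N$ is $n$-step and one may take the defining subgroup to be $N[n]$ itself (the conditions ``all windows in $N[n]$'' and ``all windows in $\mathcal{H}$'' cut out the same subgroup of $\G^\N$), it follows that $y\in N$, and $w$, being a prefix of $y$, lies in $N[j]$. With this supplement inserted, your proof is complete and coincides with the paper's argument, making explicit exactly the step the paper leaves implicit.
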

\begin{proof}
	By Lemma \ref{lemma: expansive profinite group is group shift} we may assume that $G$ is a one-sided group shift on a finite group $\G$. Let $n\in\N$ be such that $N$ is an $n$-step group shift of finite type (Corollary \ref{cor: every group shift is Nstep}).
	The morphism $$\f\colon G\to (G[n]/N[n])^\N,\ (g_0,g_1,\ldots)\mapsto (\overline{(g_i,g_{i+1},\ldots,g_{i+n})})_{i\in\N}$$
	of expansive profinite groups has kernel $N$ and induces, for every $i\in\N$, an isomorphism
	$G[n+i]/N[n+i]\simeq \f(G)[i]$. Therefore
	$$\ld(G/N)=\ld(\f(G))=\lim_{i\to\infty}\frac{|\f(G)[i]|}{|\f(G)[i-1]|}=\lim_{i\to\infty}\frac{\frac{|G[n+i]|}{|N[n+i]|}}{\frac{|G[n+i-1]|}{|N[n+i-1]|}}=\lim_{i\to\infty}\frac{\frac{|G[n+i]|}{|G[n+i-1]|}}{\frac{|N[n+i]|}{|N[n+i-1]|}}=\frac{\ld(G)}{\ld(N)}.$$
\end{proof}


Every expansive profinite group is isomorphic to $G_\Gamma$ for some directed group graph $\Gamma$ (Definition \ref{def: group graph}). For later use, we record a slightly more precise statement.

\begin{lemma} \label{lemma: exists standard embedding}
	Let $G$ be an expansive profinite group. Then there exists a finite group $\H$ and a $1$-step group shift $H\subseteq \H^\N$ such that $G$ is isomorphic to $H$, $H[0]=\H$ and $\ld(G)=|\ker(H[1]\xrightarrow{\pi_1} H[0])|$. In particular, $G\simeq G_\Gamma$, for some directed group graph $\Gamma$.
\end{lemma}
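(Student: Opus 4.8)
The strategy is the classical higher-block recoding from symbolic dynamics, transported to the group setting. First I would invoke Lemma~\ref{lemma: expansive profinite group is group shift} to assume that $G$ is a one-sided group shift on a finite group $\G$. By Lemma~\ref{lemma: ld} I may fix the smallest integer $n\geq 1$ with $|\ker(G[n]\xrightarrow{\pi_n} G[n-1])|=\ld(G)$; for this $n$ the shift $G$ is $n$-step, and by the argument in the proof of Proposition~\ref{prop: group shift is of finite type} it is defined by the subgroup $G[n]\leq \G^{n+1}$, i.e., $G$ consists of exactly those sequences all of whose length-$(n+1)$ blocks lie in $G[n]$.

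Next I would introduce the recoding. Set $\H=G[n-1]\leq \G^n$, the group of length-$n$ blocks occurring in $G$, and define
$$\f\colon G\to \H^\N,\quad (g_0,g_1,\ldots)\mapsto \big((g_i,g_{i+1},\ldots,g_{i+n-1})\big)_{i\in\N}.$$
This is a continuous group homomorphism commuting with $\s$ (shifting $g$ shifts the block sequence), and it is injective since $g_i$ is the first entry of the $i$-th block. As in Lemma~\ref{lemma: expansive profinite group is group shift}, a continuous homomorphism of profinite groups is closed, so $\f$ is an isomorphism onto the one-sided group shift $H=\f(G)\subseteq\H^\N$. Evaluating at the $0$-th coordinate shows $H[0]=G[n-1]=\H$, since $(g_0,\ldots,g_{n-1})$ ranges over all of $G[n-1]$.

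The heart of the proof is to verify that $H$ is the $1$-step group shift on $\H$ defined by the subgroup $H[1]\leq\H\times\H$; this is where the $n$-step hypothesis is used, and I expect it to be the main obstacle. Clearly every element of $H$ has all its length-$2$ blocks in $H[1]$. Conversely, suppose $(w_0,w_1,\ldots)\in\H^\N$ has all length-$2$ blocks in $H[1]$. For each $i$, the membership $(w_i,w_{i+1})\in H[1]$ produces some $g\in G$ realizing this block; comparing the overlapping entries forces $w_i$ and $w_{i+1}$ to share their last, resp.\ first, $n-1$ coordinates, so there is a well-defined sequence $a=(a_0,a_1,\ldots)\in\G^\N$ with $w_i=(a_i,\ldots,a_{i+n-1})$ for all $i$. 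The same realizing element $g$ exhibits the length-$(n+1)$ block $(a_i,\ldots,a_{i+n})$ as an element of $G[n]$. As this holds for every $i$ and $G$ is $n$-step, we get $a\in G$ and $\f(a)=(w_0,w_1,\ldots)$. Hence $H$ is exactly this $1$-step group shift, and therefore $G\simeq H=G_\Gamma$ for the directed group graph $\Gamma$ with vertex set $\H$ and edge set $H[1]$. The delicate point here is that local admissibility of all length-$2$ blocks, combined with the defining $n$-step condition of $G$, must be shown to force the recovered sequence $a$ to be a genuine element of $G$.

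Finally, for the limit-degree claim I would compare the gradings of $H$ and $G$. The recoding identifies a length-$(i+1)$ block of $\f(g)$ with the length-$(i+n)$ block $(g_0,\ldots,g_{i+n-1})$ of $g$, giving group isomorphisms $H[i]\simeq G[i+n-1]$ that intertwine the projections $\pi$. In particular $\ker(H[1]\xrightarrow{\pi_1}H[0])\simeq\ker(G[n]\xrightarrow{\pi_n}G[n-1])$, which by the choice of $n$ has order $\ld(G)=\ld(H)$. This yields $\ld(G)=|\ker(H[1]\xrightarrow{\pi_1}H[0])|$, completing the proof, the remaining verifications being a routine transfer of the classical higher-block recoding into the profinite group category.
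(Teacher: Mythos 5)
Your proposal is correct and takes essentially the same approach as the paper: reduce to a group shift, take the minimal $n$ from Lemma~\ref{lemma: ld}, perform a higher-block recoding, use the $n$-step property to show the image is a $1$-step group shift, and identify $\ker(H[1]\xrightarrow{\pi_1}H[0])$ with $\ker(G[n]\xrightarrow{\pi_n}G[n-1])$. The only (immaterial) difference is the window size: you take the alphabet $\H=G[n-1]$ (blocks of length $n$), while the paper takes $\H=G[n]$ (blocks of length $n+1$); both choices yield the stated conclusion.
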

\begin{proof}
	It is well-known (\cite[p. 27]{Kitchens:SymbolicDynamics}) that every one-sided subshift of finite type is isomorphic to a $1$-step subshift of finite type (via a higher block representation). We follow a similar idea here. 
	
	By Lemma \ref{lemma: expansive profinite group is group shift} we can assume that $G$ is a one-sided group shift on a finite group $\G$. 
%
%
Let $n$ be the smallest integer such that $|\ker(G[n]\xrightarrow{\pi_n}G[n-1])|=\ld(G)$. Then $G\leq \G^\N$ consists of exactly those sequences in $\G^\N$ that have all blocks of length $n+1$ inside $\H=G[n]$ (Lemma~\ref{lemma: ld}). Define a map $\f\colon G\to \H^\N$ by $\f(g)=(g_i,g_{i+1},\ldots,g_{i+n})_{i\in\N}$ for $g=(g_0,g_1,\ldots)\in G$. Then $\f$ is an injective morphism of group shifts. The image $H=\f(G)$ of $\f$ is the $1$-step group shift on $\H$ defined by the directed group graph with directed edges $E\leq \H\times \H$ given by $$E=\{((g_0,\ldots,g_n),(g'_0,\ldots,g'_n))\in\H\times \H|\ g_1=g'_0,\ldots,g_n=g'_{n-1}\}.$$
	So $\f\colon G\to H$ is an isomorphism. As the projection $G\to G[n]$ is surjective, we see that $H[0]=\H$. Moreover, the kernel of $\pi_1\colon H[1]=E\to H[0]=\H,\ (h_0,h_1)\mapsto h_0$ equals
	$\{1\}\times \ker(G[n]\xrightarrow{\pi_n}G[n-1])\leq\H\times\H$
	and thus has cardinality $\ld(G)$.
\end{proof}

The following corollary shows that an expansive endomorphism is rarely injective.

\begin{cor} \label{cor: s injective implies finite}
	Let $G$ be an expansive profinite group with $\s\colon G\to G$ injective. Then $G$ is finite.
\end{cor}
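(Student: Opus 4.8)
The plan is to show that $\ld(G)=1$, for then $G$ is finite by Lemma~\ref{lemma: finite if and only if ld 1}. By Lemma~\ref{lemma: expansive profinite group is group shift} I may replace $G$ by an isomorphic one-sided group shift; since an isomorphism of expansive profinite groups intertwines the two endomorphisms and preserves both injectivity of $\s$ and finiteness, I lose nothing by assuming $G\le\G^\N$ is a one-sided group shift on a finite group $\G$.

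The central claim is that injectivity of $\s\colon G\to G$ forces the shift maps $\s_i\colon G[i]\to G[i-1]$, $(g_0,\ldots,g_i)\mapsto(g_1,\ldots,g_i)$ (the finite-level analogues of $\s$ introduced in the proof of Proposition~\ref{prop: group shift is of finite type}) to be injective for all sufficiently large $i$. To prove this I would study the finite groups $\ker(\s_i)=\{(g_0,1,\ldots,1)\in G[i]\}$. The projection $\pi_{i+1}$ maps $\ker(\s_{i+1})$ injectively into $\ker(\s_i)$, so $(|\ker(\s_i)|)_{i\ge1}$ is non-increasing and eventually constant. I would then identify this stable value with $|\ker(\s)|$, where $\ker(\s)=\{(g_0,1,1,\ldots)\in G\}$: a block $(g_0,1,\ldots,1)$ that persists through the stable range extends, by a compactness argument in the profinite group $G$, to an honest element $(g_0,1,1,\ldots)$ of $G$, and conversely every element of $\ker(\s)$ truncates to such blocks. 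Hence, if $\s$ is injective, so that $\ker(\s)=1$, then $\ker(\s_i)=1$ for all large $i$, i.e.\ $\s_i$ is injective.

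To finish, I would compare the two maps $G[i]\to G[i-1]$. The map $\pi_i$ is surjective, so $|G[i]|\ge|G[i-1]|$, while injectivity of $\s_i$ gives $|G[i]|\le|G[i-1]|$; therefore $|G[i]|=|G[i-1]|$ for all large $i$ and $\ld(G)=\lim_{i\to\infty}|G[i]|/|G[i-1]|=1$. Equivalently, the stabilized kernel $\ker(\pi_i)$, whose order is $\ld(G)$ by Lemma~\ref{lemma: ld}, is trivial. Thus $G$ is finite.

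I expect the main obstacle to be the middle step: passing from global injectivity of $\s$ on the profinite group $G$ to injectivity of the finite quotients $\s_i$. The subtlety is that a finite block $(g_0,1,\ldots,1)$ occurring in $G$ need not a priori come from an element of $\ker(\s)$; it is precisely the stabilization of the kernels $\ker(\s_i)$ together with compactness of $G$ that closes this gap. Everything else is routine bookkeeping with the invariant $\ld$.
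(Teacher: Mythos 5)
Your argument is correct, but it is not the paper's. The paper proves this corollary via the graph presentation of Lemma \ref{lemma: exists standard embedding}: writing $G=G_\Gamma$ for a directed group graph $\Gamma$ in which (after pruning) every vertex lies on an infinite directed path, injectivity of $\s$ says exactly that no vertex of $\Gamma$ has more than one incoming edge; this forces $\Gamma$ to be a disjoint union of directed cycles, so $G$ consists of finitely many periodic points. You instead stay with an arbitrary shift presentation $G\le\G^\N$ and argue on the block groups $G[i]$: the kernels $\ker(\s_i)$ have non-increasing order, and their stable order equals $|\ker(\s)|$ because, in the stable range, the connecting maps $\ker(\s_{i+1})\to\ker(\s_i)$ are injections between finite sets of equal cardinality, hence bijections, so every stable block $(g_0,1,\ldots,1)$ lifts coherently to all levels and the resulting sequence $(g_0,1,1,\ldots)$ lies in $G$ because $G$ is closed in $\G^\N$ (equivalently, $G=\varprojlim G[i]$). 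This is exactly the compactness point you flag, and your treatment of it is sound; the rest ($\s_i$ injective and $\pi_i$ surjective force $|G[i]|=|G[i-1]|$ for large $i$, hence $\ld(G)=1$, hence $G$ finite by Lemma \ref{lemma: finite if and only if ld 1}) is routine, as you say. Comparing the two: the paper's proof is shorter because Lemma \ref{lemma: exists standard embedding} has already done the work of reducing to a $1$-step shift, and it gives a crisp structural picture (a union of cycles); your proof needs only Lemmas \ref{lemma: expansive profinite group is group shift}, \ref{lemma: ld} and \ref{lemma: finite if and only if ld 1}, avoids the $1$-step reduction altogether, and, pushed one line further (using $|\ker(\pi_i)|\le|\ker(\s_i)|$ instead of the two cardinality inequalities), yields the quantitative refinement $\ld(G)\le|\ker(\s)|$, of which the corollary is the case $\ker(\s)=1$.
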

\begin{proof}
	By Lemma \ref{lemma: exists standard embedding} we can assume that $G=G_\Gamma$ for some directed group graph $\Gamma$. Without loss of generality, we can assume that every vertex of $\Gamma$ is contained in an infinite directed path. Then $\s$ is injective on $G$ if and only if there is no vertex with more than one incoming directed edge. This implies that $\Gamma$ is a disjoint union of directed cycles. But then $G$ is finite. 
\end{proof}

We will have use for a version of Lemma \ref{lemma: exists standard embedding} that simultaneously works for an expansive subgroup.

\begin{lemma} \label{lemma: simultaneously one-step}
	Let $G$ be an expansive profinite group and $H\leq G$ an expansive subgroup. Then there exists a finite group $\G$ and an injective morphism $\f\colon G\to \G^\N$ of expansive profinite groups such that $\f(G)\leq\G^\N$ and $\f(H)\leq\G^\N$ are $1$-step group shifts of finite type.
\end{lemma}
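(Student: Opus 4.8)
The plan is to mimic the higher-block construction of Lemma~\ref{lemma: exists standard embedding}, but to run it at a single block length that is large enough to linearize both $G$ and $H$ at once. First I would use Lemma~\ref{lemma: expansive profinite group is group shift} to realize $G$ as a one-sided group shift on some finite group $\G_0$. Since $H$ is a closed $\s$-stable subgroup of $G\leq\G_0^\N$, it is automatically a one-sided group shift on the \emph{same} alphabet $\G_0$. By Corollary~\ref{cor: every group shift is Nstep}, $G$ is an $n_1$-step group shift of finite type and $H$ is an $n_2$-step group shift of finite type. The key elementary observation is that an $m$-step group shift is also $m'$-step for every $m'\geq m$ (enlarging the defining subgroup from $G[m]$ to $G[m']$ imposes exactly the same constraints). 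Hence, setting $n=\max(n_1,n_2)$, both $G$ and $H$ are $n$-step with respect to the common alphabet $\G_0$.

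Next I would introduce the $n$-block alphabet $\G=\G_0^{n+1}$ and the $n$-block map
$$\f\colon \G_0^\N\to\G^\N,\quad (g_0,g_1,\ldots)\mapsto\big((g_i,g_{i+1},\ldots,g_{i+n})\big)_{i\in\N}.$$
Exactly as in the proof of Lemma~\ref{lemma: exists standard embedding}, $\f$ is a group homomorphism that commutes with $\s$ and is injective (the original sequence is recovered from the first coordinates), so its restriction to $G$ is an injective morphism of expansive profinite groups $\f\colon G\to\G^\N$, and $\f(H)\leq\f(G)\leq\G^\N$.

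Finally I would check that both $\f(G)$ and $\f(H)$ are $1$-step. For a sequence $(b_i)_{i\in\N}\in\G^\N$, write $b_i=(b_i^{(0)},\ldots,b_i^{(n)})\in\G_0^{n+1}$. Such a sequence lies in the image $\f(\G_0^\N)$ precisely when consecutive symbols overlap, i.e. $b_i^{(j+1)}=b_{i+1}^{(j)}$ for all $i$ and $0\leq j\leq n-1$; and in that case the underlying sequence in $\G_0^\N$ has $(b_i)_i$ as its list of length-$(n+1)$ blocks. Because $G$ is $n$-step with defining subgroup $G[n]\leq\G_0^{n+1}$, that underlying sequence lies in $G$ if and only if every $b_i\in G[n]$. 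Both requirements (overlap together with $b_i,b_{i+1}\in G[n]$) are local constraints on consecutive pairs, so $\f(G)$ is exactly the $1$-step group shift on $\G$ defined by the subgroup
$$\mathcal{E}_G=\{(b,b')\in G[n]\times G[n]\mid b^{(j+1)}=b'^{(j)},\ 0\leq j\leq n-1\}\leq\G\times\G.$$
The identical argument with $H[n]$ in place of $G[n]$ shows $\f(H)$ is the $1$-step group shift defined by the analogous subgroup $\mathcal{E}_H$. The only real content of the proof is the opening reduction: simultaneity is achieved solely by choosing one block length $n$ that dominates the step-lengths of both $G$ and $H$, after which the single higher-block map linearizes both shifts at once. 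I expect no genuine obstacle beyond bookkeeping, since once $n$ dominates both step-lengths the $1$-step verification for $H$ is word-for-word that for $G$.
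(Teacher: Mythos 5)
Your proof is correct and follows essentially the same route as the paper: realize $G$ (and hence $H$) as a group shift on a common alphabet, take $n$ to dominate both step-lengths via Corollary~\ref{cor: every group shift is Nstep}, and apply the $n$-block map as in Lemma~\ref{lemma: exists standard embedding}. The only cosmetic difference is that the paper uses $\G=G[n]$ as the new alphabet rather than all of $\G_0^{n+1}$, and your write-up usefully makes explicit two points the paper leaves implicit (that an $m$-step shift is $m'$-step for $m'\geq m$, and the $1$-step verification for $\f(H)$).
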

\begin{proof}
	By Lemma \ref{lemma: expansive profinite group is group shift} we can assume that $G$ is a one-sided group shift on a finite group $\H$. By Corollary \ref{cor: every group shift is Nstep} there exists an $n_G\in\N$ such that $G$ is $n_G$-step. Similarly, there exists an $n_H\in\N$ such that $H$ is $n_H$-step. Let $n$ be the maximum of $n_G$ and $n_H$. Then $G$ and $H$ are both $n$-step.
	
	Set $\G=G[n]$. Then $\f\colon G\to \G^\N,\ (g_0,g_1,\ldots)\mapsto ((g_i,g_{i+1},\ldots,g_{i+n}))_{i\in\N}$ has the desired property (cf. proof of Lemma \ref{lemma: exists standard embedding}). 
\end{proof}

\subsection{The $\sigma$-identity component}

Since profinite groups are totally disconnected, the connected component containing the identity is always trivial. However, requiring the closed sets of an expansive profinite group to also be $\s$-stable, leads to an interesting notion of identity component with properties somewhat analogous to the identity component of an algebraic group. The $\s$-identity component is important for the main decomposition theorem (Theorem \ref{theo: main}) because it yields the first group in the subnormal series.


Some considerations in this section have some similarity with \cite[Section 5.1]{Kitchens:SymbolicDynamics} and \cite[Section 4.4]{LindMarcus:IntroductionToSymbolicDynamisAndCoding}. However, our approach is different and guided by topology.

Let $X$ be a topological space equipped with a continuous map $\s\colon X\to X$. Then the closed $\s$-stable subsets of $X$ satisfy the axioms for the closed sets of a topology. We call this topology on $X$ the \emph{$\s$-topology}. 

Recall that a connected component of a topological space is a maximal connected subset. The connected components are closed and the whole space is the disjoint union of its connected components.
A subset of $X$ that is connected or irreducible with respect to the $\s$-topology is called \emph{$\s$-connected} or \emph{$\s$-irreducible} respectively.
The connected components with respect to the $\s$-topology are called the \emph{$\s$-connected components}.

%

An infinite subshift of finite type has infinitely many connected components as the connected components are in fact the singletons. However, the following lemma shows that it only has finitely many $\s$-connected components. 

\begin{lemma} \label{lemma: sconnected components of subshift of finite type}
	Let $X$ be a one-sided subshift of finite type. Then $X$ has only finitely many $\s$-connected components. 
\end{lemma}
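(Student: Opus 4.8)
The plan is to detect $\s$-connected components through the eventual behaviour of paths, after reducing to a graph. By the higher block representation used in the proof of Lemma~\ref{lemma: exists standard embedding}, $X$ is isomorphic to a $1$-step subshift of finite type $X_\Gamma$ for a finite directed graph $\Gamma$, and this isomorphism is a homeomorphism commuting with $\s$, hence a homeomorphism for the $\s$-topology; so it preserves $\s$-connected components and we may work with $X=X_\Gamma$. Discarding the vertices that lie on no infinite forward path (they occur in no point of $X$), I would use the basic structural fact that the condensation of $\Gamma$ into strongly connected components is a finite acyclic graph: a forward path can change its component only finitely often, so every $x\in X$ eventually remains inside a single strongly connected component $S=T(x)$, necessarily one containing a cycle. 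Since $\s$ only deletes an initial segment, $T(\s(x))=T(x)$, so $T$ is constant on forward orbits and each fibre $T^{-1}(S)$ is $\s$-stable.

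The heart of the argument is to show that, for each strongly connected component $S$, the whole fibre $T^{-1}(S)$ lies in a single $\s$-connected component. Let $X_S\subseteq X$ be the closed $\s$-stable subshift of paths lying entirely inside $S$. Because $S$ is strongly connected, $X_S$ is topologically transitive and hence possesses a point $x_0$ with dense forward orbit; then $X_S$ equals the $\s$-closure of $\{x_0\}$, so it is $\s$-irreducible and therefore $\s$-connected. Now if $T(x)=S$, then $\s^n(x)\in X_S$ for all large $n$, so $X_S$ meets the $\s$-closure of $\{x\}$. The $\s$-closure of a single point is always $\s$-irreducible, hence $\s$-connected, and it contains both $x$ and a point of $X_S$; as a connected set lies in one connected component, $x$ lies in the same $\s$-connected component as $X_S$. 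Since this holds for every $x$ with $T(x)=S$, the fibre $T^{-1}(S)$ is contained in the $\s$-connected component of $X_S$.

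Finally, the fibres $T^{-1}(S)$ partition $X$ as $S$ ranges over the finitely many strongly connected components of $\Gamma$. By the previous paragraph each fibre sits inside one $\s$-connected component, so every $\s$-connected component is a union of fibres; as there are only finitely many fibres, there can be only finitely many $\s$-connected components (indeed at most the number of strongly connected components of $\Gamma$). The step requiring the most care is the heart above, namely the passage, inside the non-Hausdorff $\s$-topology, from dynamical transitivity of $X_S$ to its $\s$-connectedness, and the use of specialization (``$y$ lies in the $\s$-closure of $x$'' implies that $x$ and $y$ lie in a common $\s$-connected component) to absorb each path into the component of its terminal component. I do not expect genuine difficulty beyond assembling these observations, together with the elementary fact that in any topological space an irreducible subset, and hence the closure of a point, is connected.
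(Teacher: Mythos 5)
Your proof is correct, and it shares the paper's skeleton---reduce to a $1$-step shift $X_\Gamma$, pass to strongly connected components (SCCs), and obtain $\s$-irreducibility of the shift on an SCC from a point with dense forward orbit---but it finishes differently, and more economically. The paper's proof identifies the $\s$-connected components exactly: it forms the undirected graph $\Delta$ whose vertices are the SCCs with nonempty associated shift, two being adjacent when a directed path joins them, decomposes $X$ as a disjoint union $X_{\Theta_1}\uplus\ldots\uplus X_{\Theta_s}$ over the connected components of $\Delta$, and proves each $X_{\Theta_i}$ is a single $\s$-connected component; this last step requires an approximation argument (a point of $X_{\Gamma_{j'}}$ is a limit of points that eventually enter $\Gamma_j$, and components are closed). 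You skip all of that: you only show that each fibre $T^{-1}(S)$ of the eventual-SCC map lies in the single component containing $X_S$, using the specialization fact that the $\s$-closure of a point is $\s$-irreducible, so $x$ and $\s^n(x)$ always share a component (the paper uses the same fact, phrased via the $\s$-stability of $\s$-connected components). Since the fibres partition $X$ and there are at most as many of them as SCCs of $\Gamma$, finiteness follows. What your shortcut gives up is the precise description of the components, which the paper actually reuses later: the proof of Lemma \ref{lemma: sconnected component} (iii) cites this proof to conclude that $G_{\Gamma^o}$ is $\s$-connected. Your argument proves the lemma as stated, but that later appeal would need the paper's stronger version (or a small supplementary argument in the same spirit as your specialization step).
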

\begin{proof}
	Every one-sided subshift of finite type is isomorphic to a $1$-step subshift of finite type (cf. \cite[Proposition 2.3.9]{LindMarcus:IntroductionToSymbolicDynamisAndCoding}). We can thus assume that $X=X_\Gamma$ for a directed graph $\Gamma$.
	Without loss of generality, we assume that every vertex of $\Gamma$ lies on an infinite directed path.
	
	Let $\Gamma_1,\ldots,\Gamma_r$ denote the strongly connected components of $\Gamma$ whose associated subshift $X_{\Gamma_i}$ is non-empty. Note that the subshift associated to a strongly connected component is empty only if the component has a unique vertex and no directed edge (from the vertex to itself), and this only happens for vertices that do not lie on any directed circuit in $\Gamma$.

	We define an undirected graph $\Delta$ with set of vertices $\{\Gamma_1,\ldots,\Gamma_r\}$ and an edge between $\Gamma_i$ and $\Gamma_j$ if there exists a directed path in $\Gamma$ 
	that connects a vertex in $\Gamma_i$ to a vertex in $\Gamma_j$. Let $\Delta_1,\ldots,\Delta_s$ denote the connected components of $\Delta$. For $i=1,\ldots,s$ let $\Theta_i$ denote the full directed subgraph of $\Gamma$ whose vertices are all vertices of $\Gamma$ that can be connected with a directed path to a vertex belonging to some $\Gamma_j$, such that $\Gamma_j$ is a vertex of $\Delta_i$.

	An infinite directed path in $\Gamma$ might traverse from one strongly connected component to another. However, in that case, it can never return to this strongly connected component. Therefore, every infinite directed path in $\Gamma$ eventually stays in one $\Gamma_i$. This shows that every infinite directed path in $\Gamma$ lives inside a unique $\Theta_i$. Therefore $X=X_{\Theta_1}\uplus\ldots\uplus X_{\Theta_s}$. To complete the proof it suffices to show that $X_{\Theta_i}$ is $\s$-connected for $i=1,\ldots,s$.
		
	The closure of a $\s$-orbit is $\s$-irreducible. It follows that $X_\Gamma$ is $\s$-irreducible if $\Gamma$ is strongly connected because then $X_\Gamma$ contains a point with dense $\s$-orbit. To find such a point one constructs a directed path in $\Gamma$ that traverses every finite directed path in $\Gamma$ (cf. \cite[Theorem 1.4.1 (i)]{Kitchens:SymbolicDynamics}).
	
	It follows that $X_{\Gamma_j}$ is $\s$-irreducible and a fortiori $\s$-connected for every $j=1,\ldots,r$. Note that $X_{\Gamma_j}\subseteq X_{\Theta_i}$ for every $\Gamma_j$ that is a vertex of $\Delta_i$. For every $\Gamma_j$ that is a vertex of $\Delta_i$ there exists a unique $\s$-connected component $X_j$ of $X$ containing $X_{\Gamma_j}$ because $X_{\Gamma_j}$ is $\s$-connected. We will show that $X_j=X_{j'}$ for any $j,j'$ such that $\Gamma_j$ and $\Gamma_{j'}$ are vertices of $\Delta_i$. Because $\Delta_i$ is connected, it suffices to show this under the additional assumption that there exists a directed path in $\Gamma$ from a vertex in $\Gamma_{j'}$ to a vertex in $\Gamma_{j}$.
	
	Note that if a point $x\in X$ is such that $\s^m(x)\in X_{j}$ for some $m\in\N$, then $x\in X_j$ because the $\s$-connected component containing $x$ also contains $\s^m(x)$. Using this, we see that a given point $y\in X_{\Gamma_{j'}}$ can be approximated arbitrarily well with a point in $X_{j}$: Choose a directed path in $X_{\Gamma_{j'}}$ that agrees with the directed path corresponding to $y$ up to an arbitrary large index and then continue this directed path to an infinite directed path in $\Gamma$ that eventually stays in $\Gamma_j$. Because $X_j$ is closed, it follows that $y\in X_j$. So $y\in X_j\cap X_{j'}$ and therefore $X_j=X_{j'}$ as desired.
	
	Thus there exists a $\s$-connected component $Y$ of $X$ such that $X_{\Gamma_j}\subseteq Y$ for every $j$ such that $\Gamma_j$ a vertex of $\Delta_i$. Since every infinite directed path in $\Theta_i$ eventually stays in some $\Gamma_j$, with $\Gamma_j$ a vertex of $\Delta_i$, we see that for every $x\in X_{\Theta_i}$ there exists an $m\in \N$ with $\s^m(x)\in Y$. This shows that  $X_{\Theta_i}\subseteq Y$. From $X=X_{\Theta_1}\uplus\ldots\uplus X_{\Theta_s}$ we deduce that $X_{\Theta_i}=Y$ is $\s$-connected.
\end{proof}
%
We now return to groups:

\begin{lemma} \label{lemma: sconnected component}
	Let $G$ be an expansive profinite group. Then:
	\begin{enumerate}
		\item There are only finitely many $\s$-connected components in $G$.
		\item The $\s$-connected component $G^\sc$ of $G$ that contains $1$ is a normal expansive subgroup of $G$ such that $G/G^\sc$ is finite and $\s\colon G/G^\sc\to G/G^\sc$ is bijective.
		\item If $G=G_\Gamma$ for a directed group graph $\Gamma$, then the $\s$-connected component containing $1$ equals $G_{\Gamma^o}$, where $\Gamma^o$ is the full directed subgraph of $\Gamma$ whose set of vertices consists of all vertices of $\Gamma$ that can be connected to $1$ with a directed path. 
	\end{enumerate}

\end{lemma}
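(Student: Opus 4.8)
The plan is to dispose of (i) at once and to prove (ii) and (iii) together by writing down the component explicitly in graph-theoretic terms.

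For (i): by Lemma~\ref{lemma: expansive profinite group is group shift} and Proposition~\ref{prop: group shift is of finite type} the group $G$ is isomorphic to a one-sided subshift of finite type, so Lemma~\ref{lemma: sconnected components of subshift of finite type} applies verbatim. For (ii) and (iii) I will use Lemma~\ref{lemma: exists standard embedding} to assume $G=G_\Gamma$ for a directed group graph $\Gamma$ on a finite group $\G$; since vertices lying on no infinite directed path occur in no element of $G_\Gamma$, and the remaining vertices form the subgroup $G[0]$, I may assume every vertex lies on an infinite directed path. Let $\Gamma^o$ be as in the statement. The first task is to verify that $N:=G_{\Gamma^o}$ is a normal expansive subgroup, which is a routine consequence of $E\le\G\times\G$ being a subgroup containing the self-loop $(1,1)$: closure under multiplication follows by padding two paths into $1$ with the self-loop at $1$ to a common length and multiplying them vertexwise; closure under inversion and conjugation follows since $(a,b)\in E$ forces $(a^{-1},b^{-1})\in E$ and hence $(c a c^{-1},\, d b d^{-1})\in E$; and $\s$-stability and closedness are immediate, as shifting a path in $\Gamma^o$ stays in $\Gamma^o$ and $G_{\Gamma^o}$ is cut out by a vertex restriction.

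With $N$ a normal expansive subgroup, the equality $G^\sc=N$ will follow from two containments. For $G^\sc\subseteq N$ I pass to the quotient $Q=G/N$, which is expansive profinite by Lemma~\ref{lemma: expansive subgroups and quotients}. The crucial point is that the induced endomorphism $\overline\s$ on $Q$ is \emph{injective}: if $\s(g)\in N$ then every coordinate $g_i$ with $i\ge 1$ reaches $1$, and since $g_0\to g_1$ is an edge, $g_0$ reaches $1$ too, so $g\in N$. By Corollary~\ref{cor: s injective implies finite} this forces $Q$ to be finite, whence $\overline\s$ is a bijection. On a finite group with a bijective endomorphism the closed $\s$-stable sets are exactly the unions of $\overline\s$-cycles; as $\overline\s$ fixes $1$, the singleton $\{1\}$ is $\s$-clopen and the $\s$-connected component of $1$ in $Q$ is trivial. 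Because $G\to Q$ is continuous and $\s$-equivariant it is continuous for the $\s$-topologies and hence maps $G^\sc$ into this trivial component, giving $G^\sc\subseteq N$. This argument simultaneously yields the remaining assertions of (ii), namely that $Q=G/G^\sc$ is finite and that $\s$ acts bijectively on it. For the reverse containment $N\subseteq G^\sc$ I show that $G_{\Gamma^o}$ is itself $\s$-connected: inside $\Gamma^o$ every vertex reaches $1$, hence reaches the strongly connected component of $1$, so the construction in the proof of Lemma~\ref{lemma: sconnected components of subshift of finite type} yields a single piece and $G_{\Gamma^o}$ is $\s$-connected. Since it contains $1$, it lies in $G^\sc$.

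The main obstacle is the upper containment $G^\sc\subseteq G_{\Gamma^o}$. The tempting direct route is to match $\Gamma^o$ with the combinatorial piece $\Theta$ produced by Lemma~\ref{lemma: sconnected components of subshift of finite type}, but this is genuinely delicate: a priori $\Theta$ (vertices reaching the entire $\Delta$-cluster of $1$) may be strictly larger than $\Gamma^o$ (vertices reaching $1$), and equating them would require controlling reachability between distinct strongly connected components of a group graph. The quotient argument above sidesteps this completely, reducing the whole containment to the one-line edge computation that $\overline\s$ is injective on $G/G_{\Gamma^o}$, together with the finiteness supplied by Corollary~\ref{cor: s injective implies finite}. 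The only auxiliary fact to record is that a continuous $\s$-equivariant map is continuous for the $\s$-topologies and therefore preserves $\s$-connectedness, which is immediate from the definition of the $\s$-topology.
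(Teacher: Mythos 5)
Your proposal is correct and follows essentially the same route as the paper: reduce to $G=G_\Gamma$ via Lemma~\ref{lemma: exists standard embedding}, show the reachable-to-$1$ subgraph $\Gamma^o$ gives a normal expansive subgroup $G_{\Gamma^o}$ via path multiplication/conjugation, deduce $\s^{-1}(G_{\Gamma^o})=G_{\Gamma^o}$ so that $\s$ is injective on the quotient and Corollary~\ref{cor: s injective implies finite} yields finiteness and bijectivity, and identify $G_{\Gamma^o}$ as the $\s$-connected component of $1$ using the machinery of Lemma~\ref{lemma: sconnected components of subshift of finite type}. The only cosmetic difference is the final packaging: you pull back the $\s$-clopen singleton $\{1\}\subseteq G/G_{\Gamma^o}$ along the ($\s$-continuous) quotient map, while the paper partitions $G$ into preimages of the $\s$-orbits on the finite quotient; these are the same argument.
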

\begin{proof}
	Since every expansive profinite group is isomorphic to a group shift of finite type, (i) follows from Lemma \ref{lemma: sconnected components of subshift of finite type}.
	To establish (ii), by Lemma \ref{lemma: exists standard embedding}, we may assume that $G=G_\Gamma$ for some directed group graph $\Gamma$ on a finite group $\G$. Without loss of generality we assume that every vertex of $\Gamma$ is contained in an infinite directed path.

	Let $\G^o\subseteq \G$ denote the set of all vertices $g$ of $\Gamma$ such that there exists a directed path in $\Gamma$ starting at $g$ and ending at $1$. If $g_1,g_2\in \G^o$, then a directed path from $g_i$ to $1$ can be extended by adding the vertex $1$ at the end a certain number of times. The product of two such directed paths then yields a directed path from $g_1g_2$ to $1$. This shows that $\G^o$ is a subgroup of $\G$.
	
	To show that $\G^o$ is normal in $\G$, let $g\in \G^o$ and fix a directed path $\gamma$ from $g$ to $1$. Let $h\in\G$ and choose a directed path $\delta$ starting at $h$ with the same length as $\gamma$. Then $\delta\gamma\delta^{-1}$ is a directed path in $\Gamma$ from $hgh^{-1}$ to $1$. Thus $hgh^{-1}\in \G^o$ and $\G^o$ is normal in $\G$.
	
	Let $\Gamma^o$ denote the full directed subgraph of $\Gamma$ with vertex set $\G^o$. Because $\G^o$ is a normal subgroup of $\G$, we see that $G_{\Gamma^o}$ is a normal expansive subgroup of $G=G_\Gamma$. If $g\in \G$ is such that there exists a directed edge from $g$ to an element of $\G^o$, then $g\in\G^o$. This shows that $\s^{-1}(G_{\Gamma^o})=G_{\Gamma^o}$ and therefore $\s\colon G/G_{\Gamma^o}\to G/G_{\Gamma^o}$ is injective. It thus follows from Corollary~\ref{cor: s injective implies finite} that $G/G_{\Gamma^o}$ is finite and so $\s\colon G/G_{\Gamma^o}\to G/G_{\Gamma^o}$ is bijective. Thus $G/G_{\Gamma^o}$ is the disjoint union of $\s$-orbits $o_1,\ldots,o_r$. If $\pi\colon G\to G/G^\sc$ is the canonical map, then $G$ is the disjoint union of the closed $\s$-stable subsets $\pi^{-1}(o_1),\ldots,\pi^{-1}(o_r)$. Note that the identity element of $G/G^\sc$ is a $\s$-orbit, say $o_1$. Then $\pi^{-1}(o_1)=G_{\Gamma^o}$. From the proof of Lemma \ref{lemma: sconnected components of subshift of finite type} it is clear that $G_{\Gamma^o}$ is $\s$-connected. From $G=\pi^{-1}(o_1)\uplus\ldots\uplus\pi^{-1}(o_r)$ it follows that $G_{\Gamma^o}$ is the $\s$-connected component of $G$ containing $1$, i.e., $G^\sc=G_{\Gamma^o}$. This completes the proof of (ii) and~(iii).
\end{proof}

\begin{defi}
	Let $G$ be an expansive profinite group. The $\s$-connected component $G^\sc$ of $G$ containing $1$ is called the \emph{$\s$-identity component} of $G$. 
\end{defi}

By Lemma \ref{lemma: sconnected component} we know that $G^\sc$ is a normal expansive subgroup of $G$ such that $G/G^\sc$ is finite and $\s\colon G/G^\sc\to G/G^\sc$ is bijective. Note that an expansive profinite group is $\s$-connected if and only if it equals its $\s$-identity component.

\begin{ex} \label{ex: full shift is sconnected}
	A full one-sided group shift is $\s$-connected by Lemma \ref{lemma: sconnected component} (iii). 
\end{ex}

\begin{ex} \label{ex: main for finite}
	Let $G$ be an expansive profinite group that is finite.
	We claim that $G^\sc=\{g\in G|\ \exists \ n\in \N \colon \s^n(g)=1\}.$ Let $N$ denote the right hand side of this equation. Then $N$ is a normal $\s$-connected expansive subgroup of $G$. Moreover, $\s^{-1}(N)=N$, so $\s\colon G/N\to G/N$ is injective and therefore bijective. It follows that $G$ is the disjoint union of the $\s$-stable sets $\pi^{-1}(o)$, where $o$ is an orbit of $\s$ on $G/N$ and $\pi\colon G\to G/N$ the canonical map. From this we deduce that $N$ is the $\s$-connected component of $G$ containing $1$.
\end{ex}

The following simple example shows that a $\s$-connected expansive profinite group need not be $\s$-irreducible. However, it follows from Corollary \ref{cor: expansive group is full shift} that a $\s$-connected expansive profinite group $G$ with $\s\colon G\to G$ surjective is $\s$-irreducible.

\begin{ex}
	Let $G=\{1,h,h^2\}$ be the cyclic group with three elements and $\s\colon G\to G,\ g\mapsto 1$ the trivial endomorphism. Then $G$ is $\s$-connected but not $\s$-irreducible because $G$ is the union of the $\s$-closed sets $\{1,h\}$ and $\{1,h^2\}$.
\end{ex}

For the proof of the main decomposition theorem we need the following:

\begin{lemma} \label{lemma: Nsc is normal}
	Let $G$ be an expansive profinite group and $N$ a normal expansive subgroup of $G$. Then $N^\sc$ is normal in $G$.
\end{lemma}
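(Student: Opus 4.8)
The plan is to realize everything explicitly as group shifts and to argue with directed paths, because the tempting shortcut fails. That shortcut would be: conjugation $c_g\colon N\to N,\ n\mapsto gng^{-1}$ is an automorphism of $N$, and $N^\sc$ is intrinsically defined, so $c_g$ preserves it. The obstruction—and the main difficulty here—is that $c_g$ does \emph{not} commute with $\s$. Indeed $\s\circ c_g=c_{\s(g)}\circ\s$, so from $\s(c_g(Y))=c_{\s(g)}(\s(Y))$ one sees that $c_g$ need not carry $\s$-stable sets to $\s$-stable sets, and hence need not preserve $\s$-connected components. The intrinsic characterization of $N^\sc$ is invariant only under morphisms of expansive profinite groups, and $c_g$ is not one; so a direct appeal to invariance is unavailable.

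To circumvent this I would first pass to a common model. Applying Lemma~\ref{lemma: simultaneously one-step} yields a finite group $\G$ and an embedding realizing both $G\leq\G^\N$ and $N\leq\G^\N$ as $1$-step group shifts. Write $N=G_\Gamma$ for the directed group graph $\Gamma$ on $\G$ whose edge set is $N[1]\leq\G\times\G$, and let $\G^o\subseteq\G$ be the set of vertices $v$ admitting a directed path $v\to\cdots\to 1$ in $\Gamma$. By Lemma~\ref{lemma: sconnected component}~(iii), $N^\sc=G_{\Gamma^o}$, where $\Gamma^o$ is the full subgraph of $\Gamma$ on $\G^o$; concretely
$$N^\sc=\{(n_0,n_1,\ldots)\in N\mid n_i\in\G^o\ \text{ for all }\ i\}.$$

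The heart of the argument is a block-conjugation fact powered by normality. Because $N$ is normal in $G$, for any $(x,y)\in G[1]$ and $(a,b)\in N[1]$ one has $(xax^{-1},yby^{-1})\in N[1]$: choose $g'\in G$ and $n'\in N$ realizing these length-$2$ blocks and read off the first two coordinates of $g'n'(g')^{-1}\in N$. Now fix $g\in G$ and $n\in N^\sc$. Since $c_g(n)\in N$ by normality and $(c_g(n))_i=g_in_ig_i^{-1}$, it suffices to show $g_in_ig_i^{-1}\in\G^o$ for every $i$. Choose a directed path $n_i=w_0\to w_1\to\cdots\to w_L=1$ in $\Gamma$, and note that the consecutive blocks $(g_{i+j},g_{i+j+1})$ of $g$ all lie in $G[1]$. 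Applying the block-conjugation fact coordinatewise converts the path into a directed path
$$g_in_ig_i^{-1}=g_iw_0g_i^{-1}\to g_{i+1}w_1g_{i+1}^{-1}\to\cdots\to g_{i+L}w_Lg_{i+L}^{-1}=1$$
in $\Gamma$ (the endpoint is $1$ since $w_L=1$), whence $g_in_ig_i^{-1}\in\G^o$. This gives $c_g(N^\sc)\subseteq N^\sc$ for every $g\in G$; replacing $g$ by $g^{-1}$ yields the reverse inclusion, so $c_g(N^\sc)=N^\sc$ and $N^\sc$ is normal in $G$. The only genuinely delicate point is recognizing that the naive automorphism argument breaks down; once one passes to the graph model and exploits normality at the level of length-$2$ blocks, the remainder is a direct path-tracking computation.
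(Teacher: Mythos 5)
Your proof is correct and follows essentially the same route as the paper: reduce to a common $1$-step model via Lemma~\ref{lemma: simultaneously one-step}, identify $N^\sc$ via Lemma~\ref{lemma: sconnected component}~(iii) as the set of paths through vertices admitting a directed path to $1$, and use normality of $N$ in $G$ to conjugate such a path and stay inside the graph of $N$. The only cosmetic difference is that the paper conjugates whole infinite paths (first proving the vertex set of $N^\sc$ is normal in $G[0]$), whereas you conjugate a finite path edge-by-edge using consecutive blocks of $g$; the underlying mechanism is identical.
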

\begin{proof}
By Lemma \ref{lemma: simultaneously one-step} we may assume that $G\leq \G^\N$ such that $G$ and $N$ are $1$-step group shifts of finite type. Let $\Gamma$ be the directed group graph on $\G$ such that $G=G_\Gamma$. Then there is a directed subgraph $\Gamma'$ of $\Gamma$ such that $N=G_{\Gamma'}$.
 The set $\mathcal{N}'=N[0]$ of vertices of $\Gamma'$ is a normal subgroup of $G[0]\leq\G$ because $N$ is normal in $G$.
  Moreover, $N^\sc=G_{\Gamma''}$, where $\Gamma''$ is the full directed subgraph of $\Gamma'$ whose set of vertices $\mathcal{N}''$ consists of all elements of $\mathcal{N}'$ that can be connected with $1$ through a directed path inside $\Gamma'$ (Lemma \ref{lemma: sconnected component} (iii)).

We will show that $\mathcal{N}''$ is normal in $G[0]$. Let $n\in\mathcal{N}''$ and $g\in G[0]$. Consider an infinite directed path $\gamma$ in $\Gamma$ starting at $g$ and an infinite directed path $\delta$ in $\Gamma'$ that starts at $n$, goes to $1$ and then stabilizes at $1$. Because $N$ is normal in $G$, the directed path $\gamma\delta\gamma^{-1}$ lies inside $\Gamma'$. Moreover it starts at $gng^{-1}$ and ends at $1$. Therefore $gng^{-1}\in\mathcal{N}''$ and $\mathcal{N''}$ is normal in $G[0]$. This implies that $N^\sc=G_{\Gamma''}$ is normal in $G=G_{\Gamma}$. 
\end{proof}

\begin{lemma} \label{lemma: quotient of sconnected is sconnected}
	Let $\f\colon G\to H$ be a morphism of expansive profinite groups. If $G$ is $\s$-connected, then $\f(G)$ is $\s$-connected.
\end{lemma}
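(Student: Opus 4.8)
The plan is to upgrade $\f$ from an ordinary continuous map to a map that is continuous with respect to the $\s$-topologies, and then to invoke the elementary fact that a continuous image of a connected space is connected. Concretely, I would first record the following pullback property: if $C\subseteq H$ is closed and $\s$-stable, then $\f^{-1}(C)$ is closed and $\s$-stable in $G$. Closedness is immediate since $\f$ is continuous for the ordinary topologies. For $\s$-stability, take $x\in\f^{-1}(C)$; then $\f(\s(x))=\s(\f(x))\in C$ because $\f$ commutes with $\s$ and $C$ is $\s$-stable, so $\s(x)\in\f^{-1}(C)$. Hence $\f$ is continuous as a map between the $\s$-topological spaces $G$ and $H$.

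The heart of the argument is then a standard connectedness contradiction. Suppose $\f(G)$ were not $\s$-connected. Then we could write $\f(G)=A\uplus B$ with $A,B$ nonempty, disjoint, and closed $\s$-stable subsets of $\f(G)$. The one point requiring care is that $A$ and $B$ are closed not merely in $\f(G)$ but in $H$: this holds because $\f(G)$ is itself a closed (indeed expansive) subgroup of $H$ by Proposition~\ref{prop: isomorphosm theorems}~(i), so closed subsets of $\f(G)$ are closed in $H$ and the subspace $\s$-topology on $\f(G)$ coincides with its intrinsic $\s$-topology. Thus $A$ and $B$ are closed $\s$-stable subsets of $H$, and applying the pullback property gives that $\f^{-1}(A)$ and $\f^{-1}(B)$ are closed $\s$-stable in $G$. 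Since $\f\colon G\to\f(G)$ is surjective, these preimages are nonempty, disjoint, and cover $G$, contradicting the $\s$-connectedness of $G$. Therefore $\f(G)$ is $\s$-connected.

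I expect the only genuine obstacle to be bookkeeping around the two topologies: namely verifying that a separation of $\f(G)$ in its own $\s$-topology genuinely yields closed $\s$-stable subsets of $H$ whose $\f$-preimages separate $G$. The two facts that make this work are that $\f$ commutes with $\s$ (giving $\s$-stability of preimages) and that $\f(G)$ is closed in $H$ (reconciling the intrinsic and subspace $\s$-topologies); both are available from the isomorphism theorems and the definition of a morphism of expansive profinite groups.
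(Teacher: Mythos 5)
Your proof is correct and follows essentially the same route as the paper: the key step in both is that preimages of closed $\s$-stable sets under $\f$ are closed $\s$-stable, i.e., $\f$ is continuous for the $\s$-topologies, after which the paper simply cites the fact that continuous images of connected sets are connected, while you unfold that fact into an explicit separation argument. Your extra care in reconciling the intrinsic and subspace $\s$-topologies on $\f(G)$ (via closedness of $\f(G)$ in $H$) is a detail the paper leaves implicit, and it is handled correctly.
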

\begin{proof}
	If $V$ is a closed $\s$-stable subset of $H$, then $\f^{-1}(V)$ is a closed $\s$-stable subset of $G$. Thus $\f$ is $\s$-continuous, i.e., continuous with respect to the $\s$-topologies on $G$ and $H$. So the claim follows from the general fact that a continuous map sends connected subsets to connected subsets.
\end{proof}

\begin{lemma} \label{lemma: ld for sc}
	Let $G$ be an expansive profinite group. Then $\ld(G^\sc)=\ld(G)$. 
\end{lemma}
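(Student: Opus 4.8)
The plan is to combine the multiplicativity of the limit degree with the two structural facts about $G^\sc$ that have already been established. Recall from Lemma \ref{lemma: sconnected component} (ii) that $G^\sc$ is a \emph{normal} expansive subgroup of $G$ and that the quotient $G/G^\sc$ is \emph{finite}. These are precisely the two inputs needed to apply the previously proved results, so no new constructions are required.

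First I would apply Lemma \ref{lemma: finite if and only if ld 1} to the finite expansive profinite group $G/G^\sc$, which gives $\ld(G/G^\sc)=1$. Next, since $G^\sc$ is a normal expansive subgroup, Lemma \ref{lemma: ld multiplicative on quotients} applies and yields
$$\ld(G/G^\sc)=\frac{\ld(G)}{\ld(G^\sc)}.$$
Combining the two displays gives $\ld(G)/\ld(G^\sc)=1$, that is, $\ld(G^\sc)=\ld(G)$, as desired.

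There is essentially no obstacle here: the statement is an immediate corollary of the multiplicativity of $\ld$ (Lemma \ref{lemma: ld multiplicative on quotients}) together with the characterization of finiteness in terms of the limit degree (Lemma \ref{lemma: finite if and only if ld 1}). The only point that must be checked is that the hypotheses of these two lemmas---normality of $G^\sc$ in $G$ and finiteness of $G/G^\sc$---are both met, and both were supplied by Lemma \ref{lemma: sconnected component} (ii). Conceptually, the content is that passing from $G$ to $G^\sc$ only discards a finite piece (the finitely many $\s$-orbits of $G/G^\sc$ on which $\s$ acts bijectively), and removing a finite quotient cannot affect the topological entropy and hence cannot change the limit degree.
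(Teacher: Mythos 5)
Your proof is correct and follows exactly the paper's own argument: the paper likewise deduces $\ld(G/G^\sc)=1$ from Lemma \ref{lemma: sconnected component} and Lemma \ref{lemma: finite if and only if ld 1}, and then concludes via the multiplicativity of the limit degree (Lemma \ref{lemma: ld multiplicative on quotients}). Nothing is missing; the hypotheses you check (normality of $G^\sc$ and finiteness of $G/G^\sc$) are precisely the ones needed.
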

\begin{proof}
	Since $G/G^\sc$ is finite (Lemma \ref{lemma: sconnected component}), we know that $\ld(G/G^\sc)=1$ from Lemma~\ref{lemma: finite if and only if ld 1}. Thus the claim follows from Lemma \ref{lemma: ld multiplicative on quotients}.
\end{proof}

The following two lemmas are needed to prove the uniqueness of the group $G_1$ in the theorem stated in the introduction. The next lemma is a converse to Lemma \ref{lemma: sconnected component} (ii).

\begin{lemma} \label{lemma: sconnected converse}
	Let $N$ be a normal expansive subgroup of an expansive profinite group $G$ such that $G/N$ is finite and $\s\colon G/N\to G/N$ is bijective. Then $G^\sc\subseteq N$. Moreover, if $N$ is $\s$-connected, then $N=G^\sc$.
\end{lemma}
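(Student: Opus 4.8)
The plan is to push everything down to the finite quotient $G/N$ via the canonical projection $\pi\colon G\to G/N$, which is a morphism of expansive profinite groups by Lemma~\ref{lemma: expansive subgroups and quotients}, and then invoke the functoriality of $\s$-connectedness recorded in Lemma~\ref{lemma: quotient of sconnected is sconnected}. The key preliminary computation is the $\s$-identity component of the target. Since $G/N$ is finite, Example~\ref{ex: main for finite} gives $(G/N)^\sc=\{\overline g\mid \exists\, n\in\N\colon \overline{\s}^n(\overline g)=1\}$. But $\overline{\s}\colon G/N\to G/N$ is bijective, hence injective, so $\overline{\s}^n(\overline g)=1=\overline{\s}^n(1)$ forces $\overline g=1$. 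Therefore $(G/N)^\sc=\{1\}$.

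For the inclusion $G^\sc\subseteq N$, I would argue as follows. The subgroup $G^\sc$ is $\s$-connected (it is a $\s$-connected component, by Lemma~\ref{lemma: sconnected component}), so by Lemma~\ref{lemma: quotient of sconnected is sconnected} its image $\pi(G^\sc)$ is a $\s$-connected subset of $G/N$. As $\pi(1)=1\in\pi(G^\sc)$, this subset lies in the $\s$-connected component of $G/N$ containing $1$, namely $(G/N)^\sc=\{1\}$. Hence $\pi(G^\sc)=\{1\}$, which says exactly that $G^\sc\subseteq\ker(\pi)=N$.

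For the ``moreover'' part, assume $N$ is $\s$-connected. Here I would first note that the $\s$-topology carried by $N$ as an expansive profinite group coincides with the subspace $\s$-topology inherited from $G$: since $N$ is closed in $G$, the closed $\s$-stable subsets of $N$ are precisely the sets $C\cap N$ with $C$ closed and $\s$-stable in $G$. Thus $N$ is a $\s$-connected subset of $G$ containing $1$, and therefore it is contained in the $\s$-connected component of $G$ through $1$, which is $G^\sc$. Combining $N\subseteq G^\sc$ with the inclusion $G^\sc\subseteq N$ established above yields $N=G^\sc$.

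I do not expect a serious obstacle, since all the weight is carried by the two facts already in hand: the identification $(G/N)^\sc=\{1\}$ (which rests only on $\overline\s$ being injective) and the preservation of $\s$-connectedness under morphisms. The single point requiring a moment's care is the compatibility of the intrinsic and subspace $\s$-topologies on $N$ used in the last step, which is immediate from $N$ being closed in $G$.
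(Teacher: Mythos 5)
Your proof is correct, but it routes the argument differently from the paper. The paper works directly in $G$: since $\s$ is bijective on the finite quotient, the $\s$-orbits $o_1,\ldots,o_r$ on $G/N$ are disjoint, so $G$ is the disjoint union of the finitely many closed $\s$-stable sets $\pi^{-1}(o_i)$; as $N=\pi^{-1}(o_1)$ with $o_1=\{1\}$ the piece containing $1$, the component $G^\sc$ must lie in $N$, and if $N$ is $\s$-connected it is itself that component. You instead push everything down to the quotient: you identify $(G/N)^\sc=\{1\}$ from Example~\ref{ex: main for finite} together with injectivity of $\overline{\s}$, then invoke Lemma~\ref{lemma: quotient of sconnected is sconnected} to get $\pi(G^\sc)\subseteq (G/N)^\sc=\{1\}$, i.e.\ $G^\sc\subseteq N$, and obtain the reverse inclusion in the ``moreover'' part from maximality of connected components. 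The two arguments rest on the same underlying mechanism --- the disjoint orbit decomposition is exactly the proof of Example~\ref{ex: main for finite}, so in substance you have relocated it rather than avoided it --- but your version buys a cleaner dependency structure, reusing the finite case and the functoriality lemma instead of redoing the decomposition, at the cost of the intrinsic-versus-subspace $\s$-topology compatibility check. You handle that check correctly for $N$; note only that the same one-line remark is tacitly needed when you apply Lemma~\ref{lemma: quotient of sconnected is sconnected} to $G^\sc$, since that lemma requires $G^\sc$ to be $\s$-connected as an expansive profinite group in its own right, not merely as a subset of $G$. The paper's direct argument is self-contained and sidesteps this point of care entirely.
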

\begin{proof}
	Consider the canonical map $\pi\colon G\to G/N$ and let $o_1,\ldots,o_r$ denote the orbits of $\s$ on $G/N$. Because $G/N$ is finite, these $\s$-orbits are closed and because $\s$ is bijective on $G/N$ the orbits are disjoint. If follows that $G$ is the disjoint union of the closed  $\s$-stable subsets $\pi^{-1}(o_i)$.
	Since $N=\pi^{-1}(o_1)$, where $o_1=\{1\}$ is the $\s$-orbit of the identity of $G/N$, we see that $G^\sc\subseteq N$. If $N$ is $\s$-connected, then $\pi^{-1}(o_1)$ is a $\s$-connected component. So $N=G^\sc$.
\end{proof}

\begin{lemma} \label{lemma: sconnected in short exact sequence}
Let $G$ be an expansive profinite group with a normal expansive subgroup $N$ such that $N$ and $G/N$ are $\s$-connected. Then $G$ is $\s$-connected.
\end{lemma}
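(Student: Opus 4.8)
The plan is to prove that $G$ coincides with its own $\s$-identity component $G^\sc$. Since an expansive profinite group is $\s$-connected if and only if it equals its $\s$-identity component (the remark following Lemma~\ref{lemma: sconnected component}), establishing $G=G^\sc$ finishes the proof. The whole argument is a reduction to Lemma~\ref{lemma: sconnected converse}, the converse to Lemma~\ref{lemma: sconnected component}~(ii), applied in the quotient $G/N$.

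First I would observe that $N\subseteq G^\sc$. Here one uses that $N$ being $\s$-connected as an expansive profinite group is the same as $N$ being a $\s$-connected subset of $G$: because $N$ is closed and $\s$-stable in $G$, the subspace $\s$-topology that $N$ inherits from $G$ agrees with its intrinsic $\s$-topology. Thus $N$ is a $\s$-connected subset of $G$ containing $1$, and since $G^\sc$ is the maximal $\s$-connected subset of $G$ containing $1$ (equivalently, $N\cup G^\sc$ is $\s$-connected and contains $1$, so it must equal $G^\sc$), we conclude $N\subseteq G^\sc$.

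Next I would pass to the quotient. As $N\subseteq G^\sc$ and $G^\sc$ is a normal expansive subgroup of $G$ (Lemma~\ref{lemma: sconnected component}~(ii)), Proposition~\ref{prop: isomorphosm theorems}~(ii) shows that $G^\sc/N$ is a normal expansive subgroup of $G/N$ with $(G/N)/(G^\sc/N)\simeq G/G^\sc$. By Lemma~\ref{lemma: sconnected component}~(ii) this latter group is finite and $\s$ acts bijectively on it. Therefore the hypotheses of Lemma~\ref{lemma: sconnected converse}, applied inside the expansive profinite group $G/N$ to its normal expansive subgroup $G^\sc/N$, are met, and we obtain $(G/N)^\sc\subseteq G^\sc/N$. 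Since $G/N$ is $\s$-connected by assumption, $(G/N)^\sc=G/N$, and hence $G^\sc/N=G/N$, i.e.\ $G^\sc=G$.

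The argument is short, so the ``obstacle'' is conceptual rather than computational: one must recognize that $\s$-connectedness of $G$ is equivalent to $G=G^\sc$ and then set up the correct quotient so that Lemma~\ref{lemma: sconnected converse} applies. The one point requiring care is verifying that the pair $(G/N,\ G^\sc/N)$ genuinely satisfies the hypotheses of Lemma~\ref{lemma: sconnected converse} — namely that the cokernel $(G/N)/(G^\sc/N)$ is finite with $\s$ bijective — which is exactly what the isomorphism $(G/N)/(G^\sc/N)\simeq G/G^\sc$ together with Lemma~\ref{lemma: sconnected component}~(ii) provides. I do not expect the inclusion $N\subseteq G^\sc$ to cause trouble beyond the routine identification of the subspace $\s$-topology on the closed $\s$-stable subset $N$ with its intrinsic $\s$-topology.
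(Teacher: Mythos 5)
Your proof is correct and follows essentially the same route as the paper's: establish $N\subseteq G^\sc$, pass to $G/N$, and apply Lemma~\ref{lemma: sconnected converse} to the normal expansive subgroup $G^\sc/N$, whose cokernel is identified with $G/G^\sc$ (finite, $\s$ bijective by Lemma~\ref{lemma: sconnected component}). The only, harmless, difference is that you invoke just the inclusion part of Lemma~\ref{lemma: sconnected converse} together with the hypothesis $(G/N)^\sc=G/N$, whereas the paper invokes the ``moreover'' clause of that lemma, which costs it an extra appeal to Lemma~\ref{lemma: quotient of sconnected is sconnected} to see that $G^\sc/N$ is $\s$-connected.
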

\begin{proof}
	As $N$ is $\s$-connected and contains $1$, we have $N\subseteq G^\sc$. Thus $G^\sc/N$ is a normal expansive subgroup of $G/N$ with $(G/N)/(G^\sc/N)\simeq G/G^\sc$. So $(G/N)/(G^\sc/N)$ is finite and $\s\colon (G/N)/(G^\sc/N)\to (G/N)/(G^\sc/N)$ is bijective (Lemma \ref{lemma: sconnected component}).
	Because $G^\sc/N$ is $\s$\=/connected (Lemma \ref{lemma: quotient of sconnected is sconnected}) it follows from Lemma \ref{lemma: sconnected converse} that $G^\sc/N=(G/N)^\sc=G/N$. Therefore $G^\sc=G$.	
%
\end{proof}

%
%


\subsection{One-sided group shifts on finite simple groups}

The results in this subsection are needed for the uniqueness statement in the main decomposition theorem.

%
%
%

\begin{lemma} \label{lemma: normal subgroup of shift on simple group}
	Let $\G$ be a finite simple group and $G=\G^\N$ the full one-sided group shift on $\G$. If $N$ is a proper normal expansive subgroup of $G$, then $N$ is finite and $G/N$ is isomorphic to $\G^\N$.
\end{lemma}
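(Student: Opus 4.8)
The plan is to control $N$ through its follower sets and then to identify the quotient $G/N$ as a full shift. Write $\s$ for the shift and, as in the proof of Proposition~\ref{prop: group shift is of finite type}, let $N_k\subseteq\G$ be the follower sets of $N$, i.e. $N_0=N[0]$ and $\ker(\pi_k\colon N[k]\to N[k-1])=\{1\}^k\times N_k$ for $k\ge 1$. First I would show that every $N_k$ is a \emph{normal} subgroup of $\G$. For $N_0=N[0]$ this holds because $N[0]$ is the image of the normal subgroup $N$ under the surjective coordinate projection $G\to\G$. For $k\ge 1$, given $g\in N_k$ choose $x=(1,\dots,1,g,\ast,\ast,\dots)\in N$ with $k$ leading ones, and for $\gamma\in\G$ conjugate $x$ by the element $c\in G$ whose $k$-th coordinate is $\gamma$ and whose first $k$ coordinates are $1$; normality of $N$ forces $cxc^{-1}=(1,\dots,1,\gamma g\gamma^{-1},\ast,\dots)\in N$, so $\gamma g\gamma^{-1}\in N_k$. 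Since $\G$ is simple each $N_k\in\{1,\G\}$, and by the proof of Proposition~\ref{prop: group shift is of finite type} the chain $N_0\supseteq N_1\supseteq\cdots$ is descending, hence equal to $\G$ up to some index and to $1$ thereafter.

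Next I would deduce finiteness. If every $N_k=\G$ then $|N[i]|=\prod_{k\le i}|N_k|=|\G|^{i+1}=|G[i]|$, forcing $N=G$, contrary to $N$ being proper. Therefore the eventual value of the chain is $1$, i.e. $\ld(N)=1$, and $N$ is finite by Lemma~\ref{lemma: finite if and only if ld 1}. Discarding the trivial case $N=1$ (where the claim is immediate, as then $G/N=G=\G^\N$), let $m\ge 1$ be least with $N_m=1$; then $N_0=\dots=N_{m-1}=\G$, the group $N$ is $m$-step (Lemma~\ref{lemma: ld}), $|N|=|\G|^m$, and $\pi_m\colon N[m]\to N[m-1]=\G^m$ is an isomorphism. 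In particular $N[m]$ is a normal subgroup of $\G^{m+1}$ of order $|\G|^m$.

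For the quotient, set $Q=G[m]/N[m]$ and use the morphism $\f\colon G\to Q^\N$, $(g_0,g_1,\dots)\mapsto(\overline{(g_i,\dots,g_{i+m})})_{i\in\N}$, from the proof of Lemma~\ref{lemma: ld multiplicative on quotients}; it has kernel $N$, so $G/N\cong\f(G)$ by Proposition~\ref{prop: isomorphosm theorems}. That proof also gives $\f(G)[i]\cong G[m+i]/N[m+i]$, which has order $|\G|^{m+i+1}/|\G|^m=|Q|^{i+1}$; hence $\f(G)[i]=Q^{i+1}$ for every $i$ and therefore $\f(G)=Q^\N$ is the \emph{full} shift on $Q$. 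It remains to identify $Q\cong\G$. Writing $\pi_m\colon\G^{m+1}\to\G^m$ for the projection onto the first $m$ coordinates, we have $\ker\pi_m=\{1\}^m\times\G$; since $\pi_m$ maps $N[m]$ onto $\G^m$ we get $N[m]\cdot(\{1\}^m\times\G)=\G^{m+1}$, while $N[m]\cap(\{1\}^m\times\G)=\{1\}^m\times N_m=1$. As both factors are normal, $\G^{m+1}=N[m]\times(\{1\}^m\times\G)$ internally, so $Q=\G^{m+1}/N[m]\cong\{1\}^m\times\G\cong\G$. Consequently $G/N\cong Q^\N\cong\G^\N$.

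The conjugation computation and the cardinality bookkeeping are routine. I expect the genuine subtlety to be the final identification $Q\cong\G$. A tempting shortcut---building a defining homomorphism $G\to\G^\N$ for $N$ by measuring the failure of the recurrence cutting out $N$---breaks down when $\G$ is non-abelian, because the resulting defect map $g\mapsto(g_{k+m}\,\theta(g_k,\dots,g_{k+m-1})^{-1})_k$ is not multiplicative; and appealing to the classification of normal subgroups of $\G^{m+1}$ would split the argument into the abelian and non-abelian cases. The internal direct product decomposition $\G^{m+1}=N[m]\times(\{1\}^m\times\G)$ avoids both difficulties and treats all finite simple $\G$ uniformly.
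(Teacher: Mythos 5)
Your proof is correct, but it takes a genuinely different route from the paper. The paper splits into two cases: for $\G$ non-abelian it invokes the classification of normal subgroups of $\G^{n}$ (every such subgroup is a product of factors each equal to $1$ or $\G$, via a commutator/center argument), concluding the explicit form $N=\G^i\times 1\times 1\times\cdots$ and obtaining $G/N\simeq\G^\N$ from the $i$-fold shift map; for $\G$ abelian (hence of prime order) it runs the follower-set argument, where the dichotomy $\mathcal{N}_i\in\{1,\G\}$ is automatic and the identification $\G^{n+1}/N[n]\simeq\G$ comes for free from prime cardinality. Your argument erases this case split with two ingredients the paper does not use: (a) the observation that the follower sets $N_k$ are \emph{normal} in $\G$ -- proved by conjugating an element of $N$ with leading block $(1,\dots,1,g)$ by an element of the full shift supported in coordinate $k$, which is exactly where fullness of $G$ and normality of $N$ enter -- so that simplicity applies uniformly; and (b) the identification $Q=\G^{m+1}/N[m]\simeq\G$ via $N[m]\cdot(\{1\}^m\times\G)=\G^{m+1}$ and $N[m]\cap(\{1\}^m\times\G)=\{1\}^m\times N_m=1$, i.e.\ the second isomorphism theorem, which replaces both the prime-order count and the explicit classification. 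The trade-off: your proof is shorter and treats all finite simple $\G$ at once, whereas the paper's non-abelian branch yields the stronger structural byproduct $N=\G^i\times 1\times 1\times\cdots$, which the paper reuses later (in the example following the lemma, in the discussion of $\s$-infinitesimal groups, and in the two-sided analog, Lemma~\ref{lemma: normal subgroup of shift on simple group for inversive}); your argument would not supply that description without extra work.
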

\begin{proof}
	We have to distinguish two cases: First we assume that $\G$ is not abelian. This implies that any normal subgroup $\mathcal{N}$ of $\G^n$ is of the form $\mathcal{N}=\mathcal{N}_1\times\ldots\times\mathcal{N}_n$ with $\mathcal{N}_i\in\{1,\G\}$ for $i=1,\ldots,n$. To see this, note that if $(h_1,\ldots,h_n)\in\mathcal{N}$ with $h_i\neq 1$ for some $1\leq i\leq n$, then there exists $g$ in $\G$ with $gh_i\neq h_ig$ as otherwise the center of $\G$ would be non-trivial.
	Then
	\begin{align*}
	&(1,\ldots,1,g,1,\ldots,1)(h_1,\ldots,h_n)(1,\ldots,1,g,1,\ldots,1)^{-1}(h_1,\ldots,h_n)^{-1}= \\
	& =(1,\ldots,1,gh_ig^{-1}h_i^{-1},1,\ldots,1)
	\end{align*} is a non-trivial element of $\mathcal{N}\cap (1\times\ldots \times 1\times \G\times 1\times \ldots\times 1)$. 
	By the simplicity of $\G$ we find $1\times\ldots \times 1\times \G\times 1\times \ldots\times 1\subseteq \mathcal{N}$.
	
	As $N$ is a proper subgroup of $G$, there exists an $i\in\N$ such that $N[i]$ is a proper subgroup of $\G^{i+1}$. Let $i$ be minimal with this property. Because $N$ is normal in $G$, $N[i]$ is normal in $\G^{i+1}$. By the minimality of $i$ we have $\G^i\times \{1\}\subseteq N[i]$. It follows that $N[i]=\G^i\times\{1\}$. But then necessarily $N=\G^i\times 1\times 1\times \ldots\leq \G^\N$. The surjective morphism
	$$\G^\N\to \G^\N,\ (g_0,g_1,\ldots)\mapsto (g_i,g_{i+1},\ldots)$$ induces an isomorphism $G/N\simeq \G^\N$.
	
	We now treat the case that $\G$ is abelian. Then $\G$ is cyclic of prime order and does not have any proper non-trivial subgroups. For $i\geq 1$ let $\mathcal{N}_i$ denote the subgroup of $\G$ such that the kernel of $\pi_i\colon N[i]\to N[i-1]$ is of the form $\{1\}^i\times\mathcal{N}_i\leq \G^{i+1}$. We also set $\mathcal{N}_0=N[0]$. Then the $\mathcal{N}_i$ are a decreasing chain of subgroups of $\G$ (cf. proof of Proposition~\ref{prop: group shift is of finite type}).
	We cannot have $\mathcal{N}_i=\G$ for all $i\in \N$ because this would imply $N=G$.

	 Thus there exist an $n\in \N$ such that $\mathcal{N}_0,\ldots,\mathcal{N}_{n-1}$ are all equal to $\G$ and $\mathcal{N}_n,\mathcal{N}_{n+1},\ldots$ are all equal to the trivial group. So $\ld(N)=1$ and $N$ is $n$-step by Lemma \ref{lemma: ld}. In particular, $N$ is finite by Lemma \ref{lemma: finite if and only if ld 1}. As $\G^{n+1}/N[n]$ has the same (prime) cardinality as $\G$ we see that $\G^{n+1}/N[n]\simeq \G$.
	
	The map $G\to (\G^{n+1}/N[n])^\N,\ (g_0,g_1,\ldots)\mapsto (\overline{(g_i,g_{i+1},\ldots,g_{i+n})})_{i\in\N}$ is a surjective morphism of expansive profinite groups with kernel $N$ and therefore induces an isomorphism $G/N\simeq (\G^{n+1}/N[n])^\N\simeq \G^\N$.
\end{proof}

\begin{ex}
	In the proof of Lemma \ref{lemma: normal subgroup of shift on simple group} we have seen that if $\G$ is a finite non-abelian simple group, then every proper normal expansive subgroup $N$ of $\G^\N$ is of the form $N=\G^i\times 1\times 1\ldots\leq \G^\N$ for some $i\in\N$. If $\G$ is not abelian, this is not true anymore, for example, for $\G$ abelian, the ``diagonal'' subgroup $N=\{(g,g,\ldots)|\ g\in\G\}$ is a proper normal expansive subgroup.
\end{ex}
One can recover $\G$ from $\G^\N$:

\begin{lemma} \label{lemma: recover group}
	Let $\G$ and $\H$ be finite groups. If the full one-sided group shifts $\G^\N$ and $\H^\N$ are isomorphic, then $\G$ and $\H$ are isomorphic.
\end{lemma}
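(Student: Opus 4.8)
The plan is to extract the group $\G$ from the expansive profinite group $\G^\N$ in a way that is manifestly invariant under isomorphism, and the cleanest such invariant is the kernel of the endomorphism $\s$. For the full one-sided group shift $G=\G^\N$ we have $\s(g_0,g_1,\ldots)=(g_1,g_2,\ldots)$, so
$$\ker(\s)=\{(g_0,1,1,\ldots)\mid g_0\in\G\}.$$
The map $(g_0,1,1,\ldots)\mapsto g_0$ is a group isomorphism $\ker(\s)\to\G$, since componentwise multiplication on $\G^\N$ affects only the zeroth coordinate. Thus $\G$ is recovered, up to isomorphism, as $\ker(\s)$.

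Next I would use that a morphism of expansive profinite groups commutes with $\s$ by definition. If $\f\colon\G^\N\to\H^\N$ is an isomorphism of expansive profinite groups, then $\f\circ\s=\s\circ\f$, so for $x\in\ker(\s)$ we have $\s(\f(x))=\f(\s(x))=\f(1)=1$, i.e.\ $\f(\ker(\s))\subseteq\ker(\s)$. Applying the same argument to the inverse isomorphism $\f^{-1}$, which is again a morphism of expansive profinite groups, gives the reverse inclusion, so $\f$ restricts to a group isomorphism $\ker(\s_{\G^\N})\to\ker(\s_{\H^\N})$. Combining this with the previous paragraph yields $\G\simeq\ker(\s_{\G^\N})\simeq\ker(\s_{\H^\N})\simeq\H$, as desired.

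I do not anticipate a genuine obstacle; the only thing to watch is that one tracks the group structure rather than merely cardinalities. For contrast, the limit degree gives $\ld(\G^\N)=|\G|$, since the kernel of $\G^{i+1}\to\G^i$ always has order $|\G|$, so an isomorphism already forces $|\G|=|\H|$; but this records only the order of $\G$, not its isomorphism type. The virtue of the $\ker(\s)$ argument is precisely that it transports the entire group structure, because $\ker(\s)$ is a functorial invariant of an expansive profinite group and $\s$ is preserved by every morphism. The single point worth spelling out is that the identification $\ker(\s)\simeq\G$ is an isomorphism of abstract groups, which is immediate from the componentwise multiplication on $\G^\N$.
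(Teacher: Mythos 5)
Your proof is correct and takes exactly the paper's approach: the paper's proof is the one-line observation that $\G$ can be recovered from $\G^\N$ as $\ker(\s)$, and your write-up simply fills in the routine details (that $\ker(\s)\simeq\G$ as groups and that any isomorphism of expansive profinite groups carries $\ker(\s)$ to $\ker(\s)$).
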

\begin{proof}
	It suffices to note that $\G$ can be recovered from $G=\G^\N$ as the kernel of $\s\colon G\to G$.
\end{proof}

\subsection{$\s$-Infinitesimal expansive profinite groups}

In this short subsection we deal with the groups that occur in the last position in the subnormal series in the main decomposition theorem (Theorem \ref{theo: main}).

\begin{defi}
	An expansive profinite group $G$ is \emph{$\s$-infinitesimal} if for every $g\in G$ there exists an $n\in\N$ such that $\s^n(g)=1$.
\end{defi}

\begin{ex}
	In the proof of Lemma \ref{lemma: normal subgroup of shift on simple group} we have seen that every proper normal expansive subgroup of a full one-sided shift on a finte non-abelian simple group is $\s$-infinitesimal.
\end{ex}

\begin{lemma} \label{lemma: sinfinitesimal}
	Let $G$ be an expansive profinite group. Then $G$ is $\s$-infinitesimal if and only if $G$ is finite and some power of $\s\colon G\to G$ is the trivial endomorphism $g\mapsto 1$. 
\end{lemma}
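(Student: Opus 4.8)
The plan is to prove both directions of the equivalence in Lemma~\ref{lemma: sinfinitesimal}. The backward direction is easy: if $G$ is finite and $\s^m=1$ (the trivial endomorphism) for some $m\in\N$, then for every $g\in G$ we have $\s^m(g)=1$, so $G$ is $\s$-infinitesimal by definition. The real content is the forward direction, so I would concentrate there.

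For the forward direction, suppose $G$ is $\s$-infinitesimal. First I would show that $G$ is finite. By Lemma~\ref{lemma: finite if and only if ld 1} it suffices to show $\ld(G)=1$. To this end I would invoke Lemma~\ref{lemma: exists standard embedding} (or Corollary~\ref{cor: s injective implies finite}'s setup) to assume $G=G_\Gamma$ for a directed group graph $\Gamma$ on a finite group $\G$, and as usual discard vertices not lying on an infinite directed path. The $\s$-infinitesimal hypothesis says that every element of $G$, i.e.\ every infinite directed path in $\Gamma$, is eventually constant equal to the identity vertex $1$. I claim this forces $\Gamma$ to contain no directed cycle other than possibly the loop at $1$. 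Indeed, if there were a directed cycle not passing through $1$ (or a loop at a vertex $v\ne 1$), one could build an infinite directed path that traverses this cycle forever, giving an element $g\in G$ with $\s^n(g)\ne 1$ for all $n$, a contradiction. Even a cycle through $1$ that revisits a nonidentity vertex infinitely often would produce such a $g$. Hence the only infinite behavior available is eventual absorption at the loop $1\to 1$, which means every infinite path stabilizes at $1$ after boundedly many steps (by finiteness of $\G$ there is a uniform bound $N$ on the length of any directed path avoiding the loop at $1$). This bounds $|G[i]|$ uniformly in $i$, so $\ld(G)=\lim_i |G[i]|/|G[i-1]|=1$, giving finiteness.

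Once $G$ is finite, I would show some power of $\s$ is trivial. Since $G$ is finite and $\s\colon G\to G$, the hypothesis gives for each $g$ an $n_g$ with $\s^{n_g}(g)=1$; because $\s(g)=1\Rightarrow\s^k(g)=1$ for all $k\ge 1$, taking $m=\max_{g\in G} n_g$ (a finite max) yields $\s^m(g)=1$ for every $g\in G$, i.e.\ $\s^m$ is the trivial endomorphism. Alternatively, and perhaps more cleanly, I would note that on the finite set $G$ the descending images $G\supseteq\s(G)\supseteq\s^2(G)\supseteq\cdots$ stabilize, say at $\s^r(G)=\s^{r+1}(G)=\cdots=:H$; then $\s|_H\colon H\to H$ is surjective, hence bijective, so no nonidentity element of $H$ can satisfy $\s^n=1$ unless $H=1$; the $\s$-infinitesimal hypothesis then forces $H=1$, i.e.\ $\s^r=1$.

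The main obstacle I anticipate is the graph-theoretic argument establishing finiteness: one must carefully rule out all forms of recurrent behavior in $\Gamma$ that would generate a directed path not eventually absorbed at $1$, and then extract the uniform bound on path lengths. The algebraic step producing a uniform power $m$ with $\s^m=1$ is then routine finiteness. I expect the cleanest writeup to lean on the already-established machinery (Lemma~\ref{lemma: exists standard embedding}, Lemma~\ref{lemma: finite if and only if ld 1}) rather than redoing the entropy computation by hand.
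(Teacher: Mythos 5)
Your proof is correct and takes essentially the same route as the paper: reduce to $G=G_\Gamma$ via Lemma~\ref{lemma: exists standard embedding}, note that any directed cycle other than the loop at $1$ yields a periodic point contradicting $\s$-infinitesimality, deduce finiteness from the resulting uniform bound on path lengths, and then extract a uniform power of $\s$ that is trivial. The only (cosmetic) difference is that you route the finiteness step through $\ld(G)=1$ and Lemma~\ref{lemma: finite if and only if ld 1}, whereas the paper concludes finiteness directly from the graph structure.
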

\begin{proof}
	By Lemma \ref{lemma: exists standard embedding}, we may assume that $G=G_\Gamma$ for some directed group graph $\Gamma$.
	Suppose $\Gamma$ contains a cycle that is not equal to the cycle whose only edge is $(1,1)$. Looping inside this cycle yields a periodic point $g\in G$. This contradicts the assumption that $\s^n(g)=1$ for some $n\in \N$.	Thus the only circuit in $\Gamma$ is stationary at $1$. This implies that $G$ is finite and so $\s^n(g)=1$ for all $g\in G$ for $n\gg 1$.
	
	The reverse implication is clear. 
\end{proof}

Note that a $\s$-infinitesimal expansive profinite group is $\s$-connected because every $\s$-stable subset contains $1$. For finite groups there is a converse:

\begin{lemma} \label{lemma: finite and sconnected implies infinitesimal}
	A finite $\s$-connected expansive profinite group is $\s$-infinitesimal.
\end{lemma}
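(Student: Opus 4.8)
The goal is to prove that a finite $\s$-connected expansive profinite group $G$ is $\s$-infinitesimal, which by Lemma \ref{lemma: sinfinitesimal} amounts to showing that some power of $\s$ is the trivial endomorphism, i.e., that for every $g\in G$ there is an $n$ with $\s^n(g)=1$. The plan is to use Example \ref{ex: main for finite}, which identifies the $\s$-identity component of a finite expansive profinite group explicitly as $G^\sc=\{g\in G\mid \exists\, n\in\N\colon \s^n(g)=1\}$. This is exactly the set of elements eventually killed by $\s$, which is precisely the defining condition for $\s$-infinitesimality.

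The argument is then essentially immediate: since $G$ is assumed to be $\s$-connected, by the remark following the definition of the $\s$-identity component we have $G=G^\sc$. Applying the explicit description from Example \ref{ex: main for finite}, we conclude $G=\{g\in G\mid \exists\, n\in\N\colon \s^n(g)=1\}$, which says exactly that for every $g\in G$ there exists $n\in\N$ with $\s^n(g)=1$. That is the definition of $\s$-infinitesimal, so we are done.

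Since the proof is so short, I would simply write it out in full rather than sketch it. There is no real obstacle here: the entire content has been pre-packaged into Example \ref{ex: main for finite}, which did the work of computing $G^\sc$ for finite groups. The only thing to be careful about is invoking the correct characterization—that a group equals its $\s$-identity component precisely when it is $\s$-connected—which is recorded in the text right after the definition of $G^\sc$. A concrete proof reads as follows.

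\begin{proof}
Since $G$ is $\s$-connected, we have $G=G^\sc$. By Example \ref{ex: main for finite}, $G^\sc=\{g\in G\mid \exists\, n\in\N\colon \s^n(g)=1\}$. Therefore every $g\in G$ satisfies $\s^n(g)=1$ for some $n\in\N$, i.e., $G$ is $\s$-infinitesimal.
\end{proof}
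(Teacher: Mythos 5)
Your proof is correct, and there is no circularity: Example \ref{ex: main for finite} precedes this lemma in the paper, and its argument does not rely on the lemma. The difference from the paper's proof is one of packaging rather than substance. The paper argues from scratch: it writes $G$ as the disjoint union of the two $\s$-closed sets $\{g\in G\mid \exists\, n\in\N\colon \s^n(g)=1\}$ and $\{g\in G\mid \s^n(g)\neq 1\ \forall\, n\in\N\}$ (both are closed because $G$ is finite and both are visibly $\s$-stable), and then invokes $\s$-connectedness to conclude that the first set, which contains $1$, must be all of $G$. You instead combine the identity $G=G^\sc$ (valid for $\s$-connected groups, as noted in the text right after the definition of the $\s$-identity component) with the computation of $G^\sc$ for finite expansive profinite groups carried out in Example \ref{ex: main for finite}. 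Since that example's internal argument is itself a decomposition of $G$ into disjoint closed $\s$-stable sets (the preimages of the $\s$-orbits in $G/N$), your route rests on the same underlying idea, just already packaged: the paper's version buys self-containedness, while yours buys brevity by reusing work the paper had already done.
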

\begin{proof}
Let $G$ be a finite $\s$-connected expansive group. Then $G$ is the disjoint union of the $\s$\=/closed sets $\{g\in G|\ \exists \ n\in\N \colon \s^n(g)=1\}$ and $\{g\in G|\ \s^n(g)\neq 1 \ \forall \ n\in \N\}$. The former set is non-empty because it contains $1$ and must therefore equal $G$.
\end{proof}

\section{The decomposition theorem}

In this section we prove our main result: the decomposition theorem (Theorem \ref{theo: main}).
The proof proceeds by induction on the limit degree. We first tackle the case where the induction hypothesis cannot be applied. More precisely, we show that for an infinite expansive $\s$-connected profinite group $G$ such that $\ld(N)\in\{1,\ld(G)\}$ for any normal expansive subgroup $N$ of $G$, there exists an $\ell\in\N$ such that $G/\ker(\s^\ell)$ is isomorphic to a full one-sided group shift on a finite simple group.

\medskip

Let $G$ be an expansive profinite group. It will be useful to consider the set $\operatorname{Emb}(G)$ of all morphisms $\f\colon G\to \G^\N$ from $G$ to a full one-sided group shift such that 
\begin{itemize}
	\item $\ker(\f)$ agrees with the kernel of $\s^\ell\colon G\to G$ for some $\ell\in\N$, in particular, $\ker(\f)$ is $\s$-infinitesimal (for $\ell=0$, by definition, $\s^\ell=\id$ and so $\ker(\s^\ell)=1$), 
	\item  $\f(G)[0]=\G$ and
	\item $\ld(G)=|\ker(\f(G)[1]\xrightarrow{\pi_1}\f(G)[0])|$.
\end{itemize}	
Recall that $\pi_1$ and the notation $G[i]$ was defined in Section \ref{sec: onesided group shift}. Note that by Lemma~\ref{lemma: exists standard embedding} $\operatorname{Emb}(G)$ is non-empty. Moreover, for $\f\in\operatorname{Emb}(G)$ we have $\ld(\f(G))=\ld(G)$ and $\f(G)$ is $1$-step (Lemmas~\ref{lemma: ld multiplicative on quotients}, \ref{lemma: finite if and only if ld 1}, \ref{lemma: sinfinitesimal} and \ref{lemma: ld}).

\begin{lemma} \label{lemma: for Babbitt step1 sr}
	Let $G$ be an expansive profinite group and let $\f\colon G\to \G^\N$ be an element of $\operatorname{Emb}(G)$ such that $|\G|$ is minimal. Then, for every $i\in\N$, the map $\f(G)\to \G,\ (g_0,g_1,\ldots)\mapsto g_i$ is surjective.
\end{lemma}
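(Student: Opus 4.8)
The plan is to write $H:=\f(G)\leq\G^\N$ and, for each $i\in\N$, to analyze the image $V_i\subseteq\G$ of the $i$-th coordinate map $H\to\G$, $(g_0,g_1,\ldots)\mapsto g_i$ — precisely the map we must prove surjective. Since this map factors as the $0$-th coordinate map applied to $\s^i\colon H\to H$, we have $V_i=\s^i(H)[0]$, a subgroup of $\G$, with $V_0=H[0]=\G$ by the second defining property of $\operatorname{Emb}(G)$. As noted after the definition of $\operatorname{Emb}(G)$, $H$ is $1$-step; I would write $H=G_\Gamma$ for the directed group graph $\Gamma$ on $\G$ with edge set $E=H[1]$, so that the elements of $H$ are exactly the infinite directed paths in $\Gamma$. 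The equality $V_0=\G$ says that every vertex of $\Gamma$ starts an infinite directed path, a fact I use repeatedly.

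First I would observe that $V_{i+1}$ equals the out-neighborhood of $V_i$ in $\Gamma$: a vertex $w$ occurs at position $i+1$ of some path precisely when it is the target of an edge from a vertex of $V_i$ (the necessary continuation past $w$ exists because every vertex starts an infinite path). As the out-neighborhood operation is monotone and $V_1\subseteq V_0=\G$, induction yields a decreasing chain of subgroups $\G=V_0\supseteq V_1\supseteq V_2\supseteq\cdots$. It therefore suffices to prove $V_1=\G$, for then $V_{i+1}$ is the out-neighborhood of $\G$, i.e. $V_1=\G$, and all $V_i=\G$ by induction.

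The core is a minimality argument by contradiction: suppose $\G_1:=V_1\subsetneq\G$. Since $V_i\subseteq V_1=\G_1$ for all $i\geq 1$, the shift $\s(H)$ lies in $\G_1^\N$, so I would consider $\f':=\f\circ\s\colon G\to\G_1^\N$, whose image is $\s(\f(G))=\s(H)$, and argue $\f'\in\operatorname{Emb}(G)$ — a contradiction, since $|\G_1|<|\G|$. The kernel condition is immediate: if $\ker(\f)=\ker(\s^\ell)$, then $\ker(\f')=\s^{-1}(\ker(\s^\ell))=\ker(\s^{\ell+1})$. The alphabet condition holds because $\f'(G)[0]=\s(H)[0]=V_1=\G_1$. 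The step I expect to require the most care is the third condition, $|\ker(\f'(G)[1]\xrightarrow{\pi_1}\f'(G)[0])|=\ld(G)$: I would identify this kernel with $K:=\{w\in\G\mid (1,w)\in E\}=\ker(H[1]\xrightarrow{\pi_1}H[0])$, which has cardinality $\ld(G)$ by the third defining property of $\operatorname{Emb}(G)$. The inclusion into $K$ is clear, since a block $(1,w)$ occurring at positions $1,2$ of a path lies in $E$; for the reverse inclusion one uses $1\in\G_1=V_1$ (a subgroup contains $1$) to build, for each $w\in K$, a path $v\to 1\to w\to\cdots$, exhibiting $w$ in the kernel. Here the hypothesis $H[0]=\G$ (every vertex starts an infinite path) and the subgroup property $1\in V_1$ are exactly what make the reverse inclusion work, which is why I single this step out as the main obstacle. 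The remaining derived consistencies (that $\ld$ is unchanged under the shift and that $\f'(G)$ is again $1$-step) follow as in the remark after the definition of $\operatorname{Emb}(G)$ from multiplicativity of $\ld$ together with Lemmas \ref{lemma: finite if and only if ld 1}, \ref{lemma: sinfinitesimal} and \ref{lemma: ld}. This contradiction forces $V_1=\G$, and with the reduction above every coordinate map is surjective.
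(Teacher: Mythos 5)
Your proof is correct, and its engine is the same as the paper's: assuming some coordinate map is not surjective, compose $\f$ with a power of the shift to produce an element of $\operatorname{Emb}(G)$ whose alphabet is a proper subgroup of $\G$, contradicting the minimality of $|\G|$; the kernel and alphabet conditions are checked exactly as in the paper. You differ in two places. First, you insert a preliminary reduction to position $1$: since the coordinate images $V_i$ are subgroups and form a decreasing chain by monotonicity of the out\-/neighborhood operation in $\Gamma$, surjectivity at position $1$ propagates to all positions. The paper needs no such reduction, because it takes an arbitrary $i$ with proper image $\H$ and works directly with the composite of $\f$ and the shift by $i$. Second, and more substantively, you verify the third condition of $\operatorname{Emb}(G)$ by an exact graph\-/theoretic identification: the kernel of $\f'(G)[1]\to\f'(G)[0]$ equals the kernel of $H[1]\to H[0]$, proved via the explicit path $v\to 1\to w\to\cdots$ (this is where you correctly lean on $H[0]=\G$ and $1\in V_1$, and where your restriction to $i=1$ keeps the bookkeeping manageable). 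The paper instead records only the easy inequality $|\ker(\f''(G)[1]\to\f''(G)[0])|\le \ld(G)$ coming from the inclusion of kernels, and recovers the reverse inequality softly: by Lemma~\ref{lemma: ld} the kernel sizes are non\-/increasing in $j$ and stabilize at $\ld(\f''(G))$, and $\ld(\f''(G))=\ld(G)$ by multiplicativity of the limit degree and finiteness of the $\s$\-/infinitesimal kernel (Lemmas~\ref{lemma: ld multiplicative on quotients}, \ref{lemma: finite if and only if ld 1}, \ref{lemma: sinfinitesimal}). Your route buys a concrete, exact description of the kernel at the price of graph bookkeeping; the paper's route is shorter at that step and applies verbatim for any $i$, which is precisely why it can dispense with your reduction.
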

\begin{proof}
	Assume, for a contradiction, that there exists an $i\in \N$ such that the image $\H\leq \G$ of  $\f(G)\to \G,\ (g_0,g_1,\ldots)\mapsto g_i$ is properly contained in $\G$. 
	The map $$\f'\colon \f(G)\to \H^\N,\ (g_0,g_1,\ldots)\mapsto (g_i,g_{i+1},\ldots)$$ is a morphism of expansive profinite groups and so is the composition $\f''=\f'\f\colon G\to \H^\N$. We will show that $\f''\in\operatorname{Emb}(G)$.

	Assume $\ker(\f)=\ker(\s^\ell)$. We claim that $\ker(\f'')=\ker(\s^{\ell+i})$. If $g\in\ker(\f'')$, then $\f(g)\in\ker(\f')$ and so $\s^i(\f(g))=1$. Thus $\f(\s^i(g))=\s^i(\f(g))=1$ and so $\s^i(g)\in \ker(\f)=\ker(\s^\ell)$. Therefore $g\in\ker(\s^{\ell+i})$.
	
	Conversely, if $g\in\ker(\s^{\ell+i})$, then $\s^i(g)\in\ker(\s^\ell)=\ker(\f)$, and so $\s^i(\f(g))=\f(\s^i(g))=1$. Thus $\f(g)\in\ker(\f')$ and $g\in\ker(\f'')$.
	
	By construction $\f''(G)[0]=\H$. We have a commutative diagram 
	$$
	\xymatrix{
		\f''(G)[1] \ar_{\rho_1}[d] \ar[r] & \f''(G)[0] \ar^{\rho_0}[d] \\
		\f(G)[1] \ar[r] & \f(G)[0]	
	}
	$$
	where $\rho_1(h_0,h_1)=(h_0,h_1)$ and $\rho_0(h_0)=h_0$. As	$\rho_1$ maps the kernel of $\f''(G)[1] \to \f''(G)[0]$ injectively into the kernel of $\f(G)[1] \to \f(G)[0]$, we see that
	$$|\ker(\f''(G)[1] \to \f''(G)[0])|\leq |\ker(\f(G)[1] \to \f(G)[0])|=\ld(G),$$
	where the latter equality follows from $\f\in\operatorname{Emb}(G)$.
	By Lemma \ref{lemma: ld} the sequence $|\ker(\f''(G)[j] \to \f''(G)[j-1])|_{j\geq 1}$ is non-increasing and stabilizes with value $\ld(\f''(G))$. But, using Lemmas \ref{lemma: ld multiplicative on quotients} and \ref{lemma: finite if and only if ld 1}, we find $$\ld(\f''(G))=\ld(G/\ker(\f''))=\frac{\ld(G)}{\ld(\ker(\f''))}=\ld(G).$$ In summary, it follows that $\f''\in\operatorname{Emb}(G)$. Because $|\H|<|\G|$, this contradicts the choice of~$\f$.
\end{proof}

\begin{prop} \label{prop: if no normal then benign}
	Let $G$ be an infinite expansive profinite group and let $\f\colon G\to \G^\N$ be an element of $\operatorname{Emb}(G)$ such that $|\G|$ is minimal. Then there exists a normal non-trivial subgroup $\mathcal{N}$ of $\G$ such that $\mathcal{N}^\N\subseteq\f(G)$.
	%
\end{prop}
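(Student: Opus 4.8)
Write $H=\f(G)$, a $1$-step group shift on $\G$ with $\ld(H)=\ld(G)$ and $H[0]=\G$, and let $\Gamma$ be the directed group graph with $H=G_\Gamma$, so the edge set $E=H[1]\leq\G\times\G$ is a subgroup with $|\ker(E\xrightarrow{\pi_1}\G)|=\ld(G)$. Because $G$ is infinite, $\ld(G)>1$ by Lemma \ref{lemma: finite if and only if ld 1}, so the kernel $\ker(\pi_1)=\{1\}\times\G'$ is a \emph{non-trivial} subgroup $\G'=\G_1$ of $\G$ (in the notation of the proof of Proposition \ref{prop: group shift is of finite type}). The natural candidate for the sought group is $\mathcal{N}=\G'$, and the goal is to show ${\G'}^\N\subseteq H$, i.e. that the constant-value-in-$\G'$ sequences (indeed all sequences in $\G'$) trace out infinite directed paths in $\Gamma$ and form a full shift sitting inside $H$.

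\textbf{Key steps.} First I would use Lemma \ref{lemma: for Babbitt step1 sr}: by the minimality of $|\G|$, each coordinate projection $H\to\G$, $(g_0,g_1,\dots)\mapsto g_i$, is surjective, so in particular every vertex of $\Gamma$ lies on an infinite directed path and every element of $\G$ occurs at every position. Next I would translate the defining property of $\G'$ into graph language: $\{1\}\times\G'=\ker(\pi_1)$ means precisely that the set of edges of $\Gamma$ emanating from the vertex $1$ is exactly $\{1\}\times\G'$, i.e. the followers of $1$ are exactly $\G'$. The crucial structural observation is that the edge set $E\leq\G\times\G$ is a subgroup, so for any vertex $g$ the set of followers of $g$ is the coset $g\cdot\G'$ of $\G'$ determined by the follower set of $1$ (multiply the edge $(g,g')$ by an edge $(1,n)$ with $n\in\G'$ to get $(g,g'n)$; since $H[0]=\G$ is surjective there is at least one edge out of each vertex). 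In particular $\G'$ itself, as a set of vertices, is closed under ``taking followers'': starting at $1$ one can reach exactly $\G'$ in one step, and since followers of any $g\in\G'$ form $g\G'=\G'$, one stays inside $\G'$ forever. Hence the full directed subgraph on the vertex set $\G'$ is a directed group graph on $\G'$ in which every vertex has $\G'$ as its complete follower set; its associated shift is therefore the full shift ${\G'}^\N$, and it embeds in $H$. To finish, I would verify $\mathcal{N}=\G'$ is normal in $\G$: this follows because $\G'=\G_1$ arises as the stabilized value of the normal-subgroup chain $\G_i$ in the proof of Proposition \ref{prop: group shift is of finite type}, each $\G_i$ being a normal subgroup of $\G$ as a follower-kernel of a group homomorphism; alternatively, one argues directly that conjugating a path from $1$ to an element of $\G'$ by a path starting at an arbitrary $h\in\G$ (same length, using surjectivity of projections) again yields a path from the conjugate to $1$, exactly as in the proof of Lemma \ref{lemma: sconnected component}.

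\textbf{The main obstacle.} The delicate point is getting from ``the followers of $1$ are $\G'$'' to ``$\G'$ is invariant under followers,'' i.e. that $\Gamma$ restricted to vertex set $\G'$ is a \emph{full} shift on $\G'$ rather than merely some subshift. This is where I must use that $H$ is $1$-step together with the fact that $\G'$ is the \emph{stabilized} value $\bigcap_i\G_i$: in a $1$-step realization chosen so that the limit degree is already achieved at level $1$, the follower set of each vertex has size exactly $|\G'|=\ld(G)$ and, being a coset of the follower-kernel $\G'$, must be a full coset; combined with $1\in\G'$ having $\G'$ as its followers this forces every $g\in\G'$ to have follower set $g\G'=\G'$. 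I expect the normality of $\mathcal{N}$ to be comparatively routine given the machinery already developed, so the real content is this follower-invariance argument pinning down the full-shift structure.
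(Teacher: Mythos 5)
There is a genuine gap, and it sits exactly at the step you yourself flag as ``the main obstacle'': your candidate $\mathcal{N}=\G'$ (the follower set of $1$, the paper's $\G_1$) is simply the wrong subgroup, and the coset argument offered for follower\-/invariance does not work. The group structure of $E=\f(G)[1]$ shows that the follower set of a vertex $g$ is $g'\G_1$ for any \emph{one} follower $g'$ of $g$ --- your multiplication by edges $(1,n)$ proves exactly this --- but not that this coset is $g\G_1$. In fact $g\mapsto(\text{follower set of }g)$ is a surjective homomorphism $\G\to\G/\G_1$ whose kernel is the \emph{predecessor} set $\mathcal{P}=\{p\in\G \mid (p,1)\in E\}$ of $1$ (the paper's $\G'$), so the followers of $g$ lie inside $\G_1$ if and only if $g\in\mathcal{P}$. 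Your claim is therefore equivalent to $\G_1\subseteq\mathcal{P}$, which can fail under all of your hypotheses. The paper's own third example after Theorem \ref{theo: main} (Kitchens' Example 4) is a counterexample: $G=\{(g,h)\in(\Gm^2)^\N \mid g^4=1,\ h^2=1,\ \s(h)=g^2h\}$ is a $1$\-/step group shift on a group $\G$ of order $8$, with edges $(g_0,h_0)\to(g_1,g_0^2h_0)$, $g_1$ arbitrary. The follower set of the identity is $\G_1=\{(g_1,1)\mid g_1^4=1\}$, of order $4=\ld(G)$, but the follower set of $(g_0,1)$ with $g_0$ of order $4$ is $\{(g_1,g_0^2)\mid g_1^4=1\}$, disjoint from $\G_1$; accordingly $\G_1^\N\not\subseteq G$ (the constant sequence at $(g_0,1)$ violates $\s(h)=g^2h$). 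Here all coordinate projections are surjective and the identity embedding is minimal (any $\f''\in\operatorname{Emb}(G)$ has alphabet $\H$ with $4|\H|=|\f''(G)[1]|\leq|\H|^2$, and $|\H|=4$ would force $\f''(G)=\H^\N$, hence $G\simeq\s^\ell(G)\simeq G/\ker(\s^\ell)$ a full shift, contradicting Kitchens since $\s$ is surjective on this $G$), so everything you actually invoke --- Lemma \ref{lemma: for Babbitt step1 sr}, the $1$\-/step property, and $|\G_1|=\ld(G)>1$ --- holds while your conclusion fails.

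The paper's proof takes $\mathcal{N}=\G_1\cap\G'$, the intersection of the follower and predecessor sets of $1$ (in the example above this is $\{(g_0,1)\mid g_0^2=1\}$ of order $2$, and indeed $\mathcal{N}^\N\subseteq G$). As in your proposal, normality and the final step ($\mathcal{N}\times\mathcal{N}\subseteq\f(G)[1]$ plus $1$\-/stepness give $\mathcal{N}^\N\subseteq\f(G)$) are routine; the entire difficulty is the nontriviality of this intersection, and this is where minimality of $|\G|$ enters in a way that goes far beyond Lemma \ref{lemma: for Babbitt step1 sr}. Assuming $\G_1\cap\G'=1$, the paper constructs a new element $\f'\in\operatorname{Emb}(G)$ with the quotient alphabet $\H=\f(G)[1]/(\G'\times\G_1)\simeq\G/\G'$ (checking the kernel condition $\ker(\f')=\ker(\s^{\ell+1})$ and the limit\-/degree condition), so minimality forces $|\G/\G'|\geq|\G|$, i.e.\ $\G'=1$; then Lemma \ref{lemma: for Babbitt step1 sr} makes $(g_0,g_1)\mapsto g_1$ an isomorphism $\f(G)[1]\to\G$, giving $\ld(G)=1$ and contradicting that $G$ is infinite. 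Any repair of your argument would have to replace $\G_1$ by this intersection and reproduce that quotient\-/alphabet use of minimality; the follower\-/coset structure alone cannot deliver the result.
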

\begin{proof}
	%
	%
	%
	We consider $\f(G)[1]\leq \G\times\G$ and $\pi_1\colon \f(G)[1]\to\f(G)[0]=\G,\ (g_0,g_1)\mapsto g_0$. We also have a group homomorphism $\s_1\colon \f(G)[1]\to \G,\ (g_0,g_1)\mapsto g_1$. 
	Let $\G_1\leq \G$ be such that $\ker(\pi_1)=\{1\}\times\G_1$. Similarly, let $\G'\leq \G$ be such that $\ker(\s_1)=\G'\times 1$. Then $\G'\times \G_1$ is a normal subgroup of $\f(G)[1]$.
	Moreover, since $\s_1$ is surjective (Lemma \ref{lemma: for Babbitt step1 sr}), $\G_1$ is normal in $\G$ and because $\pi_1$ maps onto $\G$, $\G'$ is normal in $\G$. Thus $\mathcal{N}=\G_1\cap\G'$ is normal in $\G$. 
	
	Suppose $\mathcal{N}=1$. Define
	$\H=\f(G)[1]/(\G'\times \G_1)$ and 
	%
	%
	consider the morphism $$\f'\colon G\to \H^\N,\ g\mapsto (\overline{\f(g)_i,\f(g)_{i+1}})_{i\in\N}$$ of expansive profinite groups. Note that $\f'$ is the composition of $\f\colon G\to \f(G)$, the $2$-block presentation $\f(G)\simeq (\f(G)[1])^\N$ and the $1$-block map $(\f(G)[1])^\N\to\H^\N$.
	
	We will show that $\f'\in\operatorname{Emb}(G)$.
	Let $\ell\in\N$ be such that $\ker(\f)=\ker(\s^\ell)$. We claim that $\ker(\f')=\ker(\s^{\ell+1})$. If $g\in\ker(\f')$, then $\f(g)$ lies in the kernel of the map 
	$$\f(G)\to \H^\N,\ (g_0,g_1,\ldots)\mapsto \left(\overline{(g_i,g_{i+1})}\right)_{i\in\N},$$
	which equals $\G'\times 1\times 1\ldots\leq \G^\N$ because $\mathcal{N}=1$. 
	Thus $\f(\s(g))=\s(\f(g))=1$. So $\s(g)\in\ker(\f)=\ker(\s^\ell)$ and therefore $g\in\ker(\s^{\ell+1})$. 
	
	Conversely, if $g\in\ker(\s^{\ell+1})$, then $\s(g)\in\ker(\s^\ell)=\ker(\f)$ and so $\s(\f(g))=\f(\s(g))=1$. Thus $\f(g)=(g',1,1,\ldots)$ with $g'\in\G'$ and therefore $g\in\ker(\f')$.

	In particular, $\ker(\f')$ is $\s$-infinitesimal and therefore finite (Lemma \ref{lemma: sinfinitesimal}).
	So, using Lemmas~\ref{lemma: ld multiplicative on quotients} and \ref{lemma: finite if and only if ld 1} we have  $$\ld(\f'(G))=\ld(G/\ker(\f'))=\frac{\ld(G)}{\ld(\ker(\f'))}=\ld(G).$$
	The surjective group homomorphism $\f(G)[1]\to \G/\G',\ (g_0,g_1)\mapsto \overline{g_0}$ has kernel $\G'\times \G_1$ and therefore induces an isomorphism $\eta\colon \H\to \G/\G'$. The group homomorphism
	$$\xi\colon \f(G)[1]\to \H\times\H,\ (g_0,g_1)\mapsto \left(\overline{(g_0,g_1)}, \eta^{-1}(\overline{g_1})\right)$$ has image  $\f'(G)[1]$, because an element of $\f'(G)[1]$ is of the form $\left(\overline{(g_0,g_1)},\overline{(g_1,g_2)}\right)=\left(\overline{(g_0,g_1)},\eta^{-1}(\overline{g_1})\right)$ with $(g_0,g_1,g_2)\in\f(G)[2]$, i.e., $(g_0,g_1)\in\f(G)[1]$ and $(g_1,g_2)\in\f(G)[1]$ as $\f(G)$ is $1$-step.
	The kernel of $\xi$ is $\G'\times(\G'\cap\G_1)=\G'\times 1$. It follows that $|\f'(G)[1]|=\frac{|\f(G)[1]|}{|\G'|}$ and so $$|\ker(\f'(G)[1]\to\f'(G)[0])|=\frac{|\f'(G)[1]|}{|\f'(G)[0]|}=\frac{\frac{|\f(G)[1]|}{|\G'|}}{\frac{|\f(G)[1]|}{|\G'|\cdot|\G_1|}}=|\G_1|=\ld(G),$$
	where the last equality above holds because $\f\in\operatorname{Emb}(G)$. 
	Since $\f'(G)[0]=\H$ by construction, we see that $\f'\in\operatorname{Emb}(G)$.
	
	By the minimality of $|\G|$ we have $|\H|\geq|\G|$. But $\H\simeq \G/\G'$ and so we must have $\G'=1$. By Lemma \ref{lemma: for Babbitt step1 sr} the map $\f(G)[1]\to \G,\ (g_0,g_1)\mapsto g_1$ is surjective. As $\G'=1$ it is an isomorphism. So $|\f(G)[1]|=|\G|$. But also $|\f(G)[0]|=|\G|$ and therefore $$\ld(G)=|\G_1|=\frac{|\f(G)[1]|}{|\f(G)[0]|}=1.$$
	By Lemma \ref{lemma: finite if and only if ld 1} this contradicts the assumption that $G$ is infinite. Thus $\mathcal{N}\neq 1$. 
	Since $\mathcal{N}\times\mathcal{N}\subseteq\f(G)[1]$ and $\f(G)$ is $1$-step, we see that $\mathcal{N}^\N\subseteq\f(G)^\N$.	
\end{proof}

The following corollary is a key step in our proof of the decomposition theorem. 

\begin{cor} \label{cor: if no normal then benign}
	Let $G$ be an infinite $\s$-connected expansive profinite group such that for every normal expansive subgroup $N$ of $G$ we have $\ld(N)=1$ or $\ld(N)=\ld(G)$. Then there exists an $\ell\in\N$ such that $G/\ker(\s^\ell)$ is isomorphic to a full one-sided group shift on a finite simple group.
\end{cor}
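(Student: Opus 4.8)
The plan is to apply the two preceding results to an embedding with a minimal alphabet and then use the special hypothesis to pin down its image precisely. First I would choose $\f\colon G\to\G^\N$ in $\operatorname{Emb}(G)$ with $|\G|$ minimal, say with $\ker(\f)=\ker(\s^\ell)$. By Proposition \ref{prop: if no normal then benign} there is a non-trivial normal subgroup $\mathcal{N}\trianglelefteq\G$ with $\mathcal{N}^\N\subseteq\f(G)$, and $\mathcal{N}^\N$ is a normal expansive subgroup of $\f(G)$ (it is closed, $\s$-stable, and normal in $\f(G)\leq\G^\N$ since $\mathcal{N}\trianglelefteq\G$). To locate the limit degree, consider the preimage $N=\f^{-1}(\mathcal{N}^\N)$, a normal expansive subgroup of $G$ with $N/\ker(\f)\simeq\mathcal{N}^\N$. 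Since $\ker(\f)=\ker(\s^\ell)$ is $\s$-infinitesimal, hence finite of limit degree $1$ (Lemmas \ref{lemma: sinfinitesimal} and \ref{lemma: finite if and only if ld 1}), multiplicativity of the limit degree (Lemma \ref{lemma: ld multiplicative on quotients}) gives $\ld(N)=\ld(\mathcal{N}^\N)=|\mathcal{N}|>1$. The hypothesis then forces $\ld(N)=\ld(G)$, so $|\mathcal{N}|=\ld(G)=\ld(\f(G))$.

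The key step is to upgrade the inclusion $\mathcal{N}^\N\subseteq\f(G)$ to an equality, i.e.\ to prove $\mathcal{N}=\G$. From $\ld(\f(G))=|\mathcal{N}|=\ld(\mathcal{N}^\N)$ and Lemma \ref{lemma: ld multiplicative on quotients} we get $\ld(\f(G)/\mathcal{N}^\N)=1$, so $\f(G)/\mathcal{N}^\N$ is finite (Lemma \ref{lemma: finite if and only if ld 1}). Now $\f(G)$ is $\s$-connected (Lemma \ref{lemma: quotient of sconnected is sconnected}, as $G$ is), hence so is the quotient $\f(G)/\mathcal{N}^\N$; being finite and $\s$-connected it is $\s$-infinitesimal (Lemma \ref{lemma: finite and sconnected implies infinitesimal}). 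Thus some power $\s^m$ acts trivially on $\f(G)/\mathcal{N}^\N$, which means $\s^m(\f(G))\subseteq\mathcal{N}^\N$, and in particular $(\s^m(\f(G)))[0]\subseteq\mathcal{N}$. On the other hand $(\s^m(\f(G)))[0]=\{x_m\mid x\in\f(G)\}=\G$ by Lemma \ref{lemma: for Babbitt step1 sr} (applied with $i=m$). Hence $\G\subseteq\mathcal{N}$, so $\mathcal{N}=\G$ and therefore $\f(G)=\mathcal{N}^\N=\G^\N$ is the full one-sided group shift on $\G$. Since $G/\ker(\s^\ell)=G/\ker(\f)\simeq\f(G)$, this realizes $G/\ker(\s^\ell)$ as the full shift on $\G$, and $|\G|=\ld(G)>1$ because $G$ is infinite (Lemma \ref{lemma: finite if and only if ld 1}).

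It remains to show that $\G$ is simple, and here I would transport the hypothesis along $\f$. Under the correspondence of Proposition \ref{prop: isomorphosm theorems} (ii), every normal expansive subgroup of $\G^\N\simeq G/\ker(\s^\ell)$ is the image of a normal expansive subgroup $\tilde N$ of $G$ containing $\ker(\s^\ell)$; since $\ker(\s^\ell)$ is finite, $\ld(\tilde N/\ker(\s^\ell))=\ld(\tilde N)\in\{1,\ld(G)\}=\{1,|\G|\}$. So every normal expansive subgroup of $\G^\N$ has limit degree $1$ or $|\G|$. If $\mathcal{M}$ were a proper non-trivial normal subgroup of $\G$, then $\mathcal{M}^\N$ would be a normal expansive subgroup of $\G^\N$ with $\ld(\mathcal{M}^\N)=|\mathcal{M}|\notin\{1,|\G|\}$, a contradiction; hence $\G$ is simple, completing the proof. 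The main obstacle is the middle step: marrying $\s$-connectedness (which upgrades the finite quotient $\f(G)/\mathcal{N}^\N$ to a $\s$-infinitesimal one) with the surjectivity supplied by Lemma \ref{lemma: for Babbitt step1 sr} in order to force $\mathcal{N}=\G$.
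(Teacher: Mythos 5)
Your proof is correct and follows essentially the same strategy as the paper's: a minimal element of $\operatorname{Emb}(G)$, Proposition \ref{prop: if no normal then benign} to produce a non-trivial $\mathcal{N}^\N\subseteq\f(G)$, the hypothesis on normal expansive subgroups to force limit degree $\ld(G)$, the chain ``limit degree $1$ $\Rightarrow$ finite, $\s$-connected $\Rightarrow$ $\s$-infinitesimal'' for the resulting quotient, then Lemma \ref{lemma: for Babbitt step1 sr} to force fullness, and the same pull-back of $\mathcal{M}^\N$ for simplicity. The only difference is organizational: the paper re-enters the proof of Proposition \ref{prop: if no normal then benign}, working with $N=\ker(\f')$ and the subgroups $\G',\G_1$ to conclude $\G'=\G_1=\G$, whereas you use only the proposition's statement, taking $N=\f^{-1}(\mathcal{N}^\N)$ and quotienting $\f(G)$ by $\mathcal{N}^\N$ to conclude $\mathcal{N}=\G$ --- a slightly more modular packaging of the same argument.
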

\begin{proof}
	We continue to use the notation of the proof of Proposition \ref{prop: if no normal then benign}. In particular, $\f\colon G\to\G^\N$ is an element of $\operatorname{Emb}(G)$ such that $|\G|$ is minimal. We will show that $\f$ is surjective and that $\G$ is simple.
	
	In the proof of Proposition \ref{prop: if no normal then benign} we have seen that $\mathcal{N}=\G'\cap \G_1$ is non-trivial. As $\mathcal{N}^\N\subseteq\f(G)$ maps to $1$ under $$\f(G)\to \H^\N,\ (g_0,g_1,\ldots)\mapsto \left(\overline{(g_i,g_{i+1})}\right)_{i\in\N},$$
	we deduce that $\f^{-1}(\mathcal{N}^\N)\subseteq\ker(\f')$. In particular, $N=\ker(\f')$ is infinite and therefore has limit degree strictly greater than $1$ (Lemma \ref{lemma: finite if and only if ld 1}).
	As $N$ is a normal expansive subgroup of $G$ we have, by assumption, $\ld(N)=\ld(G)$.

Thus $\ld(G/N)=1$ and therefore $G/N\simeq \f'(G)$ is finite (Lemma~\ref{lemma: finite if and only if ld 1}). Because $G$ is $\s$\=/connected, also $G/N$ is $\s$-connected (Lemma~\ref{lemma: quotient of sconnected is sconnected}). So $G/N$ is a finite $\s$-connected expansive profinite group and must therefore be $\s$-infinitesimal by Lemma \ref{lemma: finite and sconnected implies infinitesimal}. Thus there exists an $n\in\N$ such that $\s^n(\f'(G))=1$, i.e., $(\f(g)_i,\f(g)_{i+1})\in\G'\times \G_1$ for $i\geq n$. So $\f(g)_i\in \G'$ and $\f(g)_{i+1}\in\G_1$ for all $g\in G$ and $i\geq n$. On the other hand, by Lemma~\ref{lemma: for Babbitt step1 sr}, we have $\{\f(g)_i|\ g\in G\}=\G$ for every $i\in \N$. This shows that $\G'=\G$ and $\G_1=\G$. But then $\f(G)[1]=\G\times\G$ and $\f(G)$ is the full one-sided group shift on $\G$. So $G/\ker(\f)\simeq \G^\N$.
		
	 It remains to see that $\G$ is a simple group. Suppose $\G$ has a non-trivial proper normal subgroup $\mathcal{N}$. Then $\mathcal{N}^\N$ is a normal expansive subgroup of $\G^\N$ and $N=\f^{-1}(\mathcal{N}^\N)$ is a normal expansive subgroup of $G$. Since $N/\ker(\f)\simeq \mathcal{N}^\N$ we have
	$$\ld(N)=\frac{\ld(N)}{\ld(\ker(\f))}=\ld(\mathcal{N}^\N)=|\mathcal{N}|$$
	by Lemma \ref{lemma: ld multiplicative on quotients} and Example \ref{ex: ld of full shift}. As $1<|\mathcal{N}|<|\G|=\ld(G)$ we arrive at a contradiction.
\end{proof}

The following Corollary is a one-sided version of the Corollary to Theorem 2 in \cite{Kitchens:ExpansiveDynamicsOnZeroDimensionalGroups}.
\begin{cor}
	Let $G$ be a $\s$-connected expansive profinite group such that $p=\ld(G)$ is a prime number. Then there exists an $\ell\in\N$ such that $G/\ker(\s^\ell)$ is isomorphic to the full one-sided group shift on the finite cyclic group of order $p$.
\end{cor}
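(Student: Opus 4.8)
The plan is to deduce this as a special case of Corollary~\ref{cor: if no normal then benign}. The key observation is that when $\ld(G)=p$ is prime, the hypothesis on normal expansive subgroups of $G$ is automatically satisfied. Indeed, let $N$ be any normal expansive subgroup of $G$. By Lemma~\ref{lemma: ld multiplicative on quotients}, $\ld(N)$ divides $\ld(G)=p$. Since $p$ is prime, the only divisors are $1$ and $p$, so $\ld(N)\in\{1,p\}=\{1,\ld(G)\}$. This is precisely the hypothesis needed to invoke Corollary~\ref{cor: if no normal then benign}.

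First I would dispose of the finite case. If $G$ is finite, then by Lemma~\ref{lemma: finite if and only if ld 1} we would have $\ld(G)=1$, contradicting that $\ld(G)=p$ is prime (in particular $p>1$). Hence $G$ is infinite, and the standing hypothesis of Corollary~\ref{cor: if no normal then benign} that $G$ be infinite and $\s$-connected is met.

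Next I would apply Corollary~\ref{cor: if no normal then benign} directly: there exists an $\ell\in\N$ such that $G/\ker(\s^\ell)$ is isomorphic to a full one-sided group shift $\G^\N$ on a finite simple group $\G$. It then remains to identify $\G$. By Example~\ref{ex: ld of full shift} we have $\ld(\G^\N)=|\G|$. On the other hand, $\ker(\s^\ell)$ is $\s$-infinitesimal, hence finite by Lemma~\ref{lemma: sinfinitesimal}, so $\ld(\ker(\s^\ell))=1$ by Lemma~\ref{lemma: finite if and only if ld 1}. Therefore, using Lemma~\ref{lemma: ld multiplicative on quotients},
$$|\G|=\ld(\G^\N)=\ld(G/\ker(\s^\ell))=\frac{\ld(G)}{\ld(\ker(\s^\ell))}=\ld(G)=p.$$
Thus $\G$ is a finite simple group of order $p$, which forces $\G$ to be the cyclic group of order $p$, completing the proof.

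There is essentially no obstacle here: the entire content of the statement is already packaged in Corollary~\ref{cor: if no normal then benign}, and the only work is the trivial divisibility argument to verify the hypothesis plus a one-line order computation to pin down the simple group. The one point requiring a moment of care is confirming $G$ is infinite so that the corollary applies, but this follows immediately from $p$ being prime.
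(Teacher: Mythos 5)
Your proof is correct and follows essentially the same route as the paper: verify the hypotheses of Corollary~\ref{cor: if no normal then benign} (the divisibility of $\ld(N)$ by the prime $p$ via Lemma~\ref{lemma: ld multiplicative on quotients}, plus infiniteness of $G$), then identify $\G$ by the limit degree computation $|\G|=\ld(G/\ker(\s^\ell))=\ld(G)=p$. You simply spell out the verification steps that the paper leaves implicit, which is fine.
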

\begin{proof}
	As the assumptions of Corollary \ref{cor: if no normal then benign} are met, there exists an $\ell\in\N$ such that $G/\ker(\s^\ell)$ is isomorphic to the full one-sided group shift on a finite simple group $\G$. Because $p=\ld(G)=\ld(G/\ker(\s^\ell)=\ld(\G^\N)=|\G|$, it follows that $\G$ is cyclic of order $p$.
\end{proof}

The following lemma will be useful for the induction step in the proof of the main decomposition theorem. Roughly speaking, it allows us to remove the top $\s$-infinitesimal quotient in a subnormal series.
\begin{lemma} \label{lemma: exchange sinfinitesimal for Babbitt}
	Let $G$ be an expansive profinite group with a subnormal series
	$$G\supseteq G_1\supseteq G_2\supseteq \cdots\supseteq G_{n+1}$$
	such that $G/G_1$ is $\s$-infinitesimal, $G_i/G_{i+1}$ is isomorphic to a full one-sided group shift on a finite simple group for $i=1,\ldots n$ and $G_{n+1}$ is $\s$-infinitesimal. Then there exists a subnormal series
	$$G\supseteq H_1\supseteq H_2\supseteq \cdots\supseteq H_n$$
	such that $G/H_1$ and $H_i/H_{i+1}$ ($i=1,\ldots,n-1$) are isomorphic to full one-sided group shifts on finite simple groups and $H_n$ is $\s$-infinitesimal.
\end{lemma}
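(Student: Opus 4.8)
The plan is to isolate the essential mechanism as a \emph{swap lemma} and then feed it into an induction on $n$. The swap lemma asserts: if $E$ is an expansive profinite group with a normal expansive subgroup $N$ such that $N$ is isomorphic to a full one-sided group shift on a finite simple group and $E/N$ is $\s$-infinitesimal, then there is a normal expansive subgroup $E_1$ of $E$ with $E_1$ $\s$-infinitesimal and $E/E_1$ isomorphic to a full one-sided group shift on a finite simple group. In other words, in a two-step extension one may interchange the $\s$-infinitesimal layer and the full-shift layer so that the $\s$-infinitesimal part ends up at the bottom.

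To prove the swap lemma I would verify the hypotheses of Corollary \ref{cor: if no normal then benign} for $E$. First, $E$ is infinite (it contains the infinite group $N$) and $\s$-connected: $N$ is $\s$-connected as a full shift (Example \ref{ex: full shift is sconnected}), while $E/N$ is $\s$-connected since it is $\s$-infinitesimal, so Lemma \ref{lemma: sconnected in short exact sequence} applies. As $E/N$ is finite we have $\ld(E/N)=1$, hence $\ld(E)=\ld(N)=|\G|$ by Lemma \ref{lemma: ld multiplicative on quotients} and Example \ref{ex: ld of full shift}, writing $N\simeq \G^\N$. Now let $M$ be any normal expansive subgroup of $E$. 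Then $M\cap N$ is a normal expansive subgroup of $N\simeq\G^\N$, so by Lemma \ref{lemma: normal subgroup of shift on simple group} either $M\cap N=N$ or $M\cap N$ is finite. In the first case $N\subseteq M$ and $M/N\leq E/N$ is finite, whence $\ld(M)=\ld(N)=\ld(E)$ by Lemma \ref{lemma: ld multiplicative on quotients}. In the second case $M/(M\cap N)\simeq MN/N\leq E/N$ is finite by Proposition \ref{prop: isomorphosm theorems} and $M\cap N$ is finite, so $M$ is finite and $\ld(M)=1$ by Lemma \ref{lemma: finite if and only if ld 1}. Thus $\ld(M)\in\{1,\ld(E)\}$ for every such $M$, and Corollary \ref{cor: if no normal then benign} yields an $\ell\in\N$ with $E/\ker(\s^\ell)$ isomorphic to a full one-sided group shift on a finite simple group. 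Setting $E_1=\ker(\s^\ell)$, a normal expansive subgroup on which $\s^\ell$ annihilates every element and which is therefore $\s$-infinitesimal, completes the swap lemma.

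With the swap lemma in hand I would argue by induction on $n$. Apply it to $E=G/G_2$ with $N=G_1/G_2$ (a full shift on a finite simple group) and $E/N=G/G_1$ ($\s$-infinitesimal); via the correspondence of Proposition \ref{prop: isomorphosm theorems} this produces a normal expansive subgroup $H_1$ of $G$ with $G_2\subseteq H_1$, with $G/H_1$ isomorphic to a full one-sided group shift on a finite simple group and $H_1/G_2$ $\s$-infinitesimal. The subgroup $H_1$ then carries the series $H_1\supseteq G_2\supseteq\cdots\supseteq G_{n+1}$ of exactly the shape required by the lemma with $n$ replaced by $n-1$ (top quotient $H_1/G_2$ $\s$-infinitesimal, the $n-1$ middle quotients full shifts, bottom $G_{n+1}$ $\s$-infinitesimal), so the induction hypothesis provides $H_1\supseteq H_2\supseteq\cdots\supseteq H_n$ with $H_1/H_2,\ldots,H_{n-1}/H_n$ full shifts and $H_n$ $\s$-infinitesimal. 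Prepending $G/H_1$ gives the desired series $G\supseteq H_1\supseteq\cdots\supseteq H_n$. For the base case $n=1$ the swap lemma directly gives $H_1$ with $G/H_1$ a full shift and $H_1/G_2$ $\s$-infinitesimal; since $G_2=G_{n+1}$ is also $\s$-infinitesimal and an extension of a $\s$-infinitesimal group by a $\s$-infinitesimal group is again $\s$-infinitesimal (both are finite with a power of $\s$ trivial by Lemma \ref{lemma: sinfinitesimal}, and the relevant powers add), the group $H_1$ is $\s$-infinitesimal, finishing the base case.

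The main obstacle is the swap lemma, and within it the verification that \emph{every} normal expansive subgroup of $E$ has limit degree $1$ or $\ld(E)$; this is precisely what unlocks Corollary \ref{cor: if no normal then benign}, and it hinges on the sharp description of the normal expansive subgroups of a full shift on a finite simple group furnished by Lemma \ref{lemma: normal subgroup of shift on simple group}. The $\s$-connectedness of $E$ needed to apply that corollary similarly rests on Lemma \ref{lemma: sconnected in short exact sequence}. Once the swap lemma is established, the induction and the final merge of two $\s$-infinitesimal layers are routine.
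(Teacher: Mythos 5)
Your ``swap lemma'' is correct as stated, and its proof (verifying the hypotheses of Corollary \ref{cor: if no normal then benign} by intersecting an arbitrary normal expansive subgroup $M$ with $N$ and invoking Lemma \ref{lemma: normal subgroup of shift on simple group}, then taking $E_1=\ker(\s^\ell)$) is sound. The gap is in how you feed it into the induction: you apply it to $E=G/G_2$, but in a subnormal series $G_2$ is only required to be normal in $G_1$, not in $G$, so the quotient $G/G_2$ need not exist. This is not a removable technicality. For example, let $C=\Z/2\Z$, let $G_1=C^\N\times C$ with $\s(x,\epsilon)=(\s(x),0)$ (where $\s(x)$ is the shift of $x$), and let $G=G_1\rtimes\langle t\rangle$, where $t$ has order two and acts by $t(x,\epsilon)t^{-1}=(x+(\epsilon,0,0,\ldots),\epsilon)$; extend $\s$ to $G$ by $\s(gt^i)=\s(g)$ for $g\in G_1$. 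This $\s$ is a well-defined expansive endomorphism of $G$ because $\s(t(x,\epsilon)t^{-1})=\s(x,\epsilon)$. With $G_2=\{0\}\times C$, the hypotheses of the lemma hold with $n=1$: $G/G_1\cong C$ carries the trivial endomorphism, hence is $\s$-infinitesimal; $G_1/G_2$ is the full one-sided shift on the simple group $C$; and $G_2$ is $\s$-infinitesimal. Yet $t(0,1)t^{-1}=((1,0,0,\ldots),1)\notin G_2$, so $G_2$ is not normal in $G$ and ``$G/G_2$'' is not a group. The same defect recurs at every level of your induction (forming $H_1/G_3$, etc.), so the argument does not go through as written.

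For contrast, the paper's proof avoids quotients entirely and is much shorter: since $G/G_1$ is $\s$-infinitesimal there is an $r\in\N$ with $\s^r(G)\subseteq G_1$, i.e.\ $\s^{-r}(G_1)=G$, and one checks directly that
$$G=\s^{-r}(G_1)\supseteq\s^{-r}(G_2)\supseteq\cdots\supseteq\s^{-r}(G_{n+1})$$
is a subnormal series (normality of $\s^{-r}(G_{i+1})$ in $\s^{-r}(G_i)$ follows by applying $\s^r$ and using $G_{i+1}\trianglelefteq G_i$), that $\s^r$ composed with the projection induces isomorphisms $\s^{-r}(G_i)/\s^{-r}(G_{i+1})\simeq G_i/G_{i+1}$ (surjectivity uses that $\s$ is surjective on a full one-sided group shift), and that $\s^{-r}(G_{n+1})$ is still $\s$-infinitesimal. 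This preimage construction is exactly what sidesteps the normality issue your quotient-based reduction runs into; if you want to keep your swap-lemma strategy, you would need to replace each quotient step by such a preimage argument (your verification of the corollary's hypotheses can in fact be run on $G$ itself when $n=1$, using $M\cap G_1$ and its image in $G_1/G_2$, but for $n\geq 2$ the peeling-off of the top layer still requires something like $\s^{-r}$).
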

\begin{proof}
	Since $G/G_1$ is $\s$-infinitesimal, there exists an $r\in\N$ such that $\s^r(g)=1$ for all $g\in G/G_1$ (Lemma \ref{lemma: sinfinitesimal}). In other words, $\s^{-r}(G_1)=G$. We claim that the subnormal series
	$$G=\s^{-r}(G_1)\supseteq \s^{-r}(G_2)\supseteq\ldots\supseteq \s^{-r}(G_{n+1})$$
	has the required properties. As $\s$ is surjective on a full one-sided group shift we see that $\s^r\colon \s^{-r}(G_i)\to G_i/G_{i+1}$ is surjective and so $\s^{-r}(G_i)/\s^{-r}(G_{i+1})$ is isomorphic to $G_i/G_{i+1}$ for $i=1,\ldots,n$.
	
	Finally, if $s\in\N$ is such that $\s^s(g)=1$ for all $g\in G_{n+1}$, then $\s^{r+s}(g)=1$ for all $g\in \s^{-r}(G_{n+1})$. Thus $\s^{-r}(G_{n+1})$ is $\s$-infinitesimal.
\end{proof}

%
%
%
%
%
%

Finally, we are prepared to prove our main result.
\begin{theo} \label{theo: main}
	Let $G$ be an expansive profinite group. Then there exists a subnormal series
	$$G\supseteq G_1\supseteq G_2\supseteq \cdots\supseteq G_n$$
	such that $G_1=G^\sc$, $G_i/G_{i+1}$ is isomorphic to a full one-sided group shift on a finite simple group $\G_i$ for $i=1,\ldots,n-1$ and $G_n$ is $\s$-infinitesimal. If 
	$$G\supseteq H_1\supseteq H_2\supseteq \cdots\supseteq H_m$$
	is another subnormal series such that $H_1=G^\sc$, $H_i/H_{i+1}$ is isomorphic to a full one-sided group shift on a finite simple group $\H_i$ for $i=1,\ldots,m-1$ and $H_m$ is $\s$-infinitesimal, then $m=n$ and there exists a permutation $\pi$ such that $\G_i$ is isomorphic to $\H_{\pi(i)}$ for $i=1,\ldots,n-1$.
\end{theo}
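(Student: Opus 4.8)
The plan is to prove existence by induction on the limit degree and uniqueness by a Schreier / Jordan--H\"older argument. For existence I would first peel off the top: set $G_1=G^\sc$, so that by Lemma~\ref{lemma: sconnected component} the quotient $G/G_1$ is finite with $\s$ an automorphism, and it remains to decompose the $\s$-connected group $G^\sc$. Thus it suffices to show, by induction on $\ld(\Gamma)$, that every $\s$-connected expansive profinite group $\Gamma$ admits a series $\Gamma=\Gamma_1\supseteq\cdots\supseteq\Gamma_n$ whose factors $\Gamma_i/\Gamma_{i+1}$ are full one-sided group shifts on finite simple groups and whose bottom term $\Gamma_n$ is $\s$-infinitesimal. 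If $\ld(\Gamma)=1$, then $\Gamma$ is finite (Lemma~\ref{lemma: finite if and only if ld 1}) and $\s$-connected, hence $\s$-infinitesimal (Lemma~\ref{lemma: finite and sconnected implies infinitesimal}), and the one-term series works. If $\ld(\Gamma)>1$ but every normal expansive subgroup $N$ satisfies $\ld(N)\in\{1,\ld(\Gamma)\}$, then Corollary~\ref{cor: if no normal then benign} supplies an $\ell$ with $\Gamma/\ker(\s^\ell)\cong\G_1^\N$ and $\ker(\s^\ell)$ $\s$-infinitesimal, a two-term series.

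The remaining case is that some normal expansive subgroup has limit degree strictly between $1$ and $\ld(\Gamma)$. Replacing it by its $\s$-identity component (normal in $\Gamma$ by Lemma~\ref{lemma: Nsc is normal}, of the same limit degree by Lemma~\ref{lemma: ld for sc}) I may take a $\s$-connected $N$ with $1<\ld(N)<\ld(\Gamma)$. Then $\Gamma/N$ is $\s$-connected (Lemma~\ref{lemma: quotient of sconnected is sconnected}) and, by multiplicativity of the limit degree (Lemma~\ref{lemma: ld multiplicative on quotients}), both $N$ and $\Gamma/N$ have limit degree $<\ld(\Gamma)$, so the induction hypothesis applies to each. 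I would lift the series of $\Gamma/N$ through the correspondence of Proposition~\ref{prop: isomorphosm theorems}~(ii) and stack the series of $N$ underneath. The sole defect is that the $\s$-infinitesimal bottom factor of the lifted $\Gamma/N$-series reappears as an \emph{interior} $\s$-infinitesimal factor sitting just above $N$. This is precisely the configuration repaired by Lemma~\ref{lemma: exchange sinfinitesimal for Babbitt}, which absorbs that factor and yields a series of the required form.

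For uniqueness, both given series begin with $G^\sc$, so I would append $1$ to each and view them as subnormal series of $G^\sc$ terminating at $1$; by Proposition~\ref{prop: schreier refinement} they have equivalent refinements. The crux is to extract the \emph{infinite} factors. By the correspondence theorem, the factors of a refinement inserted inside a factor $G_i/G_{i+1}=\G_i^\N$ are exactly the factors of a subnormal series of $\G_i^\N$, and Lemma~\ref{lemma: normal subgroup of shift on simple group} shows that every proper normal expansive subgroup of $\G_i^\N$ is finite with quotient $\cong\G_i^\N$; hence such a series has exactly one infinite factor, namely $\G_i^\N$ itself, all others being finite. The $\s$-infinitesimal bottom term $G_n$ is finite (Lemma~\ref{lemma: sinfinitesimal}) and so contributes only finite factors. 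Consequently the infinite factors of the refined first series are exactly $\G_1^\N,\ldots,\G_{n-1}^\N$ and those of the second are $\H_1^\N,\ldots,\H_{m-1}^\N$. Since equivalence matches factors up to isomorphism and isomorphisms preserve finiteness, these multisets coincide, forcing $n=m$ together with a permutation $\pi$ such that $\G_i^\N\cong\H_{\pi(i)}^\N$; then $\G_i\cong\H_{\pi(i)}$ by Lemma~\ref{lemma: recover group}.

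The main obstacle I anticipate is conceptual rather than computational: the factors $\G_i^\N$ are \emph{not} simple objects in the category of expansive profinite groups, so the classical Jordan--H\"older theorem cannot be invoked directly, and a naive Schreier argument would entangle the genuine infinite factors with the finite debris produced by refining them. The decisive point is the observation above---that refining a full shift on a finite simple group produces a single infinite factor isomorphic to the original---which lets finiteness, equivalently the limit degree, serve as the invariant that separates the $\G_i^\N$ from the finite noise. On the existence side the analogous care is localized entirely in Lemma~\ref{lemma: exchange sinfinitesimal for Babbitt}, which guarantees that every interior factor ends up either a simple shift or an absorbable $\s$-infinitesimal piece.
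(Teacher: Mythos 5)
Your proposal is correct and follows essentially the same route as the paper's own proof: existence by induction on the limit degree, using Corollary~\ref{cor: if no normal then benign} when no intermediate normal subgroup exists and otherwise splitting along a $\s$-connected normal subgroup $N$, stacking the inductively obtained series for $G/N$ and $N$, and repairing the interior $\s$-infinitesimal factor with Lemma~\ref{lemma: exchange sinfinitesimal for Babbitt}; uniqueness via Schreier refinement, counting infinite factors through Lemma~\ref{lemma: normal subgroup of shift on simple group}, and recovering the simple groups with Lemma~\ref{lemma: recover group}. The only (immaterial) deviation is the base case, which you settle with Lemma~\ref{lemma: finite and sconnected implies infinitesimal} after first passing to $G^\sc$, whereas the paper invokes Example~\ref{ex: main for finite} directly.
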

\begin{proof}
	We first establish the existence of the decomposition by induction on $\ld(G)$. If $\ld(G)=1$, then $G$ is finite (Lemma \ref{lemma: finite if and only if ld 1}) and the theorem holds with $n=1$ by Example~\ref{ex: main for finite}.
	
	So we may assume that $\ld(G)>1$. Replacing $G$ with $G^\sc$, we may also assume that $G$ is $\s$-connected. (Note that by Lemma \ref{lemma: ld for sc} the limit degree remains unchanged.)
If there does not exist a normal expansive subgroup $N$ of $G$ such that $1<\ld(N)<\ld(G)$, then there exists a decomposition of the desired form with $n=2$ by Corollary \ref{cor: if no normal then benign}.
	
So we may assume that there exists a normal expansive subgroup $N$ of $G$ such that $1<\ld(N)<\ld(G)$. We know from Lemma \ref{lemma: Nsc is normal} that also $N^\sc$ is a normal expansive subgroup of $G$. Moreover $\ld(N)=\ld(N^\sc)$ by Lemma \ref{lemma: ld for sc}. Replacing $N$ by $N^\sc$ we may thus assume that $N$ is $\s$-connected.
	
Because $\ld(G/N)=\ld(G)/\ld(N)<\ld(G)$ we can apply the induction hypothesis to $G/N$. As $G$ is $\s$-connected, also $G/N$ is $\s$-connected (Lemma \ref{lemma: quotient of sconnected is sconnected}). So, using Proposition~\ref{prop: isomorphosm theorems}, we obtain a subnormal series
	$$G/N\supseteq G_1/N\supseteq\cdots\supseteq G_n/N$$ for $G/N$,
	where $G\supseteq G_1\supseteq\cdots\supseteq G_n\supseteq N$ is a subnormal series for $G$ such that $G_n/N$ is $\s$-infinitesimal and $(G_i/N)/(G_{i+1}/N)=G_i/G_{i+1}$ is isomorphic to a full one-sided group shift on a finite simple group for $i=0,\ldots,n-1$, where $G_0:=G$.
	
	As $\ld(N)<\ld(G)$, we can also apply the induction hypothesis to $N$. Since $N$ is $\s$-connected, we obtain a subnormal series
	$$N\supseteq N_1\supseteq\cdots\supseteq N_m,$$
	with $N_m$ $\s$-infinitesimal and $N_i/N_{i+1}$ isomorphic to a full one-sided group shift on a finite simple group for $i=0,\ldots,m-1$ ($N_0:=N$). By Lemma~\ref{lemma: exchange sinfinitesimal for Babbitt}, the subnormal series
	$$G_n\supseteq N\supseteq N_1\supseteq\cdots\supseteq N_m$$
	can be replaced by a subnormal series
	$$G_n\supseteq H_1\supseteq\cdots\supseteq H_m,$$
	with $G_n/H_1$ and $H_i/H_{i+1}$ ($i=1,\ldots,m-1$) isomorphic to a full one-sided group shift on a finite simple group and $H_m$ $\s$-infinitesimal. Then
	$$G\supseteq G_1\supseteq\cdots\supseteq G_n\supseteq H_1\supseteq\cdots\supseteq H_m$$ is a subnormal series for $G$ of the required form.
	
	\medskip
	
	We next address the uniqueness: Assume that
	$$G^\sc=G_1\supseteq G_2\supseteq\ldots\supseteq G_n$$
	and 
		$$G^\sc=H_1\supseteq H_2\supseteq\ldots\supseteq H_m$$
	are subnormal series of $G^\sc$ such that $G_i/G_{i+1}$ is isomorphic to a full one-sided group shift on a simple group $\G_i$ ($i=1,\ldots,n-1$), $H_i/H_{i+1}$ is isomorphic to a full one-sided group shift on a finite simple group $\H_i$ ($i=1,\ldots,m-1$) and the groups $G_n$ and $H_m$ are $\s$-infinitesimal.
	
	By Proposition \ref{prop: schreier refinement} these two subnormal series for $G^\sc$ have equivalent refinements. Let
	\begin{equation} \label{eq: refinement 1}
G_1\supseteq G_{1,1}\supseteq G_{1,2}\supseteq\ldots\supseteq G_{1,n_1}=G_2\supseteq \ldots\supseteq G_n\supseteq G_{n,1}\supseteq\ldots\supseteq G_{n,n_n}=1
\end{equation}
	and
	\begin{equation} \label{eq: refinement 2}
H_1\supseteq H_{1,1}\supseteq H_{1,2}\supseteq\ldots\supseteq H_{1,m_1}= H_2\supseteq \ldots\supseteq H_m \supseteq H_{m,1}\supseteq\ldots\supseteq H_{m,m_m}=1
	\end{equation}
	be such refinements. We may assume that all of the above inclusions are proper. Note that $G_{i,1}/G_{i+1}$ is a proper normal expansive subgroup of $G_i/G_{i+1}$ for $i=1,\ldots,n-1$. By Lemma \ref{lemma: normal subgroup of shift on simple group} the group $G_{i,1}/G_{i+1}$ is finite. It follows that all the factor groups $G_{i,j}/G_{i,j+1}$ ($j=1,\ldots,n_i-1$) are finite, whereas $G_i/G_{i,1}$ is infinite. Thus, the number of infinite factor groups of the subnormal series (\ref{eq: refinement 1}) is exactly $n-1$. Similarly, the number of infinite factor groups of the subnormal series (\ref{eq: refinement 2}) is $m-1$. Because the subnormal series (\ref{eq: refinement 1}) and (\ref{eq: refinement 2}) are equivalent, we see that $n=m$. Moreover, by Lemma \ref{lemma: normal subgroup of shift on simple group} the $n-1$ infinite factor groups of (\ref{eq: refinement 1}) are isomorphic to full one-sided group shifts on the finite simple groups $\G_1,\ldots,\G_{n-1}$. Similarly, the $n-1=m-1$ infinite factor groups of (\ref{eq: refinement 2}) are isomorphic to full one-sided group shifts on the finite simple groups $\H_1,\ldots,\H_{m-1}$. The equivalence of (\ref{eq: refinement 1}) and (\ref{eq: refinement 2}) together with Lemma \ref{lemma: recover group} shows that there exists a permutation $\pi$ such that $\G_i\simeq \H_{\pi(i)}$ for $i=1,\ldots,n-1$.	
\end{proof}

\begin{rem} \label{rem: explain theoremA}
	For simplicity, Theorem \ref{theo: main} is stated in the introduction without reference to the $\s$-identitiy component and the claim concerning the uniqueness of the group $G_1$ made there needs some justification: Let $G\supseteq G_1\supseteq\ldots\supseteq G_n$ be a subnormal series for an expansive profinite group $G$ such that $G_i/G_{i+1}$ is isomorphic to a full one-sided group shift on a finite simple group for $i=1,\ldots,n-1$, $G/G_1$ is finite with $\s\colon G/G_1\to G/G_1$ an automorphism and $G_n$ is $\s$-infinitesimal. Then $G_1=G^\sc$.
\end{rem}
\begin{proof}
	Full one-sided group shifts and $\s$-infinitesimal expansive profinite groups are $\s$-connected. So it follows inductively, using Lemma \ref{lemma: sconnected in short exact sequence}, that all the $G_i$ are $\s$-connected. In particular $G_1$ is $\s$-connected. Thus the claim follows from Lemma \ref{lemma: sconnected converse}.
\end{proof}

We next consider some examples that illustrate Theorem \ref{theo: main}. The following example shows that our decomposition theorem can be interpreted as a generalization of the classical Jordan-H\"{o}lder theorem.

\begin{ex}
	Let $G$ be the full one-sided group shift on the finite group $\G$ and let $\G=\G_1\supseteq \G_2\supseteq\ldots\supseteq \G_n=1$ be a decomposition series for $\G$. Then 
	$$G=\G_1^\N\supseteq \G_2^\N\supseteq\ldots\supseteq\G_n^\N=1$$
	is a subnormal series of $G$ with the properties of Theorem \ref{theo: main}. Note that $G$ is $\s$-connected by Example \ref{ex: full shift is sconnected}.
\end{ex}

\begin{ex}
	Let $\G$ be a finite simple group and let $\G'$ be any finite group containing $\G$. Then $G=\G'\times\G\times\G\times\ldots$ is an expansive profinite group under $\s\colon G\to G,\ (g',g_1,g_2,\ldots)\mapsto (g_1,g_2,\ldots)$.
	Moreover, $G$ is $\s$-connected (e.g., by Lemma \ref{lemma: sconnected component} (iii)) and $G_2=\G'\times 1\times 1\ldots\leq G$ is a normal $\s$-infinitesimal expansive subgroup of $G$ such that $G/G_2$ is isomorphic to the full one-sided group shift on $\G$. Thus $G=G_1\supseteq G_2$ is a decomposition as in Theorem~\ref{theo: main}.
\end{ex}

\begin{ex}
	This example is taken from \cite{Kitchens:ExpansiveDynamicsOnZeroDimensionalGroups} (Example 4). However, we use a multiplicative notation, so that subgroups can easily be described by equations (rather than by listing elements). Let us write $\Gm=\mathbb{C}^\times=\mathbb{C}\smallsetminus\{0\}$ for the multiplicative group of the complex numbers and consider $\Gm^\N$ as a group under componentwise multiplication. For $g=(g_0,g_1,\ldots)\in\Gm^\N$ we set $\s(g)=(g_1,g_2,\ldots)$ as usual. Let
	$$G=\left\{(g,h)\in(\Gm^2)^\N|\ g^4=1,\ h^2=1,\ \s(h)=g^2h\right\}.$$
	Then $G$ is a subgroup of $\G^\N$, where $\G=\{a\in\mathbb{C}^\times|\ a^4=1\}\times \{b\in\mathbb{C}^\times|\ b^2=1\}$ is a product of a cyclic group of order four and a cyclic group of order two. Indeed $G$ is a $1$-step group shift on $\G$.
	Set $G_2=\{(g,h)\in G|\ h=1\}=\{(g,1)\in\Gm|\ g^2=1\}$. So $G_2$ is a full one-sided group shift on a cyclic group of order two. 
	The map $G\to \Gm^\N,\ (g,h)\to h$ has kernel $G_2$ and image $\{h\in\Gm^\N|\ h^2=1\}$. Thus $G/G_2$ is isomorphic to a full one-sided group shift on a cyclic group of order two. So there exists a short exact sequence
	\begin{equation} \label{eq: exact sequence}
	1 \to C_2^\N \to G\to C_2^\N\to 1,
	\end{equation} where $C_2$ is the cyclic group of order two.
	By Lemma \ref{lemma: sconnected in short exact sequence} and Example \ref{ex: full shift is sconnected} the expansive profinite group $G$ is $\s$-connected. So $$G=G_1\supseteq G_2\supseteq G_3=1$$ is a subnormal series as in Theorem \ref{theo: main}. We note that the exact sequence (\ref{eq: exact sequence}) is not split. Indeed, $G$ is not isomorphic to a full one-sided group shift (\cite[Example 4]{Kitchens:ExpansiveDynamicsOnZeroDimensionalGroups}).
	
\end{ex}


\begin{ex} We use the same notation as in the previous example.
	Set
	$$G=\big\{(g_1,g_2,g_3)\in (\Gm^3)^\N|\ g_1^4=g_2^4=g_3^2=1,\ \s(g_1)=g_2^2,\ \s(g_3)=g_3\big\}.$$
	Then $G$ is a one-sided group shift on the finite group $\G$, where $\G$ is a direct product of two cyclic groups of order four and a cyclic group of order two. 
	Let $G_1$, $G_2$ and $G_3$ be the subgroups of $G$ given by
	$$G_1=\left\{(g_1,g_2,1)\in (\Gm^3)^\N|\ g_1^4=g_2^4=1,\ \s(g_1)=g_2^2,\right\},$$
		$$G_2=\left\{(g_1,g_2,1)\in (\Gm^3)^\N|\ g_1^4=g_2^2=1,\ \s(g_1)=1,\right\},$$
	and $$G_3=\left\{(g_1,1,1)\in (\Gm^3)^\N|\ g_1^4=1,\ \s(g_1)=1 \right\}.$$
	We will show that
	$$G\supseteq G_1\supseteq G_2\supseteq G_3$$
	is a subnormal series as in Theorem \ref{theo: main}.

	Clearly $G_3$ is $\s$-infinitesimal. The map $G_1\to \Gm^\N,\ (g_1,g_2,1)\mapsto g_2$ has kernel $G_3$ and image $\{g\in\Gm^\N|\ g^4=1\}$, a full one-sided group shift on a cyclic group of order four. The full one-sided group shift $\{g\in\Gm^\N|\ g^2=1\}$ contained in the image corresponds to $G_2$. So $G_1/G_2$ and $G_2/G_3$ are both full one-sided group shifts on a cyclic group of order two. 
%
	
	So it only remains to show that $G_1=G^\sc$. As in Remark \ref{rem: explain theoremA} it follows that $G_1$ is $\s$-connected. According to Lemma \ref{lemma: sconnected converse} it suffices to show that $G/G_1$ is finite with $\s\colon G/G_1\to G/G_1$ bijective. But $G/G_1$ is a group of order two with $\s$ the identity map.
\end{ex}

\section{Topological conjugacy}

Recall that two topological spaces $(X,\s)$ and $(Y,\s)$ equipped with continuous endomorphisms are \emph{topologically conjugate} if there exists a homeomorphism $X\to Y$ such that
$$
\xymatrix{
	X \ar[r] \ar_\s[d] & Y \ar^\s[d] \\
	X \ar[r] & Y
}
$$
commutes. In \cite{Kitchens:ExpansiveDynamicsOnZeroDimensionalGroups} B. Kitchens showed that every profinite group equipped with an expansive \emph{automorphism} is topologically conjugate to $\mathcal{A}^\Z\times \mathcal{F}$, a full two-sided shift on a finite set $\mathcal{A}$ times a finite (discrete) set $\mathcal{F}$ equipped with an automorphism.

%
%
%
%
%

We establish here a similar result for expansive \emph{endomorphisms}: If $G$ is an expansive profinite group, then there exists an $\ell\in\N$ such that $\s^\ell(G)$ is topologically conjugate to $\mathcal{A}^\N\times\mathcal{F}$, a full one-sided shift on a finite set $\mathcal{A}$ times a finite set $\mathcal{F}$ equipped with an automorphism. 
%
%
%
%
%
We will need the following preparatory lemma.

\begin{lemma} \label{lemma: split}
	Let $G$ be an expansive profinite group and $N$ a normal expansive subgroup of $G$ that is isomorphic to a full one-sided group shift. Then $G$ is topologically conjugate to $G/N\times N$.
\end{lemma}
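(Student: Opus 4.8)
The plan is to reduce the statement to the construction of a continuous $\s$-equivariant section of the quotient map $\pi\colon G\to G/N$, and then to solve the resulting cocycle equation by exploiting the full-shift structure of $N$.

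First I would reduce to a section. Since topological conjugacy only requires a homeomorphism commuting with $\s$ (not a group isomorphism), suppose we have a continuous map $s\colon G/N\to G$ with $\pi\circ s=\id$ and $s(\s q)=\s(s(q))$ for all $q\in G/N$. Then
$$\Psi\colon G/N\times N\longrightarrow G,\quad (q,x)\mapsto s(q)x$$
is continuous, and for each $q$ the map $x\mapsto s(q)x$ is a bijection from $N$ onto the coset $\pi^{-1}(q)=s(q)N$. Since $G$ is the disjoint union of these cosets, $\Psi$ is a continuous bijection of profinite (hence compact Hausdorff) spaces, and therefore a homeomorphism. Moreover $\Psi(\s q,\s x)=s(\s q)\s(x)=\s(s(q))\s(x)=\s(s(q)x)=\s(\Psi(q,x))$, using that $N$ is $\s$-stable, so $\Psi$ is a topological conjugacy between $G/N\times N$ and $G$. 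Thus everything comes down to producing the equivariant section $s$.

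To build $s$, I would start from an arbitrary continuous section $t\colon G/N\to G$ of $\pi$, which exists by a standard property of profinite groups (the quotient map $\pi$ admits a continuous section as a map of topological spaces; see \cite{RibesZaleskii:ProfiniteGroups}); such a $t$ need not commute with $\s$. I then look for $s$ in the form $s(q)=t(q)n(q)$ with $n\colon G/N\to N$ continuous. A direct computation shows that $s$ commutes with $\s$ if and only if
$$n(\s q)=d(q)\,\s(n(q))\qquad(q\in G/N),$$
where $d(q):=t(\s q)^{-1}\s(t(q))$. Applying $\pi$ and using $\pi\s=\s\pi$ gives $d(q)\in N$, and $d\colon G/N\to N$ is continuous. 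So the problem is reduced to solving this cohomological equation for $n$ inside $N$.

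The key step, and the main obstacle, is solving this cocycle equation, and this is exactly where the hypothesis that $N$ is a full shift is used. Fix an isomorphism identifying $N$ with $\G^\N$ so that $\s|_N$ becomes the shift, and write $n(q)=(n(q)_0,n(q)_1,\ldots)$ and $d(q)=(d(q)_0,d(q)_1,\ldots)$. Since multiplication in $\G^\N$ is componentwise and $\s$ is the shift, the equation reads coordinatewise as
$$n(\s q)_j=d(q)_j\,n(q)_{j+1}\qquad(j\in\N).$$
The crucial observation is that the zeroth coordinates $q\mapsto n(q)_0$ are completely unconstrained, so I would set $n(q)_0:=1$ for all $q$ and then define the remaining coordinates recursively by $n(q)_{j+1}:=d(q)_j^{-1}\,n(\s q)_j$. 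Induction on $j$ shows that each $q\mapsto n(q)_j$ is a finite product of continuous $\G$-valued functions, hence continuous, so that $n\colon G/N\to\G^\N=N$ is continuous; by construction it satisfies the equation. Here it is essential that $\G^\N$ is the \emph{full} shift: every sequence in $\G^\N$ is a legitimate element of $N$, so the freely produced sequences $n(q)$ always lie in $N$, which is precisely what makes the cocycle solvable. This yields the desired equivariant section $s=t\cdot n$ and completes the argument.
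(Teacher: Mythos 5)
Your proof is correct and is essentially the paper's own argument in different packaging: the paper likewise starts from an arbitrary continuous (non-equivariant) section $\varphi$ of $\pi$, arrives at the same cocycle equation with the same coboundary $\psi(h)=\varphi(\s(h))^{-1}\s(\varphi(h))$, and solves it coordinatewise in the full shift with the zeroth coordinate normalized to $1$ --- your recursion $n(q)_{j+1}=d(q)_j^{-1}n(\s q)_j$ unwinds to exactly the paper's explicit formula $\alpha(h)_i=\psi(h)_{i-1}^{-1}\psi(\s(h))_{i-2}^{-1}\cdots\psi(\s^{i-1}(h))_0^{-1}$. The only difference is organizational (you package the solution as a $\s$-equivariant section, the paper as a conjugacy between the transported dynamics $\s'$ and the product dynamics on $G/N\times N$), and the resulting homeomorphism $(q,x)\mapsto\varphi(q)\alpha(q)x$ is the same map.
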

\begin{proof}
	To be clear, the topology on $G/N\times N$ is the product topology and the endomorphism $\s$ on $G/N\times N$ sends $(h,n)\in G/N\times N$ to $(\s(h),\s(n))$. 
	
	To begin with, choose a continuous section $\varphi$ of the canonical map $\pi\colon G\to G/N$, i.e., a continuous map $\varphi\colon G/N\to G$ such that $\pi\varphi=\id_{G/N}$. Such a map always exists by \cite[Proposition 2.2.2]{RibesZaleskii:ProfiniteGroups}. (Note that we do not require that $\varphi$ commutes with $\s$ or is a group homomorphism.) The map $\eta\colon G/N\times N\to G,\ (h,n)\mapsto\varphi(h)n$ is a homeomorphism with inverse $\eta^{-1}\colon G\to G/N\times N,\ g\mapsto (\pi(g), \varphi(\pi(g))^{-1}g)$. For $(h,n)\in G/N\times N$ we have 
	\begin{align*}
	\eta^{-1}(\s(\eta(h,n))) & =\eta^{-1}(\s(\varphi(h))\s(n))=(\s(\pi(\varphi(h)),\varphi(\pi(\s(\varphi(h))\s(n)))^{-1}\s(\varphi(h))\s(n))=\\
	& =(\s(h), \varphi(\s(h))^{-1}\s(\varphi(h))\s(n)).
	\end{align*}
	Thus $G$ is topologically conjugate to $(G/N\times N, \s')$ with $\s'\colon G/N\times N\to G/N\times N$ given by $\s'(h,n)=(\s(h),\psi(h)\s(n))$, where $\psi\colon G/N\to N,\ h\mapsto \varphi(\s(h))^{-1}\s(\varphi(h))$.
	
	So it suffices to show that $(G/N\times N,\s')$ and $(G/N\times N,\s)$ are topologically conjugate. We may assume that $N=\mathcal{N}^\N$ for some finite group $\mathcal{N}$. So for $h\in G/N$ the element $\psi(h)=(\psi(h)_i)_{i\in\N}$ is a sequence in $\mathcal{N}$. Define a continuous map $\alpha\colon G/N\to N=\mathcal{N}^\N$ by $\alpha(h)_0=1$ and $\alpha(h)_i=\psi(h)^{-1}_{i-1}\psi(\s(h))^{-1}_{i-2}\ldots\psi(\s^{i-1}(h))_0^{-1}$ for $i\geq 1$. Then $\psi(h)_i\alpha(h)_{i+1}=\alpha(\s(h))_i$ for all $i\in\N$, i.e., $\psi(h)\s(\alpha(h))=\alpha(\s(h))$ for all $h\in G/N$.
	
	The map $\xi\colon G/N\times N\to G/N\times N,\ (h,n)\mapsto (h,\alpha(h)n)$ is a homeomorphism and $$\xi(\s(h,n))=(\s(h),\alpha(\s(h))\s(n))=(\s(h), \psi(h)\s(\alpha(h))\s(n))=\s'(h, \alpha(h)n)=\s'(\xi(h,n))$$
	for all $(h,n)\in G/N\times N$. Thus $\xi$ is a conjugacy between $(G/N\times N,\s)$ and $(G/N\times N, \s')$.
\end{proof}

Let $G$ be an expansive profinite group. Note that for $\ell\in\N$, the kernel $\ker(\s^\ell)$ of $\s^\ell\colon G\to G$ is a $\s$-infinitesimal expansive subgroup of $G$. In particular, $\ker(\s^\ell)$ is finite. Moreover $\s^\ell(G)$ is an expansive subgroup of $G$ and $\s^\ell$ induces an isomorphism $G/\ker(\s^\ell)\to\s^\ell(G)$ of expansive profinite groups.

\begin{theo} \label{theo: topological conjugacy}
	Let $G$ be an expansive profinite group. Then there exists an $\ell\in \N$ such that $\s^\ell(G)$ is topologically conjugate to $\mathcal{A}^\N\times \mathcal{F}$, where $\mathcal{A}$ is a finite set and $\mathcal{F}$ is a finite set equipped with a bijective map $\s\colon \mathcal{F}\to \mathcal{F}$ having a fixed point.
\end{theo}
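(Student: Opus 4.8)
The plan is to prove the statement by induction on $\ld(G)$, peeling off one full group shift at a time with the splitting Lemma~\ref{lemma: split} and letting a single application of $\s^\ell$ absorb all the $\s$-infinitesimal debris at the very end. Concretely, I would prove: for every expansive profinite group $G$ there is an $\ell$ such that $\s^\ell(G)$ is topologically conjugate to $\mathcal{A}^\N\times\mathcal{F}$, where $\mathcal{F}=G/G^\sc$ is finite, $\s$ acts bijectively on $\mathcal{F}$ (Lemma~\ref{lemma: sconnected component}), and $1\cdot G^\sc$ is a $\s$-fixed point. For the base case $\ld(G)=1$ the group $G$ is finite (Lemma~\ref{lemma: finite if and only if ld 1}); by Example~\ref{ex: main for finite} we have $G^\sc=\{g\in G\mid \s^n(g)=1 \text{ for some } n\}$, so $\ker(\s^\ell)=G^\sc$ for $\ell\gg0$ and hence $\s^\ell(G)\cong G/G^\sc=\mathcal{F}$, which is the desired form with $\mathcal{A}$ a one-point set.

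For the inductive step $\ld(G)>1$, suppose I have found a normal expansive subgroup $M$ of $G$ with $\ld(M)>1$ that is \emph{isomorphic} to a full group shift $\mathcal{N}^\N$. Being a full shift, $M$ is $\s$-connected (Example~\ref{ex: full shift is sconnected}), so $M\subseteq G^\sc$, and then $(G/M)^\sc=G^\sc/M$ with $(G/M)/(G^\sc/M)\cong G/G^\sc$ finite and $\s$ bijective (Lemmas~\ref{lemma: quotient of sconnected is sconnected} and \ref{lemma: sconnected converse}); in particular the $\mathcal{F}$ attached to $G/M$ equals $\mathcal{F}$. By Lemma~\ref{lemma: split}, $G$ is topologically conjugate to $G/M\times\mathcal{N}^\N$. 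Since a topological conjugacy intertwines $\s$, it restricts to a conjugacy between $\s^\ell(G)$ and $\s^\ell(G/M)\times\s^\ell(\mathcal{N}^\N)=\s^\ell(G/M)\times\mathcal{N}^\N$, using that $\s$ is surjective on the full shift $\mathcal{N}^\N$. As $\ld(G/M)=\ld(G)/\ld(M)<\ld(G)$ (Lemma~\ref{lemma: ld multiplicative on quotients}), the induction hypothesis applies to $G/M$ and, enlarging $\ell$ if necessary, gives that $\s^\ell(G/M)$ is topologically conjugate to $\mathcal{A}'^\N\times\mathcal{F}$ (the extra powers of $\s$ are harmless because both factors are $\s$-surjective). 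Reshuffling coordinates by the obvious $\s$-equivariant homeomorphism, $\s^\ell(G)$ is topologically conjugate to $\mathcal{A}'^\N\times\mathcal{F}\times\mathcal{N}^\N$, i.e.\ to $(\mathcal{A}'\times\mathcal{N})^\N\times\mathcal{F}$, which closes the induction with $\mathcal{A}=\mathcal{A}'\times\mathcal{N}$.

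The one genuine obstacle is the existence, for infinite $G$, of the normal \emph{and} full-group-shift subgroup $M$ used above. The subnormal series produced by Theorem~\ref{theo: main} only gives subgroups normal in the next term of the series, not in $G$; moreover $G^\sc$ always contains nontrivial finite normal $\s$-infinitesimal subgroups, such as $\ker(\s)$, so one cannot simply split off a minimal normal expansive subgroup. I expect to obtain $M$ by an analogue of the passage to a chief series for finite groups: choose a normal expansive subgroup $N$ of $G$ that is infinite of minimal limit degree, replace it by $N^\sc$ (still normal in $G$ by Lemma~\ref{lemma: Nsc is normal}, still infinite by Lemma~\ref{lemma: ld for sc}), and then prove that such a minimal infinite normal subgroup is ``characteristically $\s$-simple'', hence a full group shift. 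For this I would fix a full shift quotient $N\twoheadrightarrow\mathcal{S}^\N$ with kernel $L$, pass to the normal core $\bigcap_{g\in G}gLg^{-1}$ of $L$ in $G$, use the minimality of $\ld(N)$ to force this core to be finite, and combine with Lemma~\ref{lemma: normal subgroup of shift on simple group} to identify $N$ as a full group shift on a product of isomorphic copies of $\mathcal{S}$. Establishing this—guaranteeing the minimal infinite normal subgroup exists and controlling the finite $\s$-infinitesimal noise introduced by the normal core—is the crux of the argument.
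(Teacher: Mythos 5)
Your induction skeleton is in fact the paper's: the base case, the use of Lemma~\ref{lemma: split} to split off a normal full-shift subgroup, the application of the induction hypothesis to the quotient (whose limit degree drops by Lemma~\ref{lemma: ld multiplicative on quotients}), and the recombination into $(\mathcal{A}'\times\mathcal{N})^\N\times\mathcal{F}$ all match the paper's proof of Theorem~\ref{theo: topological conjugacy}. But the step you yourself single out as the crux --- producing the normal subgroup $M$ isomorphic to a full group shift --- is a genuine gap, and your plan for filling it cannot work, because you insist on finding $M$ normal in $G$ itself. The paper instead first passes to $\s^\ell(G)\cong G/\ker(\s^\ell)$ and finds the normal full-shift subgroup \emph{there}: this is exactly Proposition~\ref{prop: if no normal then benign}, whose proof (the $\operatorname{Emb}(G)$ machinery, minimizing the alphabet $|\G|$ over embeddings whose kernel equals some $\ker(\s^\ell)$) is the technical heart of the theorem and is not recovered by your sketch.

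The order of operations matters because an infinite expansive profinite group need not contain \emph{any} normal expansive subgroup isomorphic to a full group shift, so your ``minimal infinite normal subgroup plus normal core'' strategy can have nothing to start from. Take the paper's example following Theorem~\ref{theo: main} with $\G=A_5$ embedded as a \emph{non-normal} subgroup of $\G'=A_6$, i.e.\ $G=A_6\times A_5\times A_5\times\cdots$ with the shift endomorphism. Since all factors are non-abelian simple, every closed normal subgroup of $G$ is a sub-product $N_0\times\prod_{i\in J}A_5$ with $N_0\in\{1,A_6\}$ and $J\subseteq\N_{\geq 1}$ (project to each coordinate and pull nontrivial coordinate entries into the subgroup using commutators and simplicity); $\s$-stability forces $J$ to be an initial segment of $\N_{\geq 1}$ or all of it, and forces $N_0=A_6$ whenever $J\neq\emptyset$. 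Hence the normal expansive subgroups of $G$ are finite groups together with $G$ itself, and $G$ is not isomorphic to a full shift: $\s(G)=A_5\times A_5\times\cdots\subsetneq G$, whereas $\s$ is surjective on every full shift. (Note that $\s(G)$ \emph{is} a full shift, but it fails to be normal in $G$ precisely because $A_5$ is not normal in $A_6$.) So no admissible $M$ with $\ld(M)>1$ exists and your induction cannot begin, while the paper's argument applies here with $\ell=1$. Any correct completion of your proposal must, like the paper, dispose of the $\s$-infinitesimal kernel $\ker(\s^\ell)$ \emph{before} hunting for the normal full-shift subgroup; that existence statement is precisely Proposition~\ref{prop: if no normal then benign}, and it still has to be proved.
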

\begin{proof}
	We will prove the theorem by induction on $\ld(G)$. If $\ld(G)=1$, then $G$ is finite (Lemma~\ref{lemma: finite if and only if ld 1}) and for large enough $\ell$, the map $\s\colon \s^\ell(G)\to\s^\ell(G)$ is bijective. Thus the theorem holds with $\mathcal{A}$ a one-element set and $\mathcal{F}=\s^\ell(G)$. (The identity element $1\in\mathcal{F}$ is a fixed point.)
	
	Assume that $\ld(G)>1$. By Proposition \ref{prop: if no normal then benign} there exists an $\ell\in\N$ and a normal expansive subgroup $N$ of $\s^\ell(G)$ such that $N$ is isomorphic to a full one-sided group shift on a non-trivial finite group. From Lemma \ref{lemma: split} we obtain that $\s^\ell(G)$ is topologically conjugate to $\s^\ell(G)/N\times N$. We have $\ld(\s^\ell(G)/N)=\frac{\ld(\s^\ell(G))}{\ld(N)}<\ld(G)$ and so we can apply the induction hypothesis to $G'=\s^\ell(G)/N$: There exists an $\ell'\in\N$, a finite set $\mathcal{A}'$ and a finite set $\mathcal{F}'$ equipped with an automorphism having a fixed point such that $\s^{\ell'}(G')$ is topologically conjugate to $\mathcal{A}'^\N\times\mathcal{F}'$.
	
	Since $\s^\ell(G)$ is topologically conjugate to $G'\times N$, we see that $\s^{\ell+\ell'}(G)$ is topologically conjugate to $\s^{\ell'}(G'\times N)=\s^{\ell'}(G')\times\s^{\ell'}(N)\simeq\mathcal{A}'^\N\times \mathcal{F}'\times N$. So, if $N\simeq \mathcal{N}^\N$, then $\s^{\ell+\ell'}(G)$ is topologically conjugate to $(\mathcal{A}'\times\mathcal{N})^\N\times\mathcal{F}'$.
\end{proof}

	Note that a full one-sided group shift is $\s$-connected and has a surjective $\s$. The following corollary provides a converse for expansive profinite groups:

\begin{cor} \label{cor: expansive group is full shift}
	Let $G$ be a $\s$-connected expansive profinite group with $\s\colon G\to G$ surjective. Then $G$ is topologically conjugate to a full one-sided shift. 
\end{cor}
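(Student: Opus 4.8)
The plan is to reduce the statement to Theorem~\ref{theo: topological conjugacy} and then use $\s$-connectedness to collapse the finite factor $\mathcal{F}$ to a single point. First, since $\s\colon G\to G$ is surjective we have $\s^\ell(G)=G$ for every $\ell\in\N$, so Theorem~\ref{theo: topological conjugacy} applies to $G$ itself and produces a topological conjugacy between $G$ and $\mathcal{A}^\N\times\mathcal{F}$, where $\mathcal{A}$ is a finite set and $\mathcal{F}$ is a finite set equipped with a bijection $\s\colon\mathcal{F}\to\mathcal{F}$ having a fixed point.

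Next I would observe that a topological conjugacy is a homeomorphism commuting with $\s$, hence it maps closed $\s$-stable sets to closed $\s$-stable sets in both directions; in other words it is a homeomorphism for the respective $\s$-topologies. Therefore $\s$-connectedness is invariant under topological conjugacy, and since $G$ is $\s$-connected, so is $\mathcal{A}^\N\times\mathcal{F}$. It remains to analyze the $\s$-topology of this product. As $\s$ is a bijection of the finite set $\mathcal{F}$, it partitions $\mathcal{F}$ into a disjoint union of $\s$-orbits $o_1,\ldots,o_k$, and correspondingly $\mathcal{A}^\N\times\mathcal{F}=\bigsqcup_{j=1}^{k}(\mathcal{A}^\N\times o_j)$. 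Each piece $\mathcal{A}^\N\times o_j$ is nonempty (because $G$, and hence $\mathcal{A}^\N$, is nonempty), closed (each $o_j$ is closed in the discrete finite set $\mathcal{F}$), and $\s$-stable (since $\s$ preserves each orbit). Hence $\s$-connectedness forces $k=1$, so $\s$ acts on $\mathcal{F}$ as a single orbit.

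Finally, the theorem guarantees that $\s$ has a fixed point in $\mathcal{F}$, and a single $\s$-orbit that contains a fixed point must consist of exactly that one point; thus $|\mathcal{F}|=1$. Consequently $G$ is topologically conjugate to $\mathcal{A}^\N$, a full one-sided shift, as claimed. The only point requiring any argument is the transport of the $\s$-topology along a conjugacy, and this is immediate from $\s$-equivariance; the rest is routine bookkeeping about the orbits of a permutation of a finite set.

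\begin{proof}
Since $\s$ is surjective, $\s^\ell(G)=G$ for all $\ell\in\N$, so by Theorem~\ref{theo: topological conjugacy} the group $G$ is topologically conjugate to $\mathcal{A}^\N\times\mathcal{F}$, where $\mathcal{F}$ is a finite set with a bijection $\s$ admitting a fixed point. A topological conjugacy is a homeomorphism commuting with $\s$; it therefore carries closed $\s$-stable sets to closed $\s$-stable sets in both directions and is thus a homeomorphism for the $\s$-topologies. As $G$ is $\s$-connected, so is $\mathcal{A}^\N\times\mathcal{F}$.

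Write $\mathcal{F}$ as the disjoint union of the $\s$-orbits $o_1,\ldots,o_k$. Each $o_j$ is $\s$-stable and closed in the discrete set $\mathcal{F}$, so $\mathcal{A}^\N\times\mathcal{F}=\biguplus_{j=1}^{k}(\mathcal{A}^\N\times o_j)$ exhibits $\mathcal{A}^\N\times\mathcal{F}$ as a disjoint union of nonempty closed $\s$-stable subsets. By $\s$-connectedness we must have $k=1$. Since $\s$ has a fixed point in $\mathcal{F}$ and the unique orbit contains it, that orbit is a single point, so $|\mathcal{F}|=1$. Hence $G$ is topologically conjugate to $\mathcal{A}^\N$, a full one-sided shift.
\end{proof}
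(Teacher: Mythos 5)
Your proof is correct and follows essentially the same route as the paper: apply Theorem~\ref{theo: topological conjugacy} (noting surjectivity makes $\s^\ell(G)=G$), transport $\s$-connectedness across the conjugacy, and use a decomposition of $\mathcal{A}^\N\times\mathcal{F}$ into closed $\s$-stable pieces together with the fixed point to force $|\mathcal{F}|=1$. The only cosmetic difference is that the paper splits $\mathcal{F}$ directly into $\{f\}$ and $\mathcal{F}\smallsetminus\{f\}$ rather than into all $\s$-orbits, and leaves the invariance of $\s$-connectedness under conjugacy implicit, which you spell out.
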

\begin{proof}
	Since $\s\colon G\to G$ is surjective, it follows from Theorem \ref{theo: topological conjugacy} that $G$ is topologically conjugate to $\mathcal{A}^\N\times \mathcal{F}$. Let $f\in\mathcal{F}$ be a fixed point. Then $\mathcal{A}\times \mathcal{F}$ is the disjoint union of the $\s$-closed sets $\mathcal{A}\times \{f\}$ and $\mathcal{A}\times (\mathcal{F}\smallsetminus\{f\})$. As $G$ is $\s$-connected, we must have $\mathcal{F}=\{f\}$. Thus $G$ is topologically conjugate to $\mathcal{A}^\N$.
\end{proof}

Because full one-sided shifts are $\s$-irreducible, Corollary \ref{cor: expansive group is full shift} implies that a $\s$-connected expansive profinite group $G$ with $\s\colon G\to G$ surjective is $\s$-irreducible.

%
%
%
%
%
%
%
%
%
%
%

\section{Expansive automorphisms}

In this section we establish an analog of Theorem \ref{theo: main} for expansive automorphisms in place of expansive endomorphisms. There are no immediate implications between results about profinite groups equipped with an expansive endomorphism and results about profinite groups equipped with an expansive automorphism. Indeed, if $G$ is a profinite group and $\s\colon G\to G$ is a map that is simultaneously an expansive endomorphism and an expansive automorphism, then $G$ is finite by Corollary \ref{cor: s injective implies finite}. However, it seems possible to obtain a proof of an analog of Theorem \ref{theo: main} for expansive automorphisms by carefully going through all the steps of the proof of Theorem \ref{theo: main} and performing some minor modifications here and there. 
Indeed, the situation is simpler in the automorphism setting because there are no $\s$-infinitesimal groups in this context.

There is, however, a slightly more elegant path that we will follow here. There is a universal construction $G\rightsquigarrow G^*$ that associates to any profinite group $G$ equipped with an expansive endomorphism, a profinite group $G^*$ equipped with an expansive automorphism. Moreover, any profinite group equipped with an expansive automorphism is of the form $G^*$ for some $G$. In this fashion, results about profinite groups with expansive endomorphisms can be transformed to results about profinite groups with expansive automorphisms. This way we are able to avoid having to enter into the details of the proof of the existence part of Theorem \ref{theo: main} again. 

\medskip

We begin by recalling the two-sided setup in symbolic dynamics. See \cite{Kitchens:SymbolicDynamics} or \cite{LindMarcus:IntroductionToSymbolicDynamisAndCoding}. Let $\mathcal{A}$ be a finite set. We consider $\mathcal{A}^\Z$ as a topological space via the product topology of the discrete topology on $\mathcal{A}$. The topological space $\mathcal{A}^\Z$ together with the homeomorphism $\s\colon \mathcal{A}^\Z\to  \mathcal{A}^\Z$, given by $\s(a_{n\in\Z})=(a_{n+1})_{n\in\Z}$ is the \emph{full two-sided shift} on the alphabet $\mathcal{A}$.
A \emph{two-sided shift} on $\mathcal{A}$ is a closed subset $X$ of $\mathcal{A}^\Z$ such that $\s(X)=X$.
A \emph{word} or \emph{block} of length $i$ is a sequence of $i$ elements from $\mathcal{A}$. A two-sided shift $X$ on $\mathcal{A}$ is a (two-sided) \emph{subshift of finite type} if there exists a finite set $\mathcal{F}$ of blocks such that $X$ consists of all elements of $\mathcal{A}^\Z$ that do not contain any blocks from $\mathcal{F}$. If $\Gamma$ is a directed graph with set of vertices equal to $\mathcal{A}$, then the set $X^*_\Gamma\subseteq\mathcal{A}^\Z $ consisting of all biinfinite sequences in $\mathcal{A}$ that trace out a biinfinite directed path in $\Gamma$ is a subshift of finite type.

In case the alphabet $\mathcal{A}=\G$ is a finite group, $\G^\Z$ inherits a group structure. In fact, $\G^\Z$ is a profinite group and $\s\colon \G^\Z\to \G^\Z$ is an automorphism (of profinite groups). A \emph{two-sided group shift} $G$ on $\G$ is a two-sided shift on $\G$ that is a subgroup of $\G^\Z$. In particular, $G$ is a profinite group and $\s\colon G\to G$ is an autmorphism. It is shown in \cite{Kitchens:ExpansiveDynamicsOnZeroDimensionalGroups} that every two-sided group shift is a subshift of finite type. If $\Gamma$ is a directed group graph on a finite group $\G$, then $G_\Gamma^*=X_\Gamma^*$ is a two-sided group shift.

\medskip

In this section we consider expansive endomorphisms and expansive automorphisms of profinite groups. To have a clear notational distinction between the two, we add a ``$\ast$'' to the notation whenever we are dealing with expansive automorphisms.
We continue to use the notation of the previous sections. In particular, an expansive profinite group is a profinite group together with an expansive endomorphism (Definition \ref{defi: expansive profinite group}).

\begin{defi} \label{defi: astexpansive group}
	An automorphism $\s\colon G\to G$ of a profinite group $G$ is an \emph{expansive automorphism} if there exists a neighborhood $U$ of $1$ such that $\cap_{n\in\Z}\s^n(U)=1$. A \emph{$\ast$expansive profinite group} is a profinite group $G$ together with an expansive automorphism $\s\colon G\to G$.
\end{defi}
As for expansive profinite groups, once can assume that $U$ is an open normal subgroup of $G$. 
The study of $\ast$expansive profinite groups was initiated by B. Kitchens in \cite{Kitchens:ExpansiveDynamicsOnZeroDimensionalGroups}.

A \emph{topological $\s$-group} is a topological group $G$ equipped with an endomorphism (i.e., a continuous group homomorphism) $\s\colon G\to G$. A morphism $G\to H$ of topological $\s$-groups is a continuous group homomorphism such that 
$$
\xymatrix{
	G \ar[r] \ar_\s[d] & H \ar^\s[d] \\
	G \ar[r] & H	
}
$$
commutes. A morphism of $\ast$expansive profinite groups is a morphism of topological $\s$\=/groups.

We will need the $\ast$-analogs of the elementary results from Section \ref{subsec: Group theory for expansive profinite groups}. The proofs are very similar to Section \ref{subsec: Group theory for expansive profinite groups}. We therefore omit the details.

\begin{lemma} \label{lemma: astexpansive subgroups and quotients}
	Let $G$ be a $\ast$expansive profinite group.
	\begin{enumerate}
		\item If $H$ is a closed subgroup of $G$ such that $\s(H)=H$, then $H$ (with the induced topology and automorphism) is a $\ast$expansive profinite group. In this case, we call $H$ a $\ast$expansive subgroup of $G$.
		\item If $N$ is a normal $\ast$expansive subgroup of $G$, then $G/N$ (with the quotient topology and induced automorphism) is a $\ast$expansive profinite group and the canonical map $G\to G/N$ is a morphism of $\ast$expansive profinite groups. 
	\end{enumerate}	
\end{lemma}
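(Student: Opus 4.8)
The plan is to mirror the proof of Lemma~\ref{lemma: expansive subgroups and quotients}, the one genuinely new point being that the induced maps must now be shown to be \emph{automorphisms} rather than mere endomorphisms. Throughout I would use that, since $\s$ is an automorphism, $\s^{-1}$ is again a continuous automorphism of $G$ with $\s^{-1}(H)=\s^{-1}(\s(H))=H$, and that $\bigcap_{n\in\Z}\s^{-n}(U)=\bigcap_{n\in\Z}\s^{n}(U)$, so the expansiveness condition is symmetric under $n\mapsto -n$.

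For part (i), a closed subgroup $H$ of a profinite group is profinite (\cite[Proposition 2.2.1]{RibesZaleskii:ProfiniteGroups}). Because $\s(H)=H$, the restriction $\s|_H\colon H\to H$ is a continuous bijection whose inverse is $\s^{-1}|_H$, so $\s|_H$ is an automorphism of $H$. If $U$ is an open normal subgroup of $G$ with $\bigcap_{n\in\Z}\s^{-n}(U)=1$, I would take $U'=H\cap U$, an open subgroup of $H$. Since $\s^n(h)\in H$ for every $h\in H$, one has $(\s|_H)^{-n}(U')=H\cap\s^{-n}(U)$, whence $\bigcap_{n\in\Z}(\s|_H)^{-n}(U')=H\cap\bigcap_{n\in\Z}\s^{-n}(U)=1$, so $H$ is $\ast$expansive.

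For part (ii), the quotient $G/N$ by a closed normal subgroup is profinite, and $\s(N)=N$ makes $\overline{\s}\colon G/N\to G/N,\ gN\mapsto\s(g)N$ a well-defined continuous homomorphism commuting with $G\to G/N$; the map induced by $\s^{-1}$ is its continuous inverse (here $\s(N)=N$ together with the injectivity of $\s$ gives injectivity of $\overline{\s}$), so $\overline{\s}$ is an automorphism and $G\to G/N$ is a morphism of $\ast$expansive profinite groups. It remains to prove $\overline{\s}$ expansive. For this I would reduce, exactly as in Lemma~\ref{lemma: expansive profinite group is group shift} (whose two-sided analogue is proved verbatim), to the case where $G\leq\G^\Z$ is a two-sided group shift on a finite group $\G$. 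By Kitchens' theorem that every two-sided group shift is of finite type (\cite{Kitchens:ExpansiveDynamicsOnZeroDimensionalGroups}), there is an $n\geq 0$ such that $N$ is exactly the set of $g\in G$ all of whose central blocks $(g_{i-n},\dots,g_{i+n})$, $i\in\Z$, lie in the projection $\mathcal{N}$ of $N$ onto the coordinates $-n,\dots,n$. Writing $\H$ for the projection of $G$ onto these coordinates, $\mathcal{N}$ is normal in $\H$ (as $N$ is normal in $G$), and I would form the morphism $\f\colon G\to(\H/\mathcal{N})^\Z,\ g\mapsto\big(\overline{(g_{i-n},\dots,g_{i+n})}\big)_{i\in\Z}$ of $\ast$expansive profinite groups. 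By the choice of $n$ its kernel is precisely $N$, so $G/N\cong\f(G)$; and $\f(G)$ is a closed $\s$-stable subgroup of $(\H/\mathcal{N})^\Z$, i.e.\ a two-sided group shift, which is $\ast$expansive (take the window $\{x\mid x_0=1\}$). Hence $\overline{\s}$ is expansive.

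The step I expect to be the crux is the expansiveness in part (ii). The naive candidate $\overline{U}=UN/N$ does \emph{not} work: one computes $\bigcap_{n\in\Z}\overline{\s}^{-n}(\overline{U})=\big(\bigcap_{n\in\Z}\s^{-n}(U)N\big)/N$, and although $\bigcap_{n\in\Z}\s^{-n}(U)=1$, the intersection $\bigcap_{n\in\Z}\s^{-n}(U)N$ may be strictly larger than $N$, since a single coset representative modulo $N$ need not lie in all of the $\s^{-n}(U)$ simultaneously. This is exactly why I route the argument through the finite-type structure: passing to a two-sided group shift and realizing $N$ as the kernel of a block map furnishes an explicit window (the zeroth-coordinate condition on $\f(G)$) that does separate points of $G/N$, which the purely topological intersection argument fails to provide. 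All remaining verifications (that $\f$ is continuous, a homomorphism, commutes with $\s$, and has kernel $N$) are routine and identical to their one-sided counterparts.
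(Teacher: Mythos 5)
Your proof is correct and is essentially the argument the paper has in mind: the paper omits the proof of this lemma, stating only that it is "very similar" to the one-sided case (Lemma~\ref{lemma: expansive subgroups and quotients}), and your writeup is precisely that adaptation, substituting Kitchens' theorem that two-sided group shifts are of finite type for the paper's Proposition~\ref{prop: group shift is of finite type} and then running the same block-map construction $\f\colon G\to(\H/\mathcal{N})^\Z$ with kernel $N$. Your remark on why the naive window $UN/N$ fails correctly identifies the subtlety that forces this route in both the one-sided and two-sided settings.
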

We note that point (ii) of Lemma \ref{lemma: astexpansive subgroups and quotients} is proved in far greater generality in \cite{GloecknerRaja:ExpansiveAutomorphismsOfTotallyDisconnectedLocallyCompactGroups}.

\begin{prop}[Isomorphism theorems for $\ast$expansive profinite groups] \label{prop: astisomorphosm theorems} \mbox{} 
	\begin{enumerate} 
		\item Let $\f\colon G\to H$ be a morphism of $\ast$expansive profinite groups. Then $\f(G)$ is a $\ast$expansive subgroup of $H$, $\ker(\f)$ is a normal $\ast$expansive subgroup of $G$ and  the canonical map $G/\ker(\f)\to \f(G)$ is an isomorphism of $\ast$expansive profinite groups.
		\item Let $N$ be a normal $\ast$expansive subgroup of a $\ast$expansive group $G$ and $\pi\colon G\to G/N$ the canonical map. Then the map
		$$\{\text{$\ast$expansive subgroups of $G$ containing $N$}\}\longrightarrow\{\text{$\ast$expansive subgroups of $G/N$}\},$$ $H\mapsto \pi(H)=H/N$
		is a bijection with inverse $H'\mapsto \pi^{-1}(H')$. Moreover $H$ is normal in $G$ if and only if $H/N$ is normal in $G/N$ and in that case $G/H\simeq (G/N)/(H/N)$.
		\item Let $H$ and $N$ be $\ast$expansive subgroups of a $\ast$expansive profinite group $G$ such that $H$ normalizes $N$. Then $HN$ is an $\ast$expansive subgroup of $G$, $H\cap N$ is a normal $\ast$expansive subgroup of $H$ and $HN/N\simeq H/H\cap N$.
	\end{enumerate}
\end{prop}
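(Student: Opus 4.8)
The plan is to imitate the proof of Proposition~\ref{prop: isomorphosm theorems}, taking the three isomorphism theorems for profinite groups as given (see e.g.\ \cite[Section 1.2]{FriedJarden:FieldArithmetic}; the crucial input is that every continuous homomorphism of profinite groups is closed, so that images and quotients are again profinite and all the canonical bijections are homeomorphisms). Granting this, the underlying topological-group content of each of the three parts is free, and what remains is to check two things: that the subgroups manufactured by these constructions are again $\ast$expansive, and that the canonical maps intertwine the automorphisms $\s$.

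The only step needing real care---and the sole difference from the endomorphism setting---is the stability condition. An ordinary expansive subgroup only has to satisfy $\s(\cdot)\subseteq\cdot$, and inclusions of this shape fall out immediately from $\s$ being a homomorphism that commutes with $\f$. A $\ast$expansive subgroup, however, must satisfy the equality $\s(\cdot)=\cdot$. The mechanism for upgrading inclusion to equality is that $\s$ is an automorphism: by Lemma~\ref{lemma: astexpansive subgroups and quotients}~(i) its restriction to any $\ast$expansive subgroup is again bijective, so one may invert $\s$ freely. I expect this to be the main (and essentially only) obstacle, and it is mild.

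Concretely, for (i) surjectivity of $\s$ on $G$ gives $\s(\f(G))=\f(\s(G))=\f(G)$, so $\f(G)$ is $\s$-stable in the strong sense; for $\ker(\f)$ one has $\s(\ker(\f))\subseteq\ker(\f)$ at once, and for the reverse inclusion, given $g\in\ker(\f)$ choose $h\in G$ with $\s(h)=g$, whence $\s(\f(h))=\f(g)=1$ and injectivity of $\s$ on $H$ forces $\f(h)=1$, so $g\in\s(\ker(\f))$. The profinite isomorphism $G/\ker(\f)\to\f(G)$ visibly commutes with $\s$. Part (ii) is the correspondence theorem: it is transported along $\pi$ and $\pi^{-1}$ exactly as for profinite groups, and these maps carry the condition $\s(\cdot)=\cdot$ back and forth, while normality and the quotient isomorphism are inherited from the profinite statement. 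For (iii) one has $\s(HN)=\s(H)\s(N)=HN$ and $\s(H\cap N)\subseteq\s(H)\cap\s(N)=H\cap N$, with equality again obtained by applying $\s^{-1}$; the diamond isomorphism $HN/N\simeq H/(H\cap N)$ is the profinite one, which commutes with $\s$ by construction.
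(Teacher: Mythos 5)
Your proposal is correct and takes essentially the same approach as the paper: the paper omits the proof of this proposition entirely, saying only that it is ``very similar'' to the endomorphism case (Proposition~\ref{prop: isomorphosm theorems}), whose proof consists of citing the isomorphism theorems for profinite groups (key input: continuous homomorphisms of profinite groups are closed maps) and checking compatibility with $\sigma$. Your explicit verification that $\sigma$-stability upgrades from the inclusion $\sigma(\cdot)\subseteq\cdot$ to the equality $\sigma(\cdot)=\cdot$ by using surjectivity of $\sigma$ on the source and injectivity on the target is exactly the routine modification the paper leaves to the reader, and your arguments for it are sound.
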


%
%

Subnormal series, their refinements and equivalence of subnormal series for $\ast$expansive profinite groups are defined as for expansive profinite groups.

\begin{prop} \label{prop: astschreier refinement}
	Any two subnormal series of a $\ast$expansive profinite group have equivalent refinements. 
\end{prop}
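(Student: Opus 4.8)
The plan is to transport the proof of Proposition~\ref{prop: schreier refinement} essentially verbatim into the automorphism setting; the only genuine prerequisite is a Zassenhaus lemma for $\ast$expansive profinite groups, after which the standard Schreier argument applies unchanged.

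First I would establish the $\ast$-analog of Lemma~\ref{lemma: Zassenhaus}: given $\ast$expansive subgroups $N_1, H_1, N_2, H_2$ of a $\ast$expansive profinite group with $N_i$ normal in $H_i$ for $i=1,2$, the subgroup $N_1(H_1\cap N_2)$ is normal in $N_1(H_1\cap H_2)$, the subgroup $N_2(N_1\cap H_2)$ is normal in $N_2(H_1\cap H_2)$, and
\[
\frac{N_1(H_1\cap H_2)}{N_1(H_1\cap N_2)}\simeq\frac{N_2(H_1\cap H_2)}{N_2(N_1\cap H_2)}.
\]
The normality assertions are purely group-theoretic and hence carry over with no change. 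For the isomorphism I would invoke the third isomorphism theorem in the form of Proposition~\ref{prop: astisomorphosm theorems}~(iii), applied to $H_1\cap H_2$ and $N_1(H_1\cap N_2)$ (noting that $H_1\cap H_2$ normalizes $N_1(H_1\cap N_2)$), and then simplify the two sides using the abstract-group identities $(H_1\cap H_2)N_1(H_1\cap N_2)=N_1(H_1\cap H_2)$ and $(H_1\cap H_2)\cap N_1(H_1\cap N_2)=(H_1\cap N_2)(N_1\cap H_2)$, exactly as in the proof of Lemma~\ref{lemma: Zassenhaus}. Interchanging the indices $1$ and $2$ yields a symmetric isomorphism onto the same quotient, whence the claim.

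With this in hand I would run the usual Schreier construction. Given subnormal series $G=G_0\supseteq\ldots\supseteq G_n=1$ and $G=H_0\supseteq\ldots\supseteq H_m=1$, set $G_{i,j}=G_{i+1}(G_i\cap H_j)$ for $i=0,\ldots,n-1$, $j=0,\ldots,m$, and $H_{j,i}=H_{j+1}(G_i\cap H_j)$ for $j=0,\ldots,m-1$, $i=0,\ldots,n$. These interpolate the two given series into refinements of equal length, and the Zassenhaus lemma furnishes $G_{i,j}/G_{i,j+1}\simeq H_{j,i}/H_{j,i+1}$, so the two refinements are equivalent.

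The only point requiring attention — and the sole place where the automorphism hypothesis enters — is the verification that every subgroup appearing above is genuinely $\ast$expansive, that is, \emph{stable} rather than merely forward-stable under $\s$. I do not expect this to be an obstacle: because $\s$ is an automorphism, the $\s$-stable closed subgroups are closed under intersection and under (normalized) products, and the induced automorphism descends to all the relevant quotients. All of this is already absorbed into Proposition~\ref{prop: astisomorphosm theorems}, so the argument is pure bookkeeping, which is precisely why the details are omitted in the text.
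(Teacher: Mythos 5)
Your proposal is correct and matches the paper's intent exactly: the paper omits the proof precisely because it is the verbatim transport of Proposition~\ref{prop: schreier refinement} (Zassenhaus lemma via Proposition~\ref{prop: astisomorphosm theorems}~(iii), then the Schreier construction $G_{i,j}=G_{i+1}(G_i\cap H_j)$, $H_{j,i}=H_{j+1}(G_i\cap H_j)$) that you carry out. Your attention to the one genuinely new point---that intersections and normalized products of subgroups with $\s(H)=H$ again satisfy $\s(H)=H$, using injectivity of $\s$ for intersections---is exactly the bookkeeping the paper absorbs into its $\ast$-isomorphism theorems.
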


The following proposition allows us to associate a $\ast$expansive profinite group $G^*$ to any expansive profinite group $G$. 

\begin{prop}
	Let $G$ be an expansive profinite group. There exists a $\ast$expansive profinite group $G^*$ together with a morphism $G^*\to G$ of topological $\s$-groups satisfying the following universal property: If $H$ is a $\ast$expansive profinite group and $H\to G$ is a morphism of topological $\s$-groups, then there exists a unique morphism $H\to G^*$ such that
	$$
	\xymatrix{
		H \ar[rr] \ar@{..>}[rd] & & G \\
		& G^* \ar[ru] &
	}
	$$
	commutes.
\end{prop}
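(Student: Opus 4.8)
The plan is to realize $G^*$ as the inverse limit of the system $G\xleftarrow{\s}G\xleftarrow{\s}G\xleftarrow{\s}\cdots$, that is,
$$G^*=\Big\{(g_i)_{i\in\N}\in G^\N \ \big| \ \s(g_{i+1})=g_i \text{ for all } i\in\N\Big\},$$
which is a closed subgroup of the profinite group $G^\N$ and hence again profinite. On $G^*$ I would define $\s^*$ by $\s^*\big((g_i)_i\big)=(\s(g_0),g_0,g_1,\ldots)$. A direct check shows that $\s^*$ is a well-defined continuous endomorphism and that the left shift $(g_i)_i\mapsto(g_{i+1})_i$ is a two-sided inverse for it, so $\s^*$ is an \emph{automorphism} of $G^*$. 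The structure morphism $G^*\to G$ is the projection $\pi_0\colon(g_i)_i\mapsto g_0$ onto the zeroth coordinate, and the identity $\pi_0\s^*=\s\pi_0$ (immediate from the definition of $\s^*$) shows that $\pi_0$ is a morphism of topological $\s$-groups.

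First I would verify that $\s^*$ is expansive. Choose an open normal subgroup $U$ of $G$ with $\bigcap_{n\in\N}\s^{-n}(U)=1$ and set $U^*=\pi_0^{-1}(U)$, an open subgroup of $G^*$. A short induction gives $\pi_0\big((\s^*)^n((g_i)_i)\big)=\s^n(g_0)$ for $n\ge 0$, so the non-negative indices appearing in $\bigcap_{n\in\Z}(\s^*)^{-n}(U^*)$ already force $g_0\in\bigcap_{n\in\N}\s^{-n}(U)=1$. The key point is that the full intersection $S=\bigcap_{n\in\Z}(\s^*)^{-n}(U^*)$ is $\s^*$-stable; since $(\s^*)^{-m}\big((g_i)_i\big)=(g_m,g_{m+1},\ldots)$ has zeroth coordinate $g_m$, applying the previous conclusion to each translate $(\s^*)^{-m}\big((g_i)_i\big)\in S$ yields $g_m=1$ for every $m$. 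Hence $S=1$ and $G^*$ is a $\ast$expansive profinite group.

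For the universal property, let $H$ be a $\ast$expansive profinite group and $\alpha\colon H\to G$ a morphism of topological $\s$-groups. I would define $\beta\colon H\to G^*$ by $\beta(h)=\big(\alpha(\s^{-i}(h))\big)_{i\in\N}$; the inverse powers $\s^{-i}$ make sense precisely because $\s$ is an automorphism of $H$, and this is exactly where the automorphism hypothesis on $H$ is essential. Using $\alpha\s=\s\alpha$ one checks that $\beta(h)$ satisfies the defining relations of $G^*$, that $\beta$ is a continuous homomorphism with $\beta\s=\s^*\beta$, and that $\pi_0\beta=\alpha$. Uniqueness follows from the identity $\pi_i=\pi_0(\s^*)^{-i}$: any $\s$-morphism $\beta'\colon H\to G^*$ with $\pi_0\beta'=\alpha$ must satisfy $\pi_i\big(\beta'(h)\big)=\pi_0\big((\s^*)^{-i}\beta'(h)\big)=\pi_0\big(\beta'(\s^{-i}(h))\big)=\alpha(\s^{-i}(h))$, forcing $\beta'=\beta$.

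The construction itself is routine inverse-limit bookkeeping; the step demanding the most care is the expansiveness of $\s^*$, where one must genuinely exploit the $\s^*$-invariance of the bi-infinite intersection to propagate triviality of the zeroth coordinate to all coordinates, rather than merely bounding each coordinate inside $U$. A secondary point requiring attention is the well-definedness and $\s$-equivariance of $\beta$, both of which rest on combining the commutation $\alpha\s=\s\alpha$ with the invertibility of $\s$ on $H$; this is what makes the $\N$-indexed inverse limit (with $\s^*$ automatically invertible) the correct object and forces the cofree/coreflection form of the universal property.
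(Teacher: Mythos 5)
Your proposal is correct and takes essentially the same route as the paper: the same inverse limit $G^*=\{(g_i)_{i\in\N}\in G^\N \mid \sigma(g_{i+1})=g_i\}$, the same projection $\pi_0$ as structure map, the same choice $U^*=\pi_0^{-1}(U)$ for expansiveness (your ``propagation via $\sigma^*$-stability'' is just a repackaging of the paper's direct computation with $\sigma^{n-i}(g^*)$), and the same formula $\beta(h)=\bigl(\alpha(\sigma^{-i}(h))\bigr)_{i\in\N}$ with the same uniqueness argument. The only cosmetic differences are that you write $\sigma^*$ in ``prepend'' form $(g_i)_i\mapsto(\sigma(g_0),g_0,g_1,\ldots)$, which coincides with the paper's coordinate-wise $\sigma$ because of the relations $\sigma(g_{i+1})=g_i$, and that you establish invertibility by exhibiting the left shift as an explicit continuous inverse, whereas the paper checks bijectivity and invokes openness of surjective morphisms of profinite groups.
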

\begin{proof}
	For $i\in\nn$ let $G_i$ be a copy of $G$ and consider the projective system $(G_i,\ \f_{i+1})_{i\in\nn}$, where the connection maps $\f_{i+1}\colon G_{i+1}\to G_i$ are all equal to $\s\colon G\to G$. Let $G^*$ be the projective limit of this projective system. Explicitly, we have
	$$G^*=\left\{(g_0,g_1,g_2,\ldots)\in G^\nn \big|\ \s(g_{i+1})=g_i \ \forall \ i\in\nn \right\}.$$
	Define $\s\colon G^*\to G^*$ by $\s(g_0,g_1.\ldots)=(\s(g_0),\s(g_1),\ldots)$. Then $\s\colon G^*\to G^*$ is continuous because the maps $G^*\to G,\ (g_0,g_1,\ldots)\mapsto \s(g_i)$ are continuous for every $i\in\nn$.
	If $g^*=(g_0,g_1,\ldots)$ lies in the kernel of $\s\colon G^*\to G^*$, then $\s(g_i)=1$ for all $i\in\nn$. But $\s(g_i)=g_{i-1}$ for $i\geq 1$. So $g^*=1$ and $\s$ is injective. On the other hand, if $g^*=(g_0,g_1,\ldots)\in G^*$, then $\s((g_1,g_2,\ldots))=g^*$, so $\s\colon G^*\to G^*$ is surjective. A bijective morphism of profinite groups is an automorphism because any surjective morphism of profinite groups is open (\cite[Remark~1.2.1~(f)]{FriedJarden:FieldArithmetic}). Thus $\s\colon G^*\to G^*$ is an automorphism. 
	
	The projection $\pi\colon G^*\to G,\ (g_0,g_1,\ldots)\mapsto g_0$ is a morphism of topological $\s$-groups. Let $U$ be an open subgroup of $G$ such that $\cap_{n\in\nn}\s^{-n}(U)=1$. Set $U^*=\pi^{-1}(U)$. We claim that $\cap_{n\in\Z}\s^n(U^*)=1$. Assume that $g^*=(g_0,g_1,\ldots)\in \cap_{n\in\Z}\s^n(U^*)$. Let $n,i\in\nn$. As $g^*\in\s^{i-n}(U^*)$, we see that $$\s^{n-i}(g^*)=\s^n(g_{i},g_{i+1},\ldots)=(\s^n(g_i),\s^n(g_{i+1}),\ldots)$$ lies in $U^*$, i.e., $\s^n(g_i)\in U$. So $g_i\in \s^{-n}(U)$ for all $n\in\nn$. Thus $g_i=1$ for all $i\in\nn$ and $g^*=1$ as desired. Therefore $G^*$ is a $\ast$expansive profinite group.
	
	Let $H$ be a $\ast$expansive profinite group and $\f\colon H\to G$ a morphism of profinite $\s$\=/groups.
	Define $\psi\colon H\to G^*$ by $\psi(h)=(\f(h),\f(\s^{-1}(h)),\f(\s^{-2}(h)),\ldots)$. Then $\psi$ is a morphism of topological $\s$-group such that 
	$$
	\xymatrix{
		H \ar^\f[rr] \ar@{..>}_-\psi[rd] & & G \\
		& G^* \ar_\pi[ru] &
	}
	$$
	commutes. Indeed, $\psi$ is the only such morphism, because any other morphism $\psi'\colon H\to G^*$ with this property satisfies $\pi(\s^{-i}(\psi'(h)))=\pi(\psi'(\s^{-i}(h)))=\f(\s^{-i}(h))$ for all $i\in\nn$.
\end{proof}

\begin{ex} \label{ex: ast for full}
	If $G=\G^\nn$ is the full one-sided group shift on a finite group $\G$, then $G^*=\G^\Z$ is the full two-sided group shift on $\G$ and $G^*\to G,\ (g_n)_{n\in\Z}\mapsto (g_n)_{n\in\nn}$ is the projection.
\end{ex}

The following example generalizes Example \ref{ex: ast for full}.
\begin{ex} \label{ex: astgamma}
	Let $\Gamma$ be a directed group graph on the finite group $\G$. Then $(G_\Gamma)^*=G_\Gamma^*$. (Recall that $G_\Gamma$ is defined after Definition \ref{def: group graph} and $G_\Gamma^*$ is defined before Definition \ref{defi: astexpansive group}.) 
\end{ex}
\begin{proof}
	We have a natural map $\pi\colon G_\Gamma^*\to G_\Gamma$ that associates to the vertices of a biinfinite directed path the vertices of the infinite directed subpath starting at the vertex in position zero. Let $H$ be a $\ast$expansive profinite group and $\f\colon H\to G_\Gamma$ a morphism of topological $\s$-groups. For every $h\in H$, the first vertex of the path corresponding to $\f(\s^{-1}(h))$ extends the path corresponding to $\f(h)$ one step further to the left, similarly for $\s^{-n}$ in place of $\s^{-1}$. So we see that there exists a unique $g=\psi(h)\in G_\Gamma^*$ such that $\f(\s^{-n}(h))=\pi(\s^{-n}(g))$ for all $n\in\N$.
\end{proof}

\begin{cor} \label{cor: comes from below}
	Every $\ast$expansive profinite group $H$ is of the form $H=G^*$ for some expansive profinite group $G$.
\end{cor}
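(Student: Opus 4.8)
The plan is to realize the given $\ast$expansive profinite group $H$ as the inverse limit $G^*$ of a suitable quotient of itself. Fix an open normal subgroup $U$ of $H$ witnessing expansiveness of the automorphism $\s$, so that $\bigcap_{n\in\Z}\s^n(U)=1$, and set
$$N=\bigcap_{n\geq 0}\s^{-n}(U).$$
Being an intersection of open normal subgroups, $N$ is a closed normal subgroup of $H$. The whole point of taking the \emph{one-sided} intersection is that $\s$ now preserves $N$ in only one direction: I would compute directly, using that $\s$ is an automorphism, that $\s(N)=\s(U)\cap\bigcap_{m\geq 0}\s^{-m}(U)=\s(U)\cap N\subseteq N$, so that $\s$ descends to an endomorphism of $G:=H/N$. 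At the same time $\s^i(N)=\bigcap_{m\leq i}\s^m(U)$, so $\bigcap_{i\geq 0}\s^i(N)=\bigcap_{m\in\Z}\s^m(U)=1$; this last identity will power the injectivity below.

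Next I would check that $G=H/N$, with the induced endomorphism, is an expansive profinite group, so that $G^*$ is defined. Since $N\subseteq U$, the image $\overline U=U/N$ is an open subgroup of $G$, and an element $hN$ lies in $\bigcap_{n\geq 0}\s^{-n}(\overline U)$ exactly when $\s^n(h)\in U$ for all $n\geq 0$, i.e. when $h\in N$. Hence $\bigcap_{n\geq 0}\s^{-n}(\overline U)=1$ and $\s\colon G\to G$ is an expansive endomorphism.

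Finally I would compare $H$ with $G^*$ through the morphism supplied by the universal property of the construction $G\rightsquigarrow G^*$ applied to the quotient morphism $\pi\colon H\to G$, namely the morphism of topological $\s$-groups
$$\Psi\colon H\to G^*,\quad \Psi(h)=(\pi(h),\pi(\s^{-1}(h)),\pi(\s^{-2}(h)),\ldots).$$
Injectivity is immediate: $\Psi(h)=1$ forces $\s^{-i}(h)\in N$ for all $i\geq 0$, i.e. $h\in\bigcap_{i\geq 0}\s^i(N)=1$. Surjectivity is the step I expect to be the main obstacle. Given $(g_i)_i\in G^*$ with $\s(g_{i+1})=g_i$, I would lift each $g_i$ to $\tilde g_i\in H$ and consider the cosets $C_i=\s^i(\tilde g_i)\,\s^i(N)$. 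From $\s(\tilde g_{i+1})\in\tilde g_i N$ one gets $\s^{i+1}(\tilde g_{i+1})\in C_i$, and combined with $\s^{i+1}(N)\subseteq\s^i(N)$ this shows $C_{i+1}\subseteq C_i$; thus the $C_i$ form a descending chain of non-empty closed subsets of the compact group $H$, so $\bigcap_i C_i\neq\emptyset$. Any $h$ in this intersection satisfies $\s^{-i}(h)\in\tilde g_i N$, hence $\pi(\s^{-i}(h))=g_i$ for all $i$ and $\Psi(h)=(g_i)_i$. Since a continuous bijective morphism of profinite groups is an isomorphism (cf. \cite[Remark~1.2.1]{FriedJarden:FieldArithmetic}) and $\Psi$ commutes with $\s$, this exhibits $H\cong G^*$ as $\ast$expansive profinite groups, completing the argument.
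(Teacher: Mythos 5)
Your proof is correct, and it takes a genuinely different route from the paper. The paper's proof is a two-line reduction to external structure theory: it invokes Kitchens' theorem (\cite[Theorem 1, (i)]{Kitchens:ExpansiveDynamicsOnZeroDimensionalGroups}, that every $\ast$expansive profinite group is isomorphic to $G^*_\Gamma$ for some directed group graph $\Gamma$) and then applies Example~\ref{ex: astgamma}, which identifies $(G_\Gamma)^*$ with $G^*_\Gamma$, so that one may take $G=G_\Gamma$. You instead construct $G$ intrinsically as a quotient of $H$ itself: taking $N=\bigcap_{n\geq 0}\s^{-n}(U)$, checking $\s(N)\subseteq N$ and $\bigcap_{i\geq 0}\s^i(N)=1$, verifying that $\overline{U}=U/N$ witnesses expansiveness of the induced endomorphism on $G=H/N$, and then proving directly that the universal-property map $\Psi\colon H\to G^*$ is bijective --- injectivity from $\bigcap_{i\geq 0}\s^i(N)=1$, surjectivity by a finite-intersection-property argument with the descending closed cosets $C_i=\s^i(\tilde g_i)\s^i(N)$ in the compact group $H$. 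All of these steps check out (in particular the inclusion $C_{i+1}\subseteq C_i$ uses exactly $\s^{i+1}(N)\subseteq\s^i(N)$ and the fact that $\s^{i+1}(\tilde g_{i+1})\in C_i$, and the final appeal to the fact that a continuous bijective morphism of profinite groups is a homeomorphism is legitimate). What each approach buys: the paper's argument is essentially free given the machinery it has already cited, but it makes Corollary~\ref{cor: comes from below} logically dependent on Kitchens' classification of two-sided group shifts; your argument is self-contained, uses only the universal construction $G\rightsquigarrow G^*$ plus compactness, and yields the slightly more explicit statement that $G$ can always be chosen to be a quotient $H/N$ of the given group $H$, with $N$ determined (up to the choice of the expansive neighborhood $U$) by the one-sided intersection $\bigcap_{n\geq 0}\s^{-n}(U)$.
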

\begin{proof}
	By \cite[Theorem 1, (i)]{Kitchens:ExpansiveDynamicsOnZeroDimensionalGroups} every $\ast$expansive profinite group $H$ is isomorphic to $G_\Gamma^*$ for some directed group graph $\Gamma$. By Example \ref{ex: astgamma} we can thus take $G=G_\Gamma$.
\end{proof}

\begin{ex} \label{ex: ast for sinfinitesimal}
	Let $G$ be a $\s$-infinitesimal expansive profinite group. Then $G^*=1$. Indeed, any morphism $\f\colon H\to G$ with $H$ a $\ast$expansive profinite group satisfies $\f(H)=1$.
\end{ex}
\begin{proof}
	By Lemma \ref{lemma: sinfinitesimal} there exists an $n\in\N$ such that $\s^n(g)=1$ for all $g\in G$. If $h\in H$ then we can write $h=\s^n(h')$ for some $h'\in H$ and then $\f(h)=\s^n(\f(h'))=1$.
\end{proof}

\begin{ex} \label{ex:finite}
	If $G$ is a finite (discrete) group with an endomorphism $\s\colon G\to G$, then $G^*=\cap_{n\in\N} \s^n(G)$. In particular, if $\s\colon G\to G$ is an automorphism, then $G^*=G$.
\end{ex}

%

\begin{lemma} \label{lemma: ast and subgroups}
	If $H$ is an expansive subgroup of an expansive profinite group $G$, then $H^*$ is a $\ast$expansive subgroup of $G^*$. Moreover, if $H$ is normal in $G$, then $H^*$ is normal in $G^*$ and $G^*/H^*\simeq (G/H)^*$.
\end{lemma}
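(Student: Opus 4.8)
The plan is to exploit the functoriality of the construction $G\rightsquigarrow G^*$ and then reduce every assertion to the isomorphism theorems for $\ast$expansive profinite groups (Proposition \ref{prop: astisomorphosm theorems}). First I would record that any morphism $\f\colon G\to G'$ of expansive profinite groups induces a morphism $\f^*\colon G^*\to (G')^*$ of $\ast$expansive profinite groups. This follows by applying the universal property to the composite $G^*\xrightarrow{\pi}G\xrightarrow{\f}G'$; unwinding the explicit formula from the construction of $G^*$ shows that $\f^*$ is simply $(g_0,g_1,\ldots)\mapsto(\f(g_0),\f(g_1),\ldots)$, using that the unique $\s$-preimage of $(g_0,g_1,\ldots)$ in $G^*$ is the shift $(g_1,g_2,\ldots)$.

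For the first assertion I would apply this to the inclusion $\iota\colon H\hookrightarrow G$, which is a morphism of expansive profinite groups by Lemma \ref{lemma: expansive subgroups and quotients}(i). The induced map $\iota^*\colon H^*\to G^*$, $(h_i)_i\mapsto(h_i)_i$, is visibly injective, so by Proposition \ref{prop: astisomorphosm theorems}(i) its image is a $\ast$expansive subgroup of $G^*$ and $\iota^*$ identifies $H^*$ with this subgroup; concretely, $H^*=\{(g_i)_i\in G^*\mid g_i\in H\ \forall\, i\}$. Normality is then immediate: if $H$ is normal in $G$, then for $(g_i)_i\in G^*$ and $(h_i)_i\in H^*$ the conjugate $(g_i)_i(h_i)_i(g_i)_i^{-1}=(g_ih_ig_i^{-1})_i$ lies in $G^*$ and has all coordinates in $H$, hence lies in $H^*$.

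For the quotient statement I would apply functoriality to the canonical map $q\colon G\to G/H$ (recall that $G/H$ is an expansive profinite group by Lemma \ref{lemma: expansive subgroups and quotients}(ii)), obtaining $q^*\colon G^*\to (G/H)^*$, $(g_i)_i\mapsto(\overline{g_i})_i$. Its kernel consists precisely of the sequences with every coordinate in $H$, so $\ker(q^*)=H^*$. It then remains to show that $q^*$ is surjective, after which Proposition \ref{prop: astisomorphosm theorems}(i) yields $G^*/H^*\simeq (G/H)^*$. This surjectivity is the one genuinely non-formal point. Given $(\overline{g_i})_i\in (G/H)^*$, I would set $P_i=q^{-1}(\overline{g_i})\subseteq G$; each $P_i$ is a nonempty coset of $H$, hence compact, and the relation $\overline{\s}(\overline{g_{i+1}})=\overline{g_i}$ forces $\s(P_{i+1})\subseteq P_i$. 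A lift of $(\overline{g_i})_i$ is exactly an element of the inverse limit $\varprojlim(P_i,\s)$, and since this is an inverse limit over $\N$ of nonempty compact Hausdorff spaces with continuous connecting maps, it is nonempty; equivalently, a level-wise surjective morphism of inverse systems of profinite groups induces a surjection on inverse limits. The hard part is thus this passage to the limit in the surjectivity step; everything else is a formal consequence of the universal property and the isomorphism theorems.
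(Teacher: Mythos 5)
Your proof is correct and follows essentially the same route as the paper: both identify $H^*$ coordinate-wise inside $G^*=\{(g_0,g_1,\ldots)\in G^\N\mid \s(g_{i+1})=g_i\}$, check normality by conjugating coordinate-wise, and realize $(G/H)^*$ as the image of the coordinate-wise map on $G^*$ with kernel $H^*$. The only difference is that the paper asserts the isomorphism $G^*/H^*\simeq (G/H)^*$ in one line, whereas you supply the compactness/inverse-limit argument for surjectivity of $G^*\to (G/H)^*$ --- which is indeed the one non-formal point and is a welcome addition of detail.
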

\begin{proof}
	Clearly $H^*=\{(h_0,h_1,\ldots)\in H^\N|\ \s(h_{i+1})=h_i \ \forall \ i\in\N\}$ is a $\ast$expansive subgroup of $G^*=\{(g_0,g_1,\ldots)\in G^\N|\ \s(g_{i+1})=g_i \ \forall \ i\in\N\}$. If $H$ is normal in $G$, then $H^*$ is normal in $G^*$. The map $G^*/H^*\to (G/H)^*,\ \overline{(g_0,g_1,\ldots)}\mapsto (\overline{g_0},\overline{g_1},\ldots)$ is an isomorphism. 
\end{proof}

We will also need a $\ast$version of Lemma \ref{lemma: normal subgroup of shift on simple group}.

\begin{lemma} \label{lemma: normal subgroup of shift on simple group for inversive}
	Let $\G$ be a finite simple group and $G=\G^\Z$ the full two-sided group shift on $\G$. Let $N$ be a proper $\ast$expansive subgroup of $G$.
	\begin{enumerate}
		\item If $\G$ is non-commutative, then $N$ is trivial.
		\item If $\G$ is commutative, then $N$ is finite and $G/N$ is isomorphic to $G$.
	\end{enumerate}
	So, in either case, $N$ is finite and $G/N$ is isomorphic to $G$.
\end{lemma}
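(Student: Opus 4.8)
The plan is to deduce this two-sided statement from its one-sided counterpart, Lemma~\ref{lemma: normal subgroup of shift on simple group}, via the universal construction $G\rightsquigarrow G^*$. Recall from Example~\ref{ex: ast for full} that $G=\G^\Z=(\G^\N)^*$, and let $\pi\colon \G^\Z\to\G^\N$ be the structural projection $(g_n)_{n\in\Z}\mapsto (g_n)_{n\in\N}$, a surjective morphism of topological $\s$-groups. Set $N_0=\pi(N)\leq\G^\N$. Since $N$ is compact and $\s(N)=N$, the image $N_0$ is a closed subgroup with $\s(N_0)=\pi(\s(N))=N_0$, so $N_0$ is an expansive subgroup of $\G^\N$ on which $\s$ is surjective; and since $\pi$ is a surjective homomorphism and $N$ is normal in $G$, the subgroup $N_0$ is normal in $\G^\N$.

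The crux is to identify $N$ with $(N_0)^*$ inside $\G^\Z$. Under the isomorphism $\G^\Z\simeq(\G^\N)^*$ of Example~\ref{ex: ast for full} the point $z\in\G^\Z$ corresponds to the compatible tuple $(\pi(\s^{-i}(z)))_{i\in\N}$, so $(N_0)^*$ is exactly the set of $z$ with $\pi(\s^{-i}(z))\in N_0$ for all $i\geq 0$. The inclusion $N\subseteq (N_0)^*$ is immediate, as $z\in N$ gives $\s^{-i}(z)\in N$ and hence $\pi(\s^{-i}(z))\in N_0$. For the reverse inclusion I would argue by approximation: given such a $z$, choose for each $i$ an $n^{(i)}\in N$ with $\pi(n^{(i)})=\pi(\s^{-i}(z))$; then $m^{(i)}:=\s^i(n^{(i)})\in N$ agrees with $z$ in all coordinates $k\geq -i$, so $m^{(i)}\to z$ in $\G^\Z$, and since $N$ is closed we get $z\in N$. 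Thus $N=(N_0)^*$; in particular $N_0$ is proper, for otherwise $N=(\G^\N)^*=\G^\Z$.

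With $N=(N_0)^*$ established, both cases follow. If $\G$ is commutative it is cyclic of prime order, and $N_0$ is a proper normal expansive subgroup of $\G^\N$, so Lemma~\ref{lemma: normal subgroup of shift on simple group} gives that $N_0$ is finite with $(\G^\N)/N_0\simeq\G^\N$. Then $N=(N_0)^*$ is finite (as $\s$ is a bijection of the finite group $N_0$, the projective limit defining $(N_0)^*$ is isomorphic to $N_0$), and using Lemma~\ref{lemma: ast and subgroups} together with Example~\ref{ex: ast for full},
$$
G/N=(\G^\N)^*/(N_0)^*\simeq\big((\G^\N)/N_0\big)^*\simeq(\G^\N)^*=\G^\Z=G,
$$
which is (ii). If $\G$ is non-commutative, the proof of Lemma~\ref{lemma: normal subgroup of shift on simple group} shows that every proper normal expansive subgroup of $\G^\N$ has the form $\G^i\times 1\times 1\times\cdots$; but $\s$ maps this onto $\G^{i-1}\times 1\times\cdots$, so $\s(N_0)=N_0$ forces $i=0$, i.e.\ $N_0=1$. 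Hence $N=(N_0)^*=1$, which is (i). In either case $N$ is finite and $G/N\simeq G$.

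The step I expect to be the main obstacle is the identification $N=(N_0)^*$: the easy inclusion merely records that points of $N$ admit compatible left-extensions, whereas the content lies in the reverse inclusion, which relies on $N$ being closed (so that the left-truncated approximants $m^{(i)}$ converge back into $N$) and on $\s$ being surjective on $N_0$, which is what makes $\pi(N)$ the ``correct'' one-sided shadow. A secondary point requiring care is normality: the reduction is arranged precisely so that normality of $N$ transports to $N_0$ through the surjection $\pi$, and this is genuinely needed in the non-commutative conclusion (the constant ``diagonal'' copy of $\G$ is a nontrivial proper $\ast$expansive subgroup, but it is not normal).
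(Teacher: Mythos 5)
Your proof is correct, but it takes a genuinely different route from the paper's. The paper proves the lemma directly at the level of blocks in the two\-/sided shift: it sets $N[i]$ to be the image of $N$ under projection onto coordinates $0,\ldots,i$, notes that $N[i]$ is normal in $\G^{i+1}$, and then redoes (in two\-/sided form) the analysis of Lemma~\ref{lemma: normal subgroup of shift on simple group} --- in the non\-/commutative case a trivial factor of $N[i]$ propagates by shift invariance to force $N=1$, and in the commutative case the kernel filtration stabilizes, giving $|N|=|\G|^n$ and an isomorphism $G/N\simeq(\G^{n+1}/N[n])^\Z\simeq\G^\Z$. You instead deduce the two\-/sided statement from the one\-/sided lemma via the construction $G\rightsquigarrow G^*$, and the key new ingredient is the identification $N=(\pi(N))^*$: the approximation argument (the left\-/truncated approximants $\s^i(n^{(i)})$ lie in $N$ and converge to $z$, and $N$ is closed) is correct, as is the bookkeeping --- $\pi(N)$ is closed, $\s$\-/stable with $\s$ surjective on it, normal, and proper, so Lemma~\ref{lemma: normal subgroup of shift on simple group} applies, and Lemma~\ref{lemma: ast and subgroups}, Example~\ref{ex: ast for full} and functoriality of $*$ transport the conclusion back to $G$. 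What each approach buys: the paper's argument is self\-/contained and needs none of the $*$\-/machinery inside the proof, whereas yours is closer to the philosophy the paper announces for this section (prove statements one\-/sided, then push them through $*$) and isolates a reusable structural fact, namely that every $\ast$expansive subgroup of $(\G^\N)^*$ equals the $*$ of its one\-/sided projection. One mild caveat: in case (i) you invoke the \emph{proof} of Lemma~\ref{lemma: normal subgroup of shift on simple group} (the form $\G^i\times 1\times 1\times\cdots$) rather than its statement; this is legitimate, and the paper itself records that fact in an example following that lemma.

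Your remark on normality is a genuine catch. As printed, the lemma omits the word ``normal,'' yet both the paper's proof and yours use normality of $N$ (the paper to make $N[i]$ normal in $\G^{i+1}$, you to make $\pi(N)$ normal in $\G^\N$), and your diagonal example $\{(\ldots,g,g,g,\ldots)\mid g\in\G\}$ shows that without normality the non\-/commutative case is false. Since the lemma is only applied to consecutive terms of subnormal series, normality is available wherever it is used, so this is a gap in the statement rather than in either proof.
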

\begin{proof}
	For $i\in\nn$ let $N[i]$ denote the image of $N$ under the projection $\G^\Z\to \G^{i+1},\ (g_n)_{n\in\N}\mapsto (g_0,\ldots,g_i)$. In other words, $N[i]$ is the subgroup of $\G^{i+1}$ consisting of all blocks of length $i+1$ that occur in elements of $N$. As $N$ is normal in $G$, we see that $N[i]$ is normal in $\G^{i+1}$. Moreover, since $N$ is a proper subgroup of $G$, there must exist an $i\in\N$ such that $N[i]$ is a proper subgroup of $\G^{i+1}$.
	
	We first treat the case that $\G$ is non-commutative. Then, as explained in the proof of Lemma~\ref{lemma: normal subgroup of shift on simple group}, the group $N[i]$ is an $i+1$-fold product, where each factor is either $1$ or $\G$. In particular, one of the factors, say, the $j$-th factor, has to be $1$. This means that every block of length $i+1$ that occurs in an element of $N$ has a $1$ in its $j$-th position. But every entry of an element of $N$ is the $j$-th entry of some block of length $i+1$, so $N=1$.
	
	We next treat the commutative case. So $\G$ is cyclic of prime order. For every $i\geq 1$ the kernel of the projection $N[i]\to N[i-1],\ (g_0,\ldots,g_i)\mapsto (g_0,\ldots,g_{i-1})$ is of the form $\{1\}^i\times \mathcal{N}_i$ for some subgroup $\mathcal{N}_i$ of $\G$, i.e., $\mathcal{N}_i=1$ ore $\mathcal{N}_i=\G$. Set $\mathcal{N}_0=N[0]$. There exists an $n\in\N$ such that $\mathcal{N}_0,\ldots,\mathcal{N}_{n-1}$ are all equal to $\G$ and $\mathcal{N}_{n},\mathcal{N}_{n+1},\ldots$ are all trivial. So $N[i]=|\G|^n$ for all $i\geq n-1$. It follows that $|N|=|\G|^n$, in particular, $N$ is finite. The group $\G^{n+1}/N[n]$ is isomorphic to $\G$ because it has the same order. The map $G\to (\G^{n+1}/N[n])^\Z,\ (g_m)_{m\in\Z}\mapsto (\overline{(g_m,g_{m+1},\ldots,g_{m+n})})_{m\in\Z}$ has kernel $N$ and thus induces an isomorphism $G/N\simeq \G^\Z$.
\end{proof}

We are now prepared to establish the $\ast$version of Theorem \ref{theo: main}. 

\begin{theo} \label{theo: expansive auto}
	Let $G$ be a $\ast$expansive profinite group. Then there exists a subnormal series
	$$G\supseteq G_1\supseteq G_2\supseteq\ldots\supseteq G_n=1$$
	of $\ast$expansive subgroups $G_i$ of $G$ such that  $G/G_1$ is a finite group and
$G_i/G_{i+1}$ is isomorphic to a full two-sided group shift on a finite simple group $\G_i$ for $i=1,\ldots,n-1$. 
	Moreover, the length $n$ of such a series and the isomorphism classes of the finite simple groups $\G_i$ are uniquely determined by $G$. 
\end{theo}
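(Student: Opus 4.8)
The plan is to deduce existence from Theorem~\ref{theo: main} via the functor $G\rightsquigarrow G^*$, and to prove uniqueness directly by a Schreier refinement argument modeled on the uniqueness part of Theorem~\ref{theo: main}.

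For existence, first I would invoke Corollary~\ref{cor: comes from below} to write $G=E^*$ for some expansive profinite group $E$. Applying Theorem~\ref{theo: main} to $E$ yields a subnormal series $E\supseteq E_1\supseteq\cdots\supseteq E_n$ with $E_1=E^\sc$, with $E_i/E_{i+1}\simeq\G_i^\N$ for a finite simple group $\G_i$ ($i=1,\dots,n-1$), and with $E_n$ $\s$-infinitesimal. Applying the $*$-construction and Lemma~\ref{lemma: ast and subgroups}, I obtain a chain of normal $\ast$expansive subgroups $G_i:=E_i^*$ of $G=E^*$ with $G_i/G_{i+1}\simeq(E_i/E_{i+1})^*$. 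By Example~\ref{ex: ast for full} the factors $(E_i/E_{i+1})^*=(\G_i^\N)^*$ equal $\G_i^\Z$ for $i=1,\dots,n-1$, so these are full two-sided group shifts on finite simple groups. The top factor is $G/G_1\simeq(E/E^\sc)^*$, which is finite because $E/E^\sc$ is finite with $\s$ an automorphism (Lemma~\ref{lemma: sconnected component} and Example~\ref{ex:finite}). Finally $G_n=E_n^*=1$ by Example~\ref{ex: ast for sinfinitesimal}, since $E_n$ is $\s$-infinitesimal. This produces a series of the required form.

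For uniqueness, suppose $G\supseteq G_1\supseteq\cdots\supseteq G_n=1$ and $G\supseteq H_1\supseteq\cdots\supseteq H_m=1$ are two series of the stated type. By Proposition~\ref{prop: astschreier refinement} they admit equivalent refinements, which I may assume to have all inclusions proper. The idea, exactly as in the proof of Theorem~\ref{theo: main}, is to count the \emph{infinite} factors. The crucial input is Lemma~\ref{lemma: normal subgroup of shift on simple group for inversive}: every proper $\ast$expansive subgroup of a full two-sided shift $\G_i^\Z$ on a finite simple group is finite. Consequently, when the factor $G_i/G_{i+1}\simeq\G_i^\Z$ is refined, the first new factor $G_i/G_{i,1}$ is infinite (and, again by that lemma, isomorphic to $\G_i^\Z$), while all lower factors, being subquotients of the finite group $G_{i,1}/G_{i+1}$, are finite; the refinement of the finite quotient $G/G_1$ contributes only finite factors. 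Hence the refined first series has exactly $n-1$ infinite factors, namely $\G_1^\Z,\dots,\G_{n-1}^\Z$, and likewise the second has $m-1$ infinite factors $\H_1^\Z,\dots,\H_{m-1}^\Z$. Equivalence of the refinements then forces $n=m$ and matches the infinite factors $\G_i^\Z$ with the $\H_j^\Z$ up to a permutation.

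The one remaining point, which I expect to be the main obstacle, is to recover the finite simple group $\G$ from the isomorphism class of $\G^\Z$ as a $\ast$expansive profinite group; the one-sided trick of Lemma~\ref{lemma: recover group} (taking $\ker(\s)$) fails here because $\s$ is an automorphism. I would instead recover $\G$ as the subgroup of $\s$-fixed points of $\G^\Z$, i.e., the constant sequences, which form a subgroup isomorphic to $\G$ and is preserved by any isomorphism of $\ast$expansive groups since such an isomorphism commutes with $\s$. This yields $\G_i\simeq\H_{\pi(i)}$ and completes the uniqueness.
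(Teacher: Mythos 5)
Your proposal is correct and follows the paper's proof almost verbatim: existence via Corollary~\ref{cor: comes from below}, Theorem~\ref{theo: main}, Lemma~\ref{lemma: ast and subgroups} and Examples~\ref{ex: ast for full}, \ref{ex: ast for sinfinitesimal}, \ref{ex:finite}; uniqueness via Proposition~\ref{prop: astschreier refinement} and counting the infinite factors of the refinements using Lemma~\ref{lemma: normal subgroup of shift on simple group for inversive}. The one place you diverge is the final step, recovering $\G$ from the isomorphism class of $\G^\Z$: the paper simply cites Kitchens (\cite[Proposition~7]{Kitchens:ExpansiveDynamicsOnZeroDimensionalGroups}), whereas you give a direct argument via the $\s$-fixed points. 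Your argument is valid: the fixed points of the shift on $\G^\Z$ are exactly the constant sequences, a subgroup isomorphic to $\G$, and any isomorphism of $\ast$expansive profinite groups commutes with $\s$ and hence restricts to an isomorphism of the fixed-point subgroups; this is a nice self-contained replacement for the external citation.
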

\begin{proof}
	We know from Corollary \ref{cor: comes from below} that $G$ is of the form $G=H^*$ for some expansive profinite group $H$. Let 
	$$H\supseteq H_1\supseteq \ldots\supseteq H_{n}$$ be a subnormal series for $H$ as in Theorem \ref{theo: main}. For $i=1,\ldots,n $ set $G_i=H_i^*$. By Lemma \ref{lemma: ast and subgroups} we have a subnormal series
	\begin{equation} \label{eq: ast series}
		G\supseteq G_1\supseteq\ldots\supseteq G_n
	\end{equation}
	of $\ast$expansive subgroup for $G=H^*$. As $H/H_1$ is finite and $\s\colon H/H_1\to H/H_1$ is bijective (Lemma \ref{lemma: sconnected component} (ii)) we see, using Lemma \ref{lemma: ast and subgroups} and Example \ref{ex:finite}, that $G/G_1=H^*/H_1^*=(H/H_1)^*=H/H_1$ is finite. It follows from Example \ref{ex: ast for full} that $G_i/G_{i+1}=H_i^*/H_{i+1}^*=(H_i/H_{i+1})^*$ is isomorphic to a full two-sided group shift on a finite simple group $\G_i$ for $i=1,\ldots,n-1$. Moreover, by Example \ref{ex: ast for sinfinitesimal} we have $G_n=H_n^*=1$. So the subnormal series (\ref{eq: ast series}) has all the required properties. 
	
	The proof of the uniqueness claim is similar to the proof in Theorem \ref{theo: main}. Assume we have another subnormal series
	$$G\supseteq H_1\supseteq\ldots\supseteq H_m=1$$
	as in the theorem and let $\H_1.\ldots,\H_{m-1}$ denote the corresponding finite simple groups. According to Proposition \ref{prop: astschreier refinement}, we can find equivalent refinements. Using Lemma \ref{lemma: normal subgroup of shift on simple group for inversive} we see that the number of infinite factor groups in a refinement equals $n-1$ respectively $m-1$. Because the refinements are equivalent, we must have $m=n$. Using Lemma \ref{lemma: normal subgroup of shift on simple group for inversive} again, we see that there exists a permutation $\pi$ such that $\G_i^\Z$ is isomorphic to $\H_{\pi(i)}^\Z$. But then $\G_i\simeq \H_i$ (\cite[Proposition~7]{Kitchens:ExpansiveDynamicsOnZeroDimensionalGroups}).
\end{proof}

%
%
%

\section{Babbitt's decomposition}

Babbitt's decomposition theorem is an important classical theorem in difference algebra that elucidates the structure of algebraic difference field extensions. See \cite[Chapter~7, Theorem~7]{Cohn:difference}, \cite[Theorem 5.4.13]{Levin} or \cite[Theorem 2.3]{Babbitt:FinitelyGeneratedPathologicalExtensionsOfDifferenceFields} for the original reference.

In this section we explain how our main result (Theorem \ref{theo: main}) implies Babbitt's decomposition theorem and indeed yields additional information concerning the uniqueness of the decomposition.

To state Babbitt's decomposition theorem we need to recall some basic notation from difference algebra. See \cite{Cohn:difference} or \cite{Levin}.
A \emph{difference field}, or \emph{$\s$-field} for short, is a field $K$ equipped with an endomorphism $\s\colon K\to K$. An extension $L/K$ of difference fields is an extension of fields such that $\s\colon L\to L$ extends $\s\colon K\to K$. 
An \emph{intermediate $\s$-field} of a $\s$-field extension $L/K$ is a subfield $M$ of $L$ containing $K$ such that $\s(M)\subseteq M$. If $L/K$ is an extension of $\s$-fields and $A$ a subset of $L$, then $K\langle A\rangle\subseteq L$ denotes the smallest intermediate $\s$-field of $L/K$ that contains $A$. Note that $K\langle A\rangle=K(A,\s(A),\s^2(A)\ldots)$, the field extension of $K$ generated by $A,\s(A),\ldots$. If $L=K\langle A\rangle$ for a finite set $A$, then $L/K$ is called \emph{finitely $\s$-generated}.

A $\s$-field extension $L/K$ is Galois if the underlying field extension is Galois, i.e., normal and separable. (So the field extension is algebraic but not necessarily finite.) The following lemma explains the connection between extensions of difference fields and expansive profinite groups. See \cite[Section 8.1]{Levin} for related results in a slightly different context. (In \cite[Section~8.1]{Levin} it is always assumed that $\s\colon K\to K$ is an automorphism.)

\begin{lemma} \label{lemma: expansive structure on Galois group}
	Let $L/K$ be a Galois extension of $\s$-fields and let $G=G(L/K)$ be the Galois group (of the underlying field extension).
	\begin{enumerate}
		\item For every $g\in G$ there exists a unique $g^\sigma\in G$ such that $\s g^\sigma=g\sigma$ as maps from $L$ to $L$.
		\item The map $\s\colon G\to G,\ g\mapsto g^\s$ is a continuous group homomorphism.
		\item The extension $L/K$ is finitely $\s$-generated if and only if $G$ is an expansive profinite group.
	\end{enumerate}
\end{lemma}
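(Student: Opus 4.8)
The plan is to treat the three parts in order, with the genuine content concentrated in part (i); once $g^\s$ has been constructed, parts (ii) and (iii) follow formally from the Galois correspondence. For (i), the key point is that the subfield $\s(L)\subseteq L$ is stable under $G=\gal(L/K)$, i.e.\ $g(\s(L))=\s(L)$ for every $g\in G$. Granting this, I would define $g^\s=\s^{-1}\circ g\circ\s$, where $\s^{-1}$ is the inverse of the isomorphism $\s\colon L\xrightarrow{\ \sim\ }\s(L)$; stability of $\s(L)$ guarantees $g(\s(x))\in\s(L)$ for all $x\in L$, so $g^\s$ is a well-defined field endomorphism of $L$. It is an automorphism since $g^\s(L)=\s^{-1}(g(\s(L)))=\s^{-1}(\s(L))=L$, and it fixes $K$ because for $a\in K$ we have $\s(a)\in K$, hence $g(\s(a))=\s(a)$ and $g^\s(a)=a$. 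By construction $\s g^\s=g\s$, and uniqueness is immediate from injectivity of $\s$. To prove the stability of $\s(L)$ I would use normality: writing $L$ as the field generated over $K$ by the roots of a family $\{f_j\}\subseteq K[X]$ that each split in $L$, the subfield $\s(L)$ is generated over $\s(K)$ by the roots of the polynomials $\s(f_j)$ (apply $\s$ to the coefficients), all of which lie in $\s(L)\subseteq L$. As $\s(f_j)$ has coefficients in $\s(K)\subseteq K$, every $g\in G$ fixes these coefficients and hence permutes the roots of $\s(f_j)$; since $g$ also fixes $\s(K)$, it maps $\s(L)$ into itself, and the same applied to $g^{-1}$ yields $g(\s(L))=\s(L)$.

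For (ii), the map $g\mapsto g^\s=\s^{-1}g\s$ is a group homomorphism, as one checks directly that $(gh)^\s=g^\s h^\s$ using $h(\s(x))\in\s(L)$. For continuity I would compute the preimage of a basic open subgroup: for a finite subextension $E/K$ of $L$, the automorphism $g^\s$ fixes $E$ iff $g$ fixes $\s(E)$, so
\[
\s^{-1}\big(\gal(L/E)\big)=\gal\big(L/K\s(E)\big).
\]
Since $E/K$ is finite, $K\s(E)/K$ is finite, so the right-hand side is open; as the subgroups $\gal(L/E)$ with $E/K$ finite Galois form a neighborhood basis of $1$, the homomorphism $\s$ is continuous.

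For (iii), I would iterate the displayed identity. Using $\s(K)\subseteq K$, an induction gives $\s^{-n}(\gal(L/E))=\gal(L/K\s^n(E))$ for all $n$. Taking $E=K(A)$ for a finite set $A$, so that $K\s^n(E)=K(\s^n(A))$, and noting that the intersection of the groups $\gal(L/K(\s^n(A)))$ is the Galois group of $L$ over the compositum of the fields $K(\s^n(A))$, which is exactly $K\langle A\rangle=K(A,\s(A),\s^2(A),\ldots)$, I obtain
\[
\bigcap_{n\in\N}\s^{-n}\big(\gal(L/K(A))\big)=\gal\big(L/K\langle A\rangle\big).
\]
Now if $L/K$ is finitely $\s$-generated, say $L=K\langle A\rangle$ with $A$ finite, then $K(A)/K$ is finite (the extension is algebraic), the subgroup $U=\gal(L/K(A))$ is open, and the intersection above equals $\gal(L/L)=1$, so $\s$ is expansive. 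Conversely, if $G$ is expansive there is an open $U=\gal(L/E)$ with $\bigcap_n\s^{-n}(U)=1$; choosing a finite $A$ with $E=K(A)$, the computation gives $\gal(L/K\langle A\rangle)=1$, whence $K\langle A\rangle=L$ by the Galois correspondence, i.e.\ $L/K$ is finitely $\s$-generated.

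The main obstacle is the $G$-stability of $\s(L)$ in part (i): this is the one place where normality of $L/K$ is genuinely used, and it is the only step that is not a routine manipulation with the Galois correspondence. Everything in (ii) and (iii) reduces to the identity $\s^{-1}(\gal(L/E))=\gal(L/K\s(E))$ and its iterates, together with $\s(K)\subseteq K$.
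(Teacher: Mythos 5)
Your proof is correct, and it diverges from the paper only in part (i). For parts (ii) and (iii) you follow essentially the paper's route: the paper proves continuity via the identity $\s^{-1}(U_A)=U_{\s(A)}$ for the fixers $U_A$ of finite sets $A\subseteq L$, which is the same as your identity $\s^{-1}(\gal(L/E))=\gal(L/K\s(E))$ with $E=K(A)$, and its iteration identifies $\bigcap_{n\in\N}\s^{-n}(U_A)$ with the Galois group of $L$ over $K\langle A\rangle$, from which both directions of (iii) follow exactly as you argue (the paper's converse likewise takes $L^U=K(A)$ and concludes $L=K\langle A\rangle$ from triviality of that intersection). The genuine difference is part (i): the paper does not prove it at all, but cites \cite[Lemma 1.23]{TomasivWibmer:StronglyEtaleDifferenceAlgebras}, which asserts more generally that any two extensions $\s_1,\s_2\colon L\to L$ of $\s\colon K\to K$ satisfy $\s_2=\s_1\tau$ for a unique $\tau\in G$; specializing to $\s_1=\s$ and $\s_2=g\s$ yields $g^\s$. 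You instead give a self-contained construction: normality of $L/K$ forces the subfield $\s(L)$ to be $G$-stable (your splitting-field argument, using that the coefficients of the polynomials $\s(f_j)$ lie in $\s(K)\subseteq K$, is correct, as is the trick of applying the inclusion to $g^{-1}$ to get equality), after which $g^\s=\s^{-1}g\s$ is a well-defined element of $G$ with $\s g^\s=g\s$, unique by injectivity of $\s$. What the paper's citation buys is brevity and a more general statement; what your argument buys is self-containedness, at the cost of reproducing what is essentially the content of the cited lemma. You also correctly identify stability of $\s(L)$ as the one point where normality is indispensable.
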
 
\begin{proof}
	In \cite[Lemma 1.23]{TomasivWibmer:StronglyEtaleDifferenceAlgebras} it is shown that for any two extensions $\s_1,\s_2\colon L\to L$ of $\s\colon K\to K$, there exists a unique element $\tau\in G$ such that $\s_2=\s_1\tau$. Applying this with $\sigma_1=\sigma\colon L\to L$ and $\s_2=g\s\colon L\to L$ yields (i).
	
	A basis for the topology of $G$ is given by the open subgroups $$U_A=\{g\in G|\ g(a)=a \ \forall\ a\in A\},$$ where $A$ is a finite subset of $L$. For $a\in L$ and $g\in G$ we have $\s(g)(a)=a$ if and only if $\s(\s(g)(a))=\s(a)$ because $\s\colon L\to L$ is injective. But $\s(\s(g)(a))=g(\s(a))$ by definition of $\s\colon G\to G$. It follows that $\s^{-1}(U_A)=U_{\s(A)}$. Therefore $\s\colon G\to G$ is continuous. A straight forward calculations shows that $\s\colon G\to G$ is a group homomorphism.
	
	To establish (iii), assume first that $A$ is a finite subset of $L$ such that $L=K\langle A\rangle$. We claim that $\cap_{n\in\nn}\s^{-n}(U_A)=1$. Indeed, if $g\in\cap_{n\in\nn}\s^{-n}(U_A)$, then $g\in \s^{-n}(U_A)=U_{\s^n(A)}$ for all $n\in\nn$. So $g$ fixes all elements in $A,\s(A),\s^2(A),\ldots$. Since these generate $L$ as a field extension of $K$, we see that $g$ fixes all elements of $L$. Thus $g=1$ and $G$ is an expansive profinite group.
	
	Conversely, assume that $U$ is an open subgroup of $G$ such that $\cap_{n\in\nn}\s^{-n}(U)=1$. Then $L^U$ is a finite field extension of $K$ and therefore of the form $L^U=K(A)$ for a finite subset $A$ of $L$. In other words, $U=U_A$. As $L/K(A,\s(A),\ldots)$ has Galois group $\cap_{n\in\nn}\s^{-n}(U)=1$, we must have $L=K\langle A\rangle$.
\end{proof}

To state Babbitt's decomposition theorem we need some more notation from difference algebra. An extension $L/K$ of $\s$-fields is \emph{$\s$-separable} if $\s\colon L\otimes_K K'\to L\otimes_K K',\ a\otimes b\mapsto \s(a)\otimes\s(b)$ is injective for any $\s$-field extension $K'/K$. For other equivalent characterizations of this notion see \cite[Section 1.1]{TomasivWibmer:StronglyEtaleDifferenceAlgebras}.

Let $L/K$ be a finitely $\s$-generated Galois extension of $\s$-fields and let $\pi_0^\s(L/K)$ denote the union of all intermediate $\s$-fields $M$ of $L/K$ such that $M$ is a finite field extension of $K$ and $M/K$ is $\s$-separable. Then $\pi_0^\s(L/K)/K$ is a finite field extension (\cite[Remarkark~1.27]{TomasivWibmer:StronglyEtaleDifferenceAlgebras}) and Galois (\cite[Corollary 1.35]{TomasivWibmer:StronglyEtaleDifferenceAlgebras}). It follows that $\pi_0^\s(L/K)$ is the largest intermediate $\s$-field of $L/K$ with the property that $\pi_0^\s(L/K)/K$ is finite Galois and $\s$\=/separable.

A Galois extension $L/K$ of $\s$-fields is \emph{benign} if there exists an intermediate field $K\subseteq M\subseteq L$ such that 
\begin{itemize}
	\item $M/K$ is a finite Galois extension with $L=K\langle M\rangle$,
	\item the degree of $K(\s^n(M))$ over $K$ equals the degree of $M$ over $K$ for all $n\in\nn$ and
	\item the fields $(K(\s^n(M)))_{n\in\nn}$ are linearly disjoint over $K$.
\end{itemize}
An extension $L/K$ of $\s$-fields is \emph{$\s$-radicial} if for every $a\in L$ there exists an $n\in\nn$ such that $\s^n(a)\in K$. 

Now we are prepared to state Babbitt's decomposition theorem. The version we state here is from \cite[Theorem 2.9]{TomasivWibmer:StronglyEtaleDifferenceAlgebras} and in contrast to the references given at the beginning of this section does not require $\s\colon K\to K$ to be an automorphism.

\begin{theo}[Babbitt's decomposition theorem] \label{theo: Babbitts decomposition}
	Let $L/K$ be a finitely $\s$-generated Galois extension of $\s$-fields. Then there exists a chain
	$$K\subseteq L_1\subseteq L_2\subseteq\ldots\subseteq L_n\subseteq L$$
	of intermediate $\s$-fields
	such that $L_1=\pi_0^\s(L/K)$, $L_{i+1}/L_{i}$ is benign for $i=1,\ldots,n-1$ and $L/L_n$ is $\s$-radicial.
\end{theo}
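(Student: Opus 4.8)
The plan is to translate the statement completely into the language of expansive profinite groups via the difference Galois correspondence, apply the main decomposition theorem (Theorem \ref{theo: main}) to the Galois group, and then read the resulting subgroup series back as a chain of intermediate $\s$-fields. First, since $L/K$ is finitely $\s$-generated and Galois, Lemma \ref{lemma: expansive structure on Galois group} makes $G=G(L/K)$ an expansive profinite group. The (possibly infinite) Galois correspondence identifies intermediate $\s$-fields of $L/K$ with expansive subgroups of $G$ in an inclusion-reversing fashion, with normal inclusions of subgroups corresponding to relatively Galois subextensions. Applying Theorem \ref{theo: main} yields a subnormal series $G=G_0\supseteq G_1\supseteq\cdots\supseteq G_n$ with $G_1=G^\sc$, with each quotient $G_i/G_{i+1}$ ($i=1,\ldots,n-1$) a full one-sided group shift on a finite simple group, and with $G_n$ being $\s$-infinitesimal. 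Setting $L_i=L^{G_i}$ then produces a chain $K\subseteq L_1\subseteq\cdots\subseteq L_n\subseteq L$ of intermediate $\s$-fields in which $L_{i+1}/L_i$ is Galois with group $G_i/G_{i+1}$, and it remains to match the three defining conditions of Babbitt's decomposition against the three structural properties of this series.

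For the top piece I would show $L_1=L^{G^\sc}=\pi_0^\s(L/K)$. This rests on the dictionary fact that a finite Galois subextension $M/K$ is $\s$-separable exactly when $\s$ acts bijectively on $G(M/K)$ (which is the group-theoretic translation of $\s$-separability, established through the characterizations of \cite{TomasivWibmer:StronglyEtaleDifferenceAlgebras}). Granting this, $\pi_0^\s(L/K)$ is the fixed field of the smallest normal expansive subgroup $N$ with $G/N$ finite and $\s\colon G/N\to G/N$ bijective; Lemma \ref{lemma: sconnected component}(ii) shows $G^\sc$ has these properties, while Lemma \ref{lemma: sconnected converse} shows $G^\sc$ is contained in every such $N$, so $G^\sc$ is the smallest, whence $L^{G^\sc}$ is the largest such field, namely $\pi_0^\s(L/K)$.

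For the bottom piece I would show $L/L_n$ is $\s$-radicial. Since $G_n=G(L/L_n)$ is $\s$-infinitesimal, Lemma \ref{lemma: sinfinitesimal} provides an $N\in\N$ with $\s^N(g)=1$ for every $g\in G_n$. The defining relation of Lemma \ref{lemma: expansive structure on Galois group}(i), namely $g\circ\s=\s\circ\s(g)$ as maps $L\to L$, iterates to $g\circ\s^N=\s^N\circ\s^N(g)$, so for every $a\in L$ and every $g\in G_n$ one gets $g(\s^N(a))=\s^N(\s^N(g)(a))=\s^N(a)$, because $\s^N(g)$ is the identity automorphism. Thus $\s^N(a)$ is fixed by all of $G_n$, i.e. $\s^N(a)\in L_n$, which is exactly $\s$-radiciality.

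The principal obstacle is the middle condition: showing each $L_{i+1}/L_i$ is benign precisely because $G_i/G_{i+1}\cong\G_i^\N$ is a full one-sided group shift. Here I would take $M$ to be the fixed field inside $L_{i+1}$ of the subgroup $\{(g_0,g_1,\ldots)\in\G_i^\N\mid g_0=1\}$, so that $M/L_i$ is finite Galois with group $\G_i$ and $L_{i+1}=L_i\langle M\rangle$. The direct-product structure $\G_i^\N=\G_i\times\G_i\times\cdots$, under which the shift $\s$ carries the $n$-th factor to the $(n-1)$-st, must then be recognized, through the Galois correspondence, as the statement that the translates $L_i(\s^n(M))$ each have degree $[M:L_i]$ over $L_i$ and are linearly disjoint over $L_i$ — which are exactly the remaining two requirements in the definition of benign. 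Pinning down this equivalence between ``full one-sided group shift'' and ``benign'' cleanly, that is, reading linear disjointness of the translates as the full-shift (unrestricted direct product) structure on the relative Galois group, is the technical heart of the argument; once it is in place, all the other verifications are routine passages through the correspondence.
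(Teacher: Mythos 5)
Your proposal follows the paper's own route exactly: the paper proves this statement (in its strengthened form, Theorem \ref{theo: Babbitt new}) by making $G=G(L/K)$ expansive via Lemma \ref{lemma: expansive structure on Galois group}, applying Theorem \ref{theo: main}, taking fixed fields $L_i=L^{G_i}$, and translating the three conditions through Lemmas \ref{lemma: Galois correspondence} and \ref{lemma: properties correspond} --- including the same identification of $\pi_0^\s(L/K)$ with $L^{G^\sc}$ and the same fixed-field construction of $M$ (as the fixed field of $1\times\G_i\times\G_i\times\ldots$) for the benign/full-shift equivalence. The ``technical heart'' you flag is precisely Lemma \ref{lemma: properties correspond}(iii) of the paper, whose relevant direction is verified there along the lines you sketch (degrees and linear disjointness of the translates $L_i(\s^n(M))$ read off from the subgroups $\s^{-n}(U)$).
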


To deduce Theorem \ref{theo: Babbitts decomposition} from Theorem \ref{theo: main} we need to know how properties of expansive profinite groups correspond to properties of $\s$-field extensions. This is explained in the following lemma.

\begin{lemma} \label{lemma: properties correspond}
	Let $L/K$ be a finitely $\s$-generated Galois extension of $\s$-fields and let $G$ be the Galois group of $L/K$, considered as an expansive profinite group as in Lemma \ref{lemma: expansive structure on Galois group}.
	\begin{enumerate}
		\item $L/K$ is $\s$-separable if and only if $\s\colon G\to G$ is surjective.
		\item $L/K$ is finite and $\s$-separable if and only if $G$ is finite and $\s\colon G\to G$ is bijective.
		\item $L/K$ is benign if and only if $G$ is isomorphic to a full one-sided group shift.
		\item $L/K$ is $\s$-radicial if and only if $G$ is $\s$-infinitesimal.
	\end{enumerate}
\end{lemma}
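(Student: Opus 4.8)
The plan is to build a $\s$\-equivariant Galois correspondence and then read off each equivalence from it. Writing $H\mapsto L^H$ and $M\mapsto G(L/M)$ for the usual correspondence between closed subgroups of $G$ and intermediate fields of $L/K$, the one computation that makes everything work is $G(L/\s(M))=\s^{-1}(G(L/M))$: for $g\in G$ and $m\in M$ the defining relation $\s g^\s=g\s$ from Lemma \ref{lemma: expansive structure on Galois group} gives $g(\s(m))=\s(g^\s(m))$, so $g$ fixes $\s(M)$ iff $g^\s=\s(g)$ fixes $M$, i.e. iff $g\in\s^{-1}(G(L/M))$. Consequently $\s(M)\subseteq M\iff \s^{-1}(H)\supseteq H\iff \s(H)\subseteq H$, so intermediate $\s$\-fields correspond exactly to closed $\s$\-stable (expansive) subgroups; moreover $M/K$ is Galois iff $H\trianglelefteq G$, and the restriction map $G\to G(M/K)$ is then a morphism of expansive profinite groups inducing $G/H\simeq G(M/K)$ (a direct check that restriction commutes with $\s$).

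With this dictionary, parts (iii) and (iv) are structural. For (iv), iterating the defining relation gives $g(\s^n(a))=\s^n(\s^n(g)(a))$, so (by injectivity of $\s$ on $L$) $\s^n(a)\in K$ holds iff $\s^n(g)$ fixes $a$ for every $g\in G$. If $G$ is $\s$\-infinitesimal then by Lemma \ref{lemma: sinfinitesimal} it is finite with $\s^N\equiv 1$, whence $\s^N(a)\in K$ for all $a$ and $L/K$ is $\s$\-radicial; conversely a finitely $\s$\-generated $\s$\-radicial extension is finite, say $L=K(a_1,\ldots,a_r)$ with $\s^N(a_i)\in K$ for a common $N$, so $\s^N(g)$ fixes every $a_i$ and hence $\s^N(g)=1$ for all $g$, i.e. $G$ is $\s$\-infinitesimal. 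For (iii), given a benign witness $M$ put $U=G(L/M)$, so $G/U\simeq G(M/K)=:\G$; via $G(L/\s^nM)=\s^{-n}(U)$ the equal\-degree and linear\-disjointness conditions translate precisely into $[G:\bigcap_{i=0}^n\s^{-i}(U)]=|\G|^{n+1}$ for all $n$, while $L=K\langle M\rangle$ gives $\bigcap_n\s^{-n}(U)=1$. Thus the morphism $\f\colon G\to\G^\N,\ g\mapsto(\overline{\s^n(g)})_n$ of Lemma \ref{lemma: expansive profinite group is group shift} is injective with $|\f(G)[n]|=|\G|^{n+1}$, forcing $\f(G)=\G^\N$; the converse is the same computation starting from $U=\{g\in\G^\N:g_0=1\}$.

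For (i) I would first reduce surjectivity of $\s$ to a concrete field condition. Since $\s(G)$ is a closed subgroup (continuous image of a compact group), $\s$ is surjective iff $L^{\s(G)}=K$; and $a\in L^{\s(G)}$ iff $g^\s(a)=a$ for all $g$ iff $g(\s(a))=\s(a)$ for all $g$ iff $\s(a)\in K$. Hence $\s\colon G\to G$ is surjective iff $\{a\in L:\ \s(a)\in K\}=K$. It then remains to identify this with $\s$\-separability, i.e. to prove $L/K$ is $\s$\-separable iff $\{a\in L:\s(a)\in K\}=K$, which I would obtain from the characterizations of $\s$\-separability in \cite{TomasivWibmer:StronglyEtaleDifferenceAlgebras}. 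Granting (i), part (ii) is immediate: $L/K$ is finite iff $G$ is finite (as $|G|=[L:K]$), and for a finite group surjectivity of $\s$ is the same as bijectivity.

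I expect part (i) to be the main obstacle. The Galois dictionary and parts (iii) and (iv) are essentially formal once the relation $G(L/\s M)=\s^{-1}(G(L/M))$ is established, but passing from the tensor\-product definition of $\s$\-separability to the group\-theoretic condition (surjectivity of $\s$ on $G$, equivalently $\{a:\s(a)\in K\}=K$) is where genuine difference\-algebra input is needed, and I would lean on \cite{TomasivWibmer:StronglyEtaleDifferenceAlgebras} rather than attempt a self\-contained argument there.
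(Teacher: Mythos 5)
Your proposal is correct, and for parts (ii)--(iv) it is essentially the paper's own argument: the dictionary $G(L/\s(M))=\s^{-1}(G(L/M))$, derived from $\s g^\s=g\s$ and injectivity of $\s$ on $L$, is exactly what the paper isolates as Lemma \ref{lemma: Galois correspondence} (proved there \emph{after} this lemma, with the same computation, and with no circularity in your reordering); your index identity $[G:\bigcap_{i=0}^n\s^{-i}(U)]=|\G|^{n+1}$ for (iii) and the iterated relation $g(\s^n(a))=\s^n(\s^n(g)(a))$ for (iv) match the paper's proofs step for step (your detour through finiteness of a finitely $\s$-generated $\s$-radicial extension is a harmless one-line observation the paper avoids by noting $\s^n(A)\subseteq K$ forces $\s^n(L)\subseteq K$). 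Where you genuinely diverge is (i). The paper works through finite quotients in both directions: $\s$-separability, via the linear-independence characterization of \cite[Proposition 1.2]{TomasivWibmer:StronglyEtaleDifferenceAlgebras}, gives $[K(\s(A)):K]=[K(A):K]$, hence surjectivity of the injective map $G/\s^{-1}(U_A)\to G/U_A$ for every finite $A$, hence surjectivity of $\s$; conversely, surjectivity lets one write the conjugates of $a$ as $g_i=\s(h_i)$, so $\s(g_i(a))=h_i(\s(a))$ are conjugates of $\s(a)$, giving degree preservation and, with the primitive element theorem, $\s$-separability. Your reduction of surjectivity to the single field condition $\{a\in L:\ \s(a)\in K\}=K$, via $L^{\s(G)}$ and closedness of $\s(G)$, is correct and cleaner.

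The one genuine hole is the step you yourself flagged: the equivalence ``$L/K$ is $\s$-separable iff $\{a\in L:\ \s(a)\in K\}=K$'' is not literally among the cited characterizations, so it must be closed rather than outsourced. It does close, with one classical fact. By \cite[Proposition 1.2]{TomasivWibmer:StronglyEtaleDifferenceAlgebras}, $\s$-separability means $\s$ preserves $K$-linear independence, which (as $\s$ is an isomorphism onto its image) is a trivial reformulation of: $\s(L)$ and $K$ are linearly disjoint over $\s(K)$ inside $L$. One direction of your equivalence is then immediate: if $a\notin K$ but $\s(a)\in K$, then $1,a$ are $K$-linearly independent while $\s(1),\s(a)$ are not. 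For the converse, observe that your condition is equivalent to $\s(L)\cap K=\s(K)$ (the inclusion $\s(K)\subseteq\s(L)\cap K$ being automatic), and that $\s(L)/\s(K)$ is Galois, being the isomorphic image of $L/K$ under $\s$; now apply the standard theorem that a Galois extension $E/F$ is linearly disjoint from an arbitrary extension $F'/F$ in a common field if and only if $E\cap F'=F$, with $E=\s(L)$, $F=\s(K)$, $F'=K$. With this inserted, your proof of (i) is complete and arguably tidier than the paper's: the paper's version stays self-contained at the level of finite quotients and the primitive element theorem, while yours isolates the difference-algebra input in a single linear-disjointness statement and lets infinite Galois theory do the rest.
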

\begin{proof}
	We begin with (i). By \cite[Proposition 1.2]{TomasivWibmer:StronglyEtaleDifferenceAlgebras} the $\s$-field extension $L/K$ is $\s$\=/separable if and only if whenever $f_1,\ldots,f_n\in L$ are $K$-linearly independent then also $\s(f_1),\ldots,\s(f_n)$ are $K$-linearly independent.
	
	Assume that $L/K$ is $\s$-separable. To show that $\s\colon G\to G$ is surjective, it suffices to show that $G\xrightarrow{\s} G\to G/U$ is surjective for every normal open subgroup $U$ of $G$. With notation as in the proof of Lemma \ref{lemma: expansive structure on Galois group}, we have $U=U_A$ for some finite subset $A$ of $L$. So $G/U$ can be identified with the Galois group of $K(A)/K$. As $\s^{-1}(U_A)=U_{\s(A)}$, we see that $G/\s^{-1}(U)$ can be identified with the Galois group of $K(\s(A))/K$. Because $L/K$ is $\s$-separable, we see that $K(A)$ and $K(\s(A))$ have the same degree over $K$. Since the map $G/\s^{-1}(U)\to G/U$ induced by $\s\colon G\to G$ is injective it must then be surjective. Therefore $\s\colon G\to G$ is surjective.
	
	Conversely, assume that $\s\colon G\to G$ is surjective. By the primitive element theorem it suffices to show that for any element $a\in L$ such that $K(a)/K$ is Galois, the fields $K(a)$ and $K(\s(a))$ have the same degree over $K$. Let $n$ be the degree of $K(a)/K$. Then $a$ has $n$ conjugates $g_1(a),\ldots,g_n(a)\in L$. We have to show that $\s(g_1(a)),\ldots,\s(g_n(a))\in L$ are also conjugate over $K$. Because $\s\colon G\to G$ is surjective we may write $g_i=\s(h_i)$ for some $h_i\in G$. Then $\s(g_i(a))=\s(\s(h_i)(a))=h_i(\s(a))$ and therefore these elements are conjugate over $K$.
	
	Point (ii) follows from (i), because for $G$ finite, $\s\colon G\to G$ is surjective if and only if it is bijective.

We next prove (iii). Assume that $L/K$ is benign and let $K\subseteq M\subseteq L$ be a finite Galois extension of $K$ such that $L=K\langle M\rangle$, the degree of $K(\s^n(M))$ over $K$ equals the degree of $M$ over $K$ for all $n\in\nn$ and the fields $(K(\s^n(M)))_{n\in\nn}$ are linearly disjoint over $K$. Let $A\subseteq M$ be finite such that $M=K(A)$. Set $U=U_A$. Then $G/U$ can be identified with the Galois group of $M/K$. More generally, as $\s^{-n}(U_A)=U_{\s^n(A)}$, we see that $G/\s^{-n}(U)$ can be identified with the Galois group of $K(\s^n(A))=K(\s^n(M))$ over $K$. As $L=K\langle M\rangle$ and the fields $(K(\s^n(M)))_{n\in\nn}$ are linearly disjoint over $K$, the canonical map $G\to \prod_{n\in\nn}G/\s^{-n}(U)$ is an isomorphism of profinite groups. As $M$ and $K(\s^n(M))$ have the same degree over $K$ for all $n\in\nn$, the map $\s_{n+1}\colon G/\s^{-(n+1)}(U)\to G/\s^{-n}(U)$ induced by $\s\colon G\to G$ is an isomorphism. If we define $$\s\colon \prod_{n\in\nn}G/\s^{-n}(U)\to \prod_{n\in\nn}G/\s^{-n}(U),\ (g_n)_{n\in\nn}\mapsto (\s_{n+1}(g_{n+1}))_{n\in\nn},$$ then  $G\to \prod_{n\in\nn}G/\s^{-n}(U)$ becomes an isomorphism of expansive profinite groups. But $\prod_{n\in\nn}G/\s^{-n}(U)$ is isomorphic to the full one-sided group shift on $G/U$.

Conversely, assume that $G$ is isomorphic to the full one-sided group shift on a finite group $\G$. Let $U$ be the open normal subgroup of $G$ corresponding to $1\times\G\times\G\times\ldots\leq \G^\nn$ and define $M$ as $L^U$, so $M/K$ has Galois group $G/U=\G$. 
Then $M$ has all the required properties.

Finally, we prove (iv). Assume that $L/K$ is $\s$-radicial. Let $A$ be finite subset of $L$ such that $L=K\langle A\rangle$. As $L/K$ is $\s$-radicial there exists an $n\in\nn$ such that $\s^n(A)\in K$. But then in fact $\s^n(L)\subseteq K$. For $a\in L$ and $g\in G$ we have $\s(\s(g)(a))=g(\s(a))$ and so inductively $\s^n(\s^n(g)(a))=g(\s^n(a)=\s^n(a)$ because $\s^n(a)\in K$. The injectivity of $\s\colon L\to L$ implies $\s^n(g)(a)=a$ for all $a\in L$, i.e., $\s^n(g)=1
$.

Conversely, assume that $G$ is $\s$-infinitesimal. By Lemma \ref{lemma: sinfinitesimal} there exists an integer $n\in\nn$ such that $\s^n(g)=1$ for all $g\in G$. Then $g(\s^n(a))=\s^n(\s^n(g)(a))=\s^n(a)$ for all $g\in G$ and $a\in L$. Thus $\s^n(a)\in K$.
\end{proof}

The following lemma is a ``one-sided'' version of \cite[Theorem 8.1.1]{Levin}.
 \begin{lemma} \label{lemma: Galois correspondence}
 	Let $L/K$ be a finitely $\s$-generated Galois extension of $\s$-fields and consider the Galois group $G=G(L/K)$ of $L/K$ as an expansive profinite group as in Lemma~\ref{lemma: expansive structure on Galois group}. 
 	\begin{enumerate}
 		\item The maps $M\mapsto G(L/M)$ and $H\mapsto L^H$ define a bijection between the intermediate $\s$-fields of $L/K$ and the expansive subgroups of $G$.
 		\item  If $M$ and $H$ correspond to each other in {\rm (i)}, then $M/K$ is Galois if and only if $H$ is normal in $G$ and in that case, $G(M/K)$ and $G/H$ are isomorphic as expansive profinite groups. 
 		\item The expansive subgroup $G^{\sc}$ of $G$ corresponds to the intermediate $\s$-field $\pi_0^\s(L/K)$.
 	\end{enumerate}
 \end{lemma}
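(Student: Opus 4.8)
The plan is to deduce everything from the ordinary (infinite) Galois correspondence for the profinite group $G=G(L/K)$ and then to verify that $\s$-stability matches up on both sides. The classical correspondence already furnishes a bijection between the closed subgroups $H$ of $G$ and the intermediate fields $M$ of $L/K$, via $M\mapsto G(L/M)$ and $H\mapsto L^H$; the only new content in (i) is that, under this bijection, $\s$-stable subgroups correspond to intermediate $\s$-fields. For this I would exploit the defining relation $\s g^\s=g\s$ of Lemma~\ref{lemma: expansive structure on Galois group}: for $g\in G$ and $a\in L$ it yields $\s(g^\s(a))=g(\s(a))$, and since $\s\colon L\to L$ is injective, $g^\s$ fixes $a$ precisely when $g$ fixes $\s(a)$. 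Hence, writing $H=G(L/M)$, one gets $\s^{-1}(H)=G(L/K(\s(M)))$, so that
$$\s(H)\subseteq H\iff H\subseteq\s^{-1}(H)\iff K(\s(M))\subseteq M\iff \s(M)\subseteq M,$$
using that the correspondence is order-reversing. Thus $H$ is $\s$-stable (hence an expansive subgroup, by Lemma~\ref{lemma: expansive subgroups and quotients}(i)) if and only if $M$ is an intermediate $\s$-field, and the classical bijection restricts to the one claimed in (i).

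For (ii) the normality equivalence and the isomorphism $G/H\simeq G(M/K)$ of profinite groups are classical; what remains is to see that this isomorphism respects $\s$. Here I would check that the restriction map $\rho\colon G\to G(M/K),\ g\mapsto g|_M$ (well defined since $g(M)=M$ for $M$ Galois over $K$, and $M$ is $\s$-stable by (i)) is a morphism of topological $\s$-groups: restricting $\s g^\s=g\s$ to the $\s$-stable field $M$ and invoking the uniqueness in the definition of $\s$ on $G(M/K)$ gives $(g^\s)|_M=(g|_M)^\s$. Since $\rho$ is surjective with kernel $H$ and $G/H$ is expansive (Lemma~\ref{lemma: expansive subgroups and quotients}(ii)), the induced isomorphism $G/H\simeq G(M/K)$ is one of expansive profinite groups.

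For (iii) I would combine the dictionary of Lemma~\ref{lemma: properties correspond}(ii) with the characterization of the $\s$-identity component. By that dictionary, an intermediate $\s$-field $M$ with $M/K$ finite Galois corresponds to a normal expansive subgroup $H$ with $G/H$ finite, and $M/K$ is moreover $\s$-separable exactly when $\s\colon G/H\to G/H$ is bijective. Now Lemma~\ref{lemma: sconnected component}(ii) shows that $G^\sc$ is itself a normal expansive subgroup with $G/G^\sc$ finite and $\s$ bijective, while Lemma~\ref{lemma: sconnected converse} shows that $G^\sc$ is contained in every such subgroup. Hence $G^\sc$ is the smallest member of this family, and under the order-reversing bijection of (i)--(ii) its fixed field $L^{G^\sc}$ is the largest intermediate $\s$-field that is finite Galois and $\s$-separable over $K$, that is, $\pi_0^\s(L/K)$.

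The main obstacle is the $\s$-compatibility computation underlying (i) and (ii); everything else is bookkeeping layered on top of the classical theory. The one point demanding care is that $\s\colon L\to L$ is only assumed injective, not surjective, so all arguments must be phrased through the inclusion $\s(M)\subseteq M$ and the generated field $K(\s(M))$ rather than through any putative preimage of $M$ under $\s$.
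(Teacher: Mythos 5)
Your proposal is correct and follows essentially the same route as the paper: both reduce (i) to the classical Galois correspondence and verify $\s$-stability on the two sides via the relation $\s(\s(g)(a))=g(\s(a))$ together with the injectivity of $\s\colon L\to L$, both prove (ii) by checking that restriction to $M$ commutes with $\s$, and both prove (iii) by combining the minimality characterization of $G^\sc$ (Lemma \ref{lemma: sconnected converse}) with the dictionary of Lemma \ref{lemma: properties correspond}. The only difference is cosmetic: you package the stability equivalence in (i) through the identity $\s^{-1}(H)=G(L/K(\s(M)))$ and order-reversal, where the paper argues element-by-element in both directions.
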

\begin{proof}
	By the Galois correspondence, the map $M\mapsto G(L/M)$ is a bijection between the set of all intermediate fields of $L/K$ and the set of closed subgroups of $G$ with inverse $H\mapsto L^H$. So, if $M$ corresponds to $H$, it suffices to show that $\s(M)\subseteq M$ if and only if $\s(H)\subseteq H$. First assume that $\s(M)\subseteq M$. Then, for $a\in M$ and $h\in H$ we have $\s(\s(h)(a))=h(\s(a))=\s(a)$ because $\s(a)\in M$. From the injectivity of $\s\colon L\to L$ it follows that $\s(h)(a)=a$ for all $a\in M$, i.e., $\s(h)\in H$. Conversely, assume that $\s(H)\subseteq H$ and $a\in M$. Then $\s(h)(a)=a$ and therefore $\s(a)=\s(\s(h)(a))=h(\s(a))$ for all $h\in H$. Thus $\s(a)\in M$.
	
	Part (ii) is clear from Galois theory. The only aspect that needs to be checked is that the restriction map $G\to G(M/K)$ commutes with $\s$, but this follows directly from the definition of the action of $\s$.
	
	We know from Lemma \ref{lemma: sconnected converse} that $G^\sc$ is the smallest normal expansive subgroup of $G$ such that $G/G^\sc$ is finite and $\s\colon G/G^\sc\to G/G^\sc$ is bijective. Thus, by Lemmas \ref{lemma: properties correspond} and \ref{lemma: Galois correspondence}, $L^{G^\sc}$ is the largest Galois extension of $K$ such that $L^{G^\sc}/K$ is finite and $\s$-separable. But this is exactly $\pi_0^\s(L/K)$.
\end{proof}

With these preparations at hand, it is now a straight forward matter to deduce Babbitt's decomposition theorem from Theorem \ref{theo: main}. 

\begin{theo} \label{theo: Babbitt new}
	Let $L/K$ be a finitely $\s$-generated Galois extension of $\s$-fields. Then there exists a chain
	$$K\subseteq L_1\subseteq L_2\subseteq\ldots\subseteq L_n\subseteq L$$
	of intermediate $\s$-fields
	such that $L_1=\pi_0^\s(L/K)$, $L_{i+1}/L_{i}$ is benign with Galois group isomorphic to a full one-sided group shift on a finite simple group $\G_i$ for $i=1,\ldots,n-1$ and $L/L_n$ is $\s$\=/radicial.
	Moreover, the length $n$ of such a chain and the isomorphism classes of the finite simple groups $\G_i$ are uniquely determined by $L/K$.
\end{theo}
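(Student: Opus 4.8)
The plan is to transport the entire statement through the Galois correspondence of Lemma~\ref{lemma: Galois correspondence}, reducing it to Theorem~\ref{theo: main} applied to the Galois group. Concretely, I would regard $G=G(L/K)$ as an expansive profinite group via Lemma~\ref{lemma: expansive structure on Galois group} and invoke Theorem~\ref{theo: main} to produce a subnormal series
$$G\supseteq G_1\supseteq G_2\supseteq\cdots\supseteq G_n$$
with $G_1=G^\sc$, each quotient $G_i/G_{i+1}$ isomorphic to a full one-sided group shift on a finite simple group $\G_i$ (for $i=1,\ldots,n-1$), and $G_n$ $\s$-infinitesimal. Setting $L_i=L^{G_i}$ reverses inclusions and yields a chain $K\subseteq L_1\subseteq\cdots\subseteq L_n\subseteq L$ of intermediate $\s$-fields by Lemma~\ref{lemma: Galois correspondence}~(i). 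The identification $L_1=\pi_0^\s(L/K)$ is immediate from Lemma~\ref{lemma: Galois correspondence}~(iii).

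For the intermediate steps I would apply the correspondence relative to the base $L_i$: since $L/K$ is finitely $\s$-generated and $K\subseteq L_i$, the extension $L/L_i$ is again finitely $\s$-generated Galois with Galois group $G_i$. As $G_{i+1}$ is normal in $G_i$, Lemma~\ref{lemma: Galois correspondence}~(ii) shows $L_{i+1}/L_i$ is Galois with group $G_i/G_{i+1}$, which is a full one-sided group shift on $\G_i$; this group is expansive, so $L_{i+1}/L_i$ is itself finitely $\s$-generated (Lemma~\ref{lemma: expansive structure on Galois group}~(iii)) and hence benign by Lemma~\ref{lemma: properties correspond}~(iii). Likewise $L/L_n$ is finitely $\s$-generated Galois with group $G_n$, and $G_n$ $\s$-infinitesimal translates, via Lemma~\ref{lemma: properties correspond}~(iv), into $L/L_n$ being $\s$-radicial. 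This establishes existence.

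For uniqueness I would run the dictionary in the opposite direction. Given a second chain $K\subseteq L_1'\subseteq\cdots\subseteq L_m'\subseteq L$ with the stated properties, set $G_i'=G(L/L_i')$. The reverse implications of Lemmas~\ref{lemma: properties correspond} and~\ref{lemma: Galois correspondence} turn this into a subnormal series $G\supseteq G_1'\supseteq\cdots\supseteq G_m'$ of $G$ with $G_1'=G^\sc$ (from $L_1'=\pi_0^\s(L/K)$), each $G_i'/G_{i+1}'$ a full one-sided group shift on $\H_i$, and $G_m'$ $\s$-infinitesimal. Thus both chains correspond to subnormal series of $G$ of exactly the form in Theorem~\ref{theo: main}. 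The uniqueness clause of Theorem~\ref{theo: main} then forces $m=n$ and the existence of a permutation $\pi$ with $\G_i\simeq\H_{\pi(i)}$.

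The work is essentially bookkeeping, and the only point requiring genuine care is the relative application of the correspondence: one must check that each $L_{i+1}/L_i$ (and $L/L_n$) remains finitely $\s$-generated Galois, so that Lemmas~\ref{lemma: expansive structure on Galois group} and~\ref{lemma: properties correspond} are applicable with $L_i$ as base field. I expect this finiteness check --- rather than any deep new argument --- to be the main thing to get right; everything else follows formally from the translation lemmas and the already-proved Theorem~\ref{theo: main}.
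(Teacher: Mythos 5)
Your proposal is correct and follows essentially the same route as the paper: pass to the Galois group via Lemma~\ref{lemma: expansive structure on Galois group}, apply Theorem~\ref{theo: main}, take fixed fields, and translate the group-theoretic properties back through Lemmas~\ref{lemma: Galois correspondence} and~\ref{lemma: properties correspond}, with uniqueness obtained by running the dictionary in reverse. Your explicit check that each $L_{i+1}/L_i$ and $L/L_n$ is again finitely $\s$-generated Galois (so that the translation lemmas apply over the base $L_i$) is a point the paper leaves implicit, and you resolve it correctly via Lemma~\ref{lemma: expansive structure on Galois group}~(iii).
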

\begin{proof}
	Let $G=G(L/K)$ denote the Galois group of $L/K$, considered as an expansive profinite group as in Lemma \ref{lemma: expansive structure on Galois group}. Let 
	$$ G\supseteq G_1\supseteq\ldots\supseteq G_n$$
	be a subnormal series as in Theorem \ref{theo: main}. For $i=1,\ldots,n$ set $L_i=L^{G_i}$.
	Then $$K\subseteq L_1\subseteq\ldots\subseteq L_n\subseteq L$$ is an ascending chain of intermediate $\s$-field that has the required properties by Lemmas~\ref{lemma: Galois correspondence} and \ref{lemma: properties correspond}.

	If we have another chain
	 $$K\subseteq L'_1\subseteq\ldots\subseteq L'_{n'}\subseteq L$$ as in the theorem, then setting $G_i'=G(L/L'_i)$ for $i=1,\ldots,n'$ yields a subnormal series 
	 $$ G\supseteq G'_1\supseteq\ldots\supseteq G'_{n'}$$
	 as in Theorem \ref{theo: main}. The uniqueness part of Theorem \ref{theo: main} thus implies the claimed uniqueness statement.	
\end{proof}

\bibliographystyle{alpha}
\bibliography{bibdata}

\end{document}